\setlist{topsep=1pt}
\theoremstyle{definition}
\newtheorem{theorem}{Theorem}[section]
\newtheorem{definition}[theorem]{Definition}
\newtheorem{proposition}[theorem]{Proposition}
\newtheorem{corollary}[theorem]{Corollary}
\newtheorem{lemma}[theorem]{Lemma}
\newtheorem{fact[theorem]}{Fact}
\newtheorem{remark}[theorem]{Remark}
\newtheorem{example}[theorem]{Example}
\newtheorem{assumption}[theorem]{Assumption}
\newtheorem{fact}[theorem]{Fact}
\title{\Tema\thanks{FJAA, RC and CLP were partially supported by Grant PID2022-136399NB-C21 funded by ERDF/EU and by MICIU/AEI/10.13039/501100011033. The research of HHB was supported by the Natural Sciences and Engineering Research Council of Canada. CLP was supported by Grant PREP2022-000118 funded by MICIU/AEI/10.13039/501100011033 and by ``ESF Investing in your future''.}}
\author{Francisco J. Arag\'on-Artacho\thanks{Department of Mathematics, University of Alicante, \textsc{Spain}. E-mail:~\href{mailto:francisco.aragon@ua.es}{francisco.aragon@ua.es}.}
        \and Heinz H. Bauschke\thanks{Mathematics, University of British Columbia, Kelowna, \textsc{Canada}. E-mail:~\href{mailto:heinz.bauschke@ubc.ca}{heinz.bauschke@ubc.ca}.}
        \and Rub\'en Campoy\thanks{Department of Mathematics, University of Alicante, \textsc{Spain}. E-mail:~\href{mailto:ruben.campoy@ua.es}{ruben.campoy@ua.es}.}
        \and C\'esar L\'opez-Pastor\thanks{Department of Mathematics, University of Alicante, \textsc{Spain}. E-mail:~\href{mailto:cesar.lopez@ua.es}{cesar.lopez@ua.es}.}
}
\date{}
\newcommand{\Tema}{Graph splitting methods: \\ Fixed points and strong convergence for linear subspaces}
\DeclareMathAlphabet\mbc{OMS}{cmsy}{b}{n}
\newcommand{\zer}{\operatorname{zer}}
\newcommand{\Fix}{\operatorname{Fix}}
\newcommand{\ran}{\operatorname{ran}}
\newcommand{\spa}{\operatorname{span}}
\newcommand{\rk}{\operatorname{rank}}
\newcommand{\Id}{\operatorname{Id}}
\newcommand{\Diag}{\operatorname{Diag}}
\newcommand{\Lap}{\operatorname{Lap}}
\newcommand{\Inc}{\operatorname{Inc}}
\newcommand{\bs}{\boldsymbol}
\newcommand{\Hi}{\mathcal{H}}
\newcommand{\R}{\mathbb{R}}
\newcommand{\N}{\mathbb{N}}
\newcommand{\E}{\mathcal{E}}
\newcommand{\Z}{ Z}
\newcommand{\tto}{\rightrightarrows}
\newcommand{\ot}[1]{#1_{\scaleto{\otimes}{4pt}}}
\newcommand{\norm}[1]{\left\lVert #1\right\rVert}
\newcommand{\crefpart}[2]{%
  \hyperref[#2]{\namecref{#1}~\labelcref*{#1}~\ref*{#2}}%
}
\crefname{equation}{}{}
\crefname{enumi}{}{}
\let\epsilon\varepsilon
\apptocmd{\thebibliography}{\setlength{\itemsep}{1.9pt}}{}{}
\begin{document}

\maketitle

\begin{abstract}
    In this paper, we develop a general analysis for the fixed points of the operators defining the graph splitting methods from [\emph{SIAM J. Optim.}, 34 (2024), pp. 1569--1594] by Bredies, Chenchene and Naldi. We particularize it to the case where the maximally monotone operators are normal cones of closed linear subspaces and provide an explicit formula for the limit points of the graph splitting schemes. We exemplify these results on some particular algorithms, unifying in this way some results previously derived as well as obtaining new ones.
\end{abstract}

\section{Introduction}\label{sec: 1}

In this work, we focus on inclusion problems of the form
\begin{equation}\tag{P}\label{eq: P}
    \text{find }x\in\Hi\text{ such that }0\in\sum_{i=1}^n A_i(x),
\end{equation}
where $A_1,\ldots,A_n:\Hi\tto\Hi$ are set-valued maximally monotone operators and $\Hi$ is a Hilbert space.
This inclusion problem naturally arises in convex optimization when minimizing the sum of proper lower semicontinuous convex functions $f_i:\Hi\to(-\infty,+\infty]$, $i=1,\ldots,n$. Indeed, under standard constraint qualifications (see , e.g., \cite[Corollary 16.50]{bauschke}), the optimality condition for
\[
\min_{x\in\Hi}\; f_1(x)+\cdots+f_n(x)
\]
reduces to $0\in\sum_{i=1}^n \partial f_i(x)$, which is a particular instance of \cref{eq: P} with $A_i=\partial f_i$.

Provided that $\sum_{i=1}^n A_i$ is maximally monotone, the \emph{proximal point algorithm} can be implemented (see, e.g.,~\cite[Section 2.1]{RockaProx}), which transforms the inclusion problem into an equivalent fixed point problem $x=T(x)$, where $T$ is the resolvent of $\sum_{i=1}^n A_i$. However, this approach requires the calculation of its resolvent, which might be as difficult as solving the initial inclusion problem~\cref{eq: P}. For this reason, \emph{splitting methods} present a suitable alternative to this methodology: an algorithmic procedure where the resolvent of each operator $A_i$ is computed separately.

The first splitting method developed for finding a zero in the sum of $n=2$ maximally monotone operators was the celebrated \emph{Douglas--Rachford splitting}, developed by Lions and Mercier in~\cite{LM79}. Forty years later, Ryu proposed in~\cite{ryu20} the first splitting method able to handle $n=3$ operators without relying on a product space reformulation. The algorithm has the following structure:
\begin{align*}[left=\empheqlbrace]
x_1^{k+1}&=J_{A_1}(v_1^k),\\
x_2^{k+1}&=J_{A_2}(x_1^{k+1}+v_2^k),\\
x_3^{k+1}&=J_{A_3}(x_1^{k+1}+x_2^{k+1}-v_1^k-v_2^k),\\
v_1^{k+1}&=v_1^k+\theta(x_3-x_1),\\
v_2^{k+1}&=v_2^k+\theta(x_3-x_2),
\end{align*}
for some starting points $v_1^0,v_2^0\in\Hi$ and some relaxation parameter $\theta\in(0,1)$. This method, as well as Douglas--Rachford, is \emph{frugal}, meaning that it computes the resolvent $J_{A_i}$ of each operator $A_i$ only once per iteration.

Observe that five variables are used in Ryu's splitting method, of which three variables, $x_i$, store each computation of $J_{A_i}$, and two extra variables, $v_j$, are required for calculating the aforementioned resolvents. The former are called \emph{shadow variables} (or \emph{resolvent variables}) and the latter \emph{governing variables}. The number of governing variables in the splitting method is referred as \emph{lifting}. For $n=3$ operators, Ryu proved that the minimal number of governing variables for a frugal splitting method is $2$, giving rise to the notion of \emph{minimal lifting}. In subsequent years, several frugal splitting methods with minimal lifting arose to solve general inclusion problems for $n\geq 3$. Malitsky and Tam generalized Ryu's result in~\cite{malitsky2023resolvent}, showing that the minimal lifting is $n-1$. They also introduced a splitting algorithm with minimal lifting, described as follows:
\begin{align*}[left=\empheqlbrace]
x_1^{k+1} &=  J_{A_1}(v_1^k),\\
x_i^{k+1} &=   J_{A_i}( x_{i-1}^{k+1}+ v_i^k -v_{i-1}^k), \quad i= 2,\ldots, n-1,\\
x_n^{k+1} &=  J_{A_n}( x_{1}^{k+1}+x_{n-1}^{k+1}-v_{n-1}^k ),\\
v_i^{k+1} &=   v_i^k + \theta (x_{i+1}^{k+1}-x_{i}^{k+1}), \qquad i= 1,\ldots, n-1,
\end{align*}
where $\theta\in(0,1)$.

All these algorithms share a similar structure: inside each resolvent, a linear combination of shadow and governing variables
is computed. Not only that, but the reliance between their variables can be depicted by a directed graph and a subgraph. Indeed, let $\{1,\ldots,n\}$ be the nodes of the graph, representing the shadow
variables $x_i$, and let $(i,j)$ be an edge if $x_i$ is used as an input of the resolvent of~$A_j$. The involvement of the governing variables in the computation of the shadow variables can be modeled
using the subgraph. When the subgraph is a tree (which has $n-1$ edges), we can associate each edge to a governing variable. An incident edge to some node $i$ in the subgraph implies that the corresponding governing variable is used to update the shadow variable $x_i$.

Let us illustrate in \cref{fig: Ryu} the graph and subgraph defining Ryu's splitting method. The graph has edges $(1,3)$ and $(2,3)$ because the shadow variables $x_1$ and $x_2$ are used in the computation of $x_3$, while the edge $(1,2)$ shows that $x_1$ is used in the computation of $x_2$. On the other hand, since the governing variable $v_1$ is used in the computation of $x_1$ and $x_3$, the edge $(1,3)$ belongs to the subgraph, which we associate with $v_1$. Likewise, variable $v_2$ is used for updating $x_2$ and $x_3$, so we associate it with the edge $(2,3)$ in the subgraph.

\begin{figure}[h!]
    \centering
\begin{subfigure}{0.49\textwidth}
    \begin{center}
        \begin{tikzcd}[cells={nodes={draw=black, circle}}]
            1  \arrow[r] \arrow[rr, bend left] & 2 \arrow[r]  & 3
        \end{tikzcd}
    \end{center}
    \end{subfigure}
    \begin{subfigure}{0.49\textwidth}
    \begin{center}
        \begin{tikzcd}[cells={nodes={draw=black, circle}}]
            1  \arrow[rr, bend left, "v_1"] & 2 \arrow[r, "v_2"'] & 3
        \end{tikzcd}
    \end{center}
    \end{subfigure}

    \caption{Graph (left) and subgraph (right) defining Ryu's splitting}
    \label{fig: Ryu}
    \end{figure}

The key development of Bredies, Chenchene and Naldi in~\cite{graph-drs} was showing the reverse implication: each pair of algorithmic graph and connected spanning subgraph (see \cref{sec: 2.3}) yields a globally convergent frugal splitting algorithm with minimal lifting. This is valid even when the subgraph is not a tree, in which case the relationships between the governing and shadow variables are determined through an onto decomposition of the Laplacian matrix of the subgraph. In this way, their procedure unifies the convergence analysis in one general framework that encompasses all the presented schemes that were studied independently, and permits to derive new algorithms. For more details, in addition to~\cite{graph-drs}, we refer the reader to~\cite{graph-fb}, where the graph-based framework was extended to derive forward-backward algorithms for solving inclusion problems also involving cocoercive operators.

In the same spirit, in this work we use this graph-based algorithmic framework to unify the results provided in~\cite{bauschke-fixedpoints} and to obtain new ones, without the need of developing an ad hoc convergence analysis for each algorithm.  In~\cite{bauschke-fixedpoints}, the authors studied the fixed points of two operators that define the iterations of the (reduced and original) preconditioned proximal point algorithms of \cite{degenerate-ppp}. Different splitting algorithms, such as Douglas--Rachford, Ryu or Malitsky--Tam, can be derived as an instance of a preconditioned proximal point algorithm for an adequate construction of the operator and the preconditioner. Making use of this, exact expressions of the limit points of these algorithms were also obtained in \cite{bauschke-fixedpoints} for the case where the operators $A_i$ are normal cones of closed linear subspaces, proving also strong convergence. However, the analysis was carried out without the unified graph framework and, consequently, the results for each algorithm were obtained separately. The advantage of the techniques presented in this work lies in the achievement of general results based on the graph-based splitting framework, which encompass those obtained in~\cite{bauschke-fixedpoints}. In addition, this permits us to easily derive similar analyses for other algorithms, as the ones recently developed in \cite{graph-drs,graph-fb}.

The paper is organized as follows. In \cref{sec: 2}, we introduce the notation and recall some fundamental concepts. In \cref{sec: 3}, we provide an explicit expression for the operators that define the graph splitting methods and their fixed points. We specialize in \cref{sec: 4} the previous analysis when the operators $A_i$ are normal cones of closed linear subspaces, and compute the projection onto the set of fixed points of the operators defining the graph splitting methods. In addition, we derive the limit points of the sequences generated by the algorithms in this specific linear case and provide examples of particular splitting methods using the tools developed. Lastly, the concluding \cref{sec: concl} summarizes the major achievements.

\section{Preliminaries}\label{sec: 2}

Throughout, we assume that $\Hi$ and $\mathcal{K}$ are real Hilbert spaces with inner products $\langle\cdot,\cdot\rangle$ and induced norms~$\norm{\cdot}$. Elements in the Cartesian products $\Hi^n$ and $\R^n$ are marked with bold, e.g., $\bm{x}=(x_1,\ldots,x_n)\in\Hi^n$. The space $\Hi^n$ is endowed with a Hilbert space structure as follows: if $\bm{x},\bm{y}\in\Hi^n$, then $\langle\bm{x},\bm{y}\rangle:=\sum_{i=1}^n\langle x_i,y_i\rangle$. We will assume that vectors of any kind are column vectors. Also, whenever it is important to emphasize the arrangement of vectors, they are denoted with brackets and, otherwise, they are denoted with parentheses.

\subsection{Operator Theory}\label{sec: 2.1}

A \emph{set-valued operator} is a mapping $A:\Hi\rightrightarrows \Hi$ that assigns to each point $x\in \Hi$ a subset $A(x)$ of $\Hi$. If $T$ is a mapping such that $T(x)$ is a singleton for all $x\in \Hi$, then $T$ is said to be a \emph{single-valued operator} and we denote it by $T:\Hi\to \Hi$. In this case, we will also write $T(x)=y$ instead of $T(x)=\{y\}$.

Given a set-valued operator $A:\Hi\rightrightarrows \Hi$, the \emph{domain}, the \emph{range}, the \emph{graph}, the \emph{fixed points} and the \emph{zeros} of $A$ are, respectively,
\begin{align*}
\operatorname{dom}A&:=\{x\in \Hi: A(x)\neq\varnothing\}, & \operatorname{ran}A&:=\{u\in \Hi: u\in A(x)\text{ for some }x\in \Hi\},\\
\operatorname{gra}A&:=\{(x,u)\in \Hi\times \Hi: u\in A(x)\},& \operatorname{fix}A&:=\{x\in \Hi: x\in A(x)\},\\
\operatorname{zer}A&:=\{x\in \Hi: 0\in A(x)\}.
\end{align*}
The \emph{inverse} is the set-valued operator $A^{-1}:\Hi\rightrightarrows \Hi$ such that $x\in A^{-1}(u)\Leftrightarrow u\in A(x)$.

\begin{definition}\label{def: monotone}
    We say that $A:\Hi\rightrightarrows \Hi$ is \emph{monotone} if
    $$\langle x-y,u-v\rangle\geq0\quad \forall (x,u),(y,v)\in \operatorname{gra}A.$$
    Further, $A$ is \emph{maximally monotone} if for all $A':\Hi\rightrightarrows \Hi$ monotone, $\operatorname{gra}A\subseteq\operatorname{gra}A'\Rightarrow A=A'$.
\end{definition}

In the context of splitting algorithms, the resolvent operator is central. It is defined as follows.

\begin{definition}\label{def: resolvent}
    Let $A:\Hi\rightrightarrows \Hi$. The \emph{resolvent} of $A$ is
    \[J_A:=(\Id_\Hi+A)^{-1}.\]
\end{definition}

Maximally monotone operators naturally arise in convex analysis. For instance, the \emph{subdifferential} operator of a proper, convex and lower semicontinuous function, $\partial f$, is maximally monotone and its resolvent is the so-called \emph{proximal mapping} (see \cite[Theorem~20.25 and Example~23.3]{bauschke}). Similarly, if $C\subset\Hi$ is a nonempty, closed and convex set, the \emph{normal cone} operator, $N_C$, is maximally monotone and its resolvent coincides with the \emph{metric projection} onto $C$, denoted by $P_C$ (see \cite[Examples 20.26 and 23.4]{bauschke}).

The following classical result due to Minty characterizes maximal monotonicity in terms of some properties of the resolvent.

\begin{fact}[\cite{minty}]\label{lem: single-valued full domain resolvent}
    Let $A:\Hi\rightrightarrows \Hi$ be monotone. Then:

    \begin{enumerate}[label=(\roman*)]
        \item $J_A$ is single-valued;
        \item $A$ is maximally monotone if and only if $\operatorname{dom} J_A=\Hi$.
    \end{enumerate}
\end{fact}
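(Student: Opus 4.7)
The plan is to handle the two parts separately, with (i) being a short direct argument from monotonicity, and (ii) reducing to Minty's classical range theorem (which is precisely the cited reference).

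For (i), I would take $y_1,y_2\in J_A(x)$, rewrite this as $x-y_i\in A(y_i)$ for $i=1,2$, and apply monotonicity of $A$ to the two pairs $(y_1,x-y_1)$ and $(y_2,x-y_2)$. This yields
\[
0\le\langle y_1-y_2,(x-y_1)-(x-y_2)\rangle=-\|y_1-y_2\|^2,
\]
forcing $y_1=y_2$. Hence each fibre $J_A(x)$ contains at most one point.

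For (ii), the key observation is that $\operatorname{dom}J_A=\operatorname{dom}(\Id+A)^{-1}=\operatorname{ran}(\Id+A)$, so the claim is exactly Minty's characterization of maximal monotonicity via surjectivity of $\Id+A$. The nontrivial implication is ($\Rightarrow$), for which I would simply invoke~\cite{minty}; no new argument is needed. The reverse direction ($\Leftarrow$) is elementary and worth recording: assuming $\operatorname{ran}(\Id+A)=\Hi$, pick any $(x_0,u_0)$ monotonically related to $\operatorname{gra}A$. Surjectivity yields $y\in\Hi$ with $x_0+u_0\in y+A(y)$, so $(y,x_0+u_0-y)\in\operatorname{gra}A$. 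Testing this point against $(x_0,u_0)$ gives
\[
0\le\langle y-x_0,(x_0+u_0-y)-u_0\rangle=-\|y-x_0\|^2,
\]
hence $y=x_0$ and $u_0\in A(x_0)$. Thus no proper monotone extension of $A$ exists, i.e., $A$ is maximally monotone.

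The main obstacle is really only notational/citational: the hard analytic content lives entirely in the ($\Rightarrow$) direction of~(ii), which is Minty's surjectivity theorem and is invoked rather than re-proved. Everything else is a two-line monotonicity computation of the same flavour as the uniqueness argument in~(i).
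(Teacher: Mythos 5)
Your proposal is correct: the paper states this result as a Fact with only a citation to Minty, and your argument fills in the standard elementary computations for (i) and for the easy direction of (ii) while rightly deferring the surjectivity implication to the cited reference. Both sign computations check out, so nothing further is needed.
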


Let us now turn our attention to single-valued operators. Let $T:\Hi\to\mathcal{K}$ be a continuous linear operator. We denote by $T^\ast$ the \emph{adjoint} of $T$, i.e., the continuous linear operator $T^\ast:\mathcal{K}\to \Hi$ such that $\langle Tx,y\rangle=\langle x,T^\ast y\rangle$ for all $x\in\Hi,y\in \mathcal{K}$.

\begin{definition}\label{def: self-adjoint orthogonal PSD}
    A continuous linear operator $T:\Hi\to \Hi$ is said to be \emph{self-adjoint} if $T=T^\ast$. In addition, $T$~is \emph{positive semidefinite} if $\langle Tx,x\rangle\geq0$ for all $x\in \Hi$.
\end{definition}

Most of the linear operators treated in this work have the form of a Kronecker product ${K}\otimes\Id_\Hi$, where $K\in\R^{m\times n}$ is a real matrix and $\Id_\Hi$ is the identity operator. To simplify, we will denote the continuous linear operator $\ot{K}:=K\otimes\Id_\Hi:\Hi^n\to\Hi^m$, which is defined for all $\bm{x}\in\Hi^n$ by
\begin{equation}\label{eq: KronMatrix}
\ot{K}\bm{x}:=\begin{bmatrix}
	K_{1,1}\Id_\mathcal{H} & \cdots & K_{1,n}\Id_\mathcal{H} \\
	 \vdots & \ddots & \vdots \\
	K_{m,1}\Id_\mathcal{H} & \cdots & K_{m,n}\Id_\mathcal{H}
\end{bmatrix}\begin{bmatrix}
	x_1 \\ \vdots \\ x_n
\end{bmatrix}=\begin{bmatrix}
	\sum_{j=1}^nK_{1,j}x_j \\ \vdots \\ \sum_{j=1}^nK_{m,j}x_j
\end{bmatrix}.
\end{equation}

Notice first that, denoting $I_n$ the identity matrix of order $n$, then $\ot{(I_n)}=\Id_{\Hi^n}$. On the other hand, recall that the Kronecker product respects matrix operations such as multiplication, transpose and inverse. With this notation, we obtain the equalities $\ot{(K_1)}\ot{(K_2)}=\ot{(K_1K_2)}$, $(\ot{K})^*=\ot{(K^*)}$ and, if $K$ is invertible, $(\ot{K})^{-1}=\ot{(K^{-1})}$.

\newpage

Likewise, given any vector $\bm{\alpha}\in\R^m$, the linear operator $\ot{\bm{\alpha}}:\Hi\to\Hi^m$ is defined for $x\in\Hi$ as
\begin{equation}\label{eq: KronVector}
\ot{\bm{\alpha}} x:=(\alpha_1 x,\ldots,\alpha_m x).
\end{equation}
Thus, $\ot{\bm{\alpha}^*}:\Hi^m\to\Hi$, where for all $\bm{v}\in\Hi^m$,
\[\ot{\bm{\alpha}^*}\bm{v}=\sum_{j=1}^m\alpha_jv_j.\]
We denote the \emph{diagonal set} as $\Delta_n:=\ot{(\bm{1}_n)}\Hi=\{(x,\ldots,x)\in\Hi^n\mid x\in\Hi\}$, where $\bm{1}_n\in\mathbb{R}^n$ is the vector whose components are all one.

Finally, we recall the notion of a generalized inverse for continuous linear operators with closed range. For some references on this topic, we suggest \cite{bauschke} and \cite{groetsch}.

\begin{definition}\label{def: Moore--Penrose inverse}
    Let $T:\Hi\to\mathcal{K}$ be a continuous linear operator with closed range. The \emph{Moore--Penrose inverse} of $T$ is the continuous linear operator $T^\dagger:\mathcal{K}\to \Hi$ such that $TT^\dagger=P_{\ran T}$ and $T^\dagger T=P_{\ran T^\dagger}$.
\end{definition}

Although there are many equivalent definitions for the Moore--Penrose inverse, the existence and uniqueness of $T^\dagger$ is always guaranteed (see, e.g., {\cite[Theorems 2.2.1 and 2.2.2]{groetsch}}).

\begin{fact}[{\cite[Proposition 3.30 and Exercise 3.13]{bauschke}}]\label{fact: Moore--Penrose}
    Let $T:\Hi\to\mathcal{K}$ be linear and continuous with closed range. Then, the following hold:
    \begin{enumerate}[label=(\roman*)]
        \item\label{fact: range} $\ran T^\dagger =\ran T^\ast$.
        \item\label{fact: kernel} $\ker T^\dagger=\ker T^\ast$.
    \end{enumerate}
\end{fact}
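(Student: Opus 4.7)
The plan is to first derive the standard Moore--Penrose identities $TT^\dagger T=T$ and $T^\dagger TT^\dagger=T^\dagger$ from the given definition, and then to exploit the self-adjointness of the projections $TT^\dagger$ and $T^\dagger T$. The two compatibility identities come almost for free: since $Tx\in\ran T$ for every $x\in\Hi$, applying $P_{\ran T}=TT^\dagger$ fixes it, so $TT^\dagger T=T$; the other identity is symmetric, applying $P_{\ran T^\dagger}=T^\dagger T$ to any element of $\ran T^\dagger$. Taking adjoints of the first identity gives $T^*(T^\dagger)^*T^*=T^*$, and the self-adjointness of $T^\dagger T$ yields the key relation $T^*(T^\dagger)^*=T^\dagger T$, together with its counterpart $(T^\dagger)^*T^*=TT^\dagger$ from $TT^\dagger$.

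For item \labelcref{fact: range}, since $T^\dagger T=P_{\ran T^\dagger}$, its range is precisely $\ran T^\dagger$. Rewriting this with the identity above gives $\ran T^\dagger=\ran(T^\dagger T)=\ran(T^*(T^\dagger)^*)\subseteq\ran T^*$. For the reverse inclusion, given any $y\in\mathcal{K}$, the chain $T^*y=T^*(T^\dagger)^*T^*y=T^\dagger T T^*y=T^\dagger(TT^*y)$ exhibits $T^*y$ as an element of $\ran T^\dagger$.

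For item \labelcref{fact: kernel}, the inclusion $\ker T^\dagger\subseteq\ker(TT^\dagger)$ is immediate, and $\ker(TT^\dagger)=\ker P_{\ran T}=(\ran T)^\perp=\ker T^*$, so one direction follows at once. For the converse, if $T^*y=0$, then $TT^\dagger y=(T^\dagger)^*T^*y=0$, and invoking the second Moore--Penrose identity gives $T^\dagger y=T^\dagger TT^\dagger y=T^\dagger(TT^\dagger y)=0$, as desired.

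I expect the main care to lie in the initial step, namely checking that the compact two-equation definition of $T^\dagger$ used here really does encode the classical Moore--Penrose axioms (in particular the self-adjointness of the two products, which is built into calling them orthogonal projections). Once the identities $TT^\dagger T=T$ and $T^\dagger TT^\dagger=T^\dagger$ are in hand, the rest reduces to manipulating self-adjointness of projections and the standard identification of the kernel of an orthogonal projection with the orthogonal complement of its range.
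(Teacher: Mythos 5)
Your proof is correct. Note that the paper does not prove this statement at all: it is stated as a \emph{Fact} and attributed to \cite[Proposition~3.30 and Exercise~3.13]{bauschke}, so there is no in-paper argument to compare against. What you supply is a self-contained derivation from the paper's Definition~\ref{def: Moore--Penrose inverse}, and it is sound: the identities $TT^\dagger T=T$ and $T^\dagger TT^\dagger=T^\dagger$ do follow immediately from $TT^\dagger=P_{\ran T}$ and $T^\dagger T=P_{\ran T^\dagger}$ (a projection fixes its own range), the self-adjointness of orthogonal projections gives $T^*(T^\dagger)^*=T^\dagger T$ and $(T^\dagger)^*T^*=TT^\dagger$, and from there both inclusions in each of \labelcref{fact: range} and \labelcref{fact: kernel} follow exactly as you write them, using $\ker P_{\ran T}=(\ran T)^\perp=\ker T^*$. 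The caveat you raise yourself --- that the two-equation definition genuinely encodes the four Penrose axioms, in particular the self-adjointness of $TT^\dagger$ and $T^\dagger T$ --- is the only point of care, and it is resolved by observing that $P_S$ in this paper always denotes the \emph{orthogonal} projection; one could add that $\ran(T^\dagger T)=\ran T^\dagger$ also follows directly from $T^\dagger TT^\dagger=T^\dagger$ without appealing to closedness of $\ran T^\dagger$. This is essentially the standard textbook argument, so while it goes beyond what the paper does (a bare citation), it is not a novel route.
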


\begin{remark}\label{rem: Moore--Penrose inverse}
    If $T$ is injective, it can be shown (see {\cite[Example 3.29]{bauschke}}) that $T^\dagger=(T^\ast T)^{-1}T^\ast$, which, consequently, leads to the identity $T^\dagger T=\Id_\Hi$. Hence, if $K\in\R^{m\times n}$ is a matrix with full column rank, then $(\ot K)^\dagger=\ot{(K^\dagger)}$.
    A relevant example for this work is the case  of the linear mapping defined by~\cref{eq: KronVector} for a vector $\bm{\alpha}\in\R^m$.
    If $\bm{\alpha}\neq0$, we obtain
    \[\bm{\alpha}^\dagger=(\bm{\alpha}^*\bm{\alpha})^{-1}\bm{\alpha}^*=\frac{\bm{\alpha}^*}{\norm{\bm{\alpha}}^2}.\]
    On the other hand, if $\bm{\alpha}=0$, then by \cref{def: Moore--Penrose inverse} we get $0=\bm{\alpha}^\dagger\bm{\alpha}=P_{\ran \bm{\alpha}^\dagger}$, which is equivalent to say that $\ran \bm{\alpha}^\dagger=0$, i.e., $\bm{\alpha}^\dagger=0$.
\end{remark}

\subsection{Preconditioned proximal-point problems}\label{sec: 2.2}

A way to tackle problem \cref{eq: P} is by solving a \emph{preconditioned proximal point problem}, which is based on transforming \cref{eq: P} into a fixed-point problem by means of a positive semidefinite operator $M:\Hi\to\Hi$.
\begin{definition}\label{def: admissible preconditioner}
    Let $A:\Hi\rightrightarrows \Hi$ be a set-valued operator and $M:\Hi\to \Hi$ be a linear, continuous, self-adjoint and positive semidefinite operator. We say that $M$ is an \emph{admissible preconditioner} for $A$ if
    \begin{equation}\label{eq: JMA}
        J_{M^{-1}A}=(M+A)^{-1}M\text{ is single-valued and has full domain.}
    \end{equation}
\end{definition}
By doing some algebraic manipulations, one can easily check that
\[0\in A(x)\Leftrightarrow Mx \in A(x) + Mx \Leftrightarrow x\in (A+M)^{-1}Mx=J_{M^{-1}A}(x).\]
This can also be expressed by the following equality:
\begin{equation}\label{eq: zeros fixed}
    \zer A=\Fix J_{M^{-1}A}.
\end{equation}

Since an admissible preconditioner $M$ is self-adjoint and positive semidefinite, by~\cite[Proposition 2.3]{degenerate-ppp}, it can be split as $M=CC^\ast$ for some injective operator $C:\mathcal{K}\to\mathcal{H}$ and some Hilbert space $\mathcal{K}$. If $\operatorname{ran}M$ is closed, this factorization is called an \emph{onto decomposition} of $M$. Such factorization is unique up to orthogonal transformations, see~\cite[Proposition~2.2]{graph-drs}. As we subsequently detail, this allows  rewriting~\cref{eq: JMA} in terms of the resolvent of the \emph{parallel composition} $$C^\ast\rhd A:=(C^*A^{-1}C)^{-1}.$$
\begin{fact}[{\cite[Theorem 2.13]{degenerate-ppp}}]\label{lem: parallel resolvent}
    Let $A:\Hi\rightrightarrows \Hi$, let $M$ be an admissible preconditioner for $A$ and let $M=CC^\ast$ be an onto decomposition. Then $C^\ast \rhd A$ is maximally monotone and
    \begin{equation}\label{eq: JCA}
        J_{C^\ast\rhd A}=C^\ast(M+A)^{-1}C.
    \end{equation}
\end{fact}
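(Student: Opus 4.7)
The plan is to first establish the formula~\cref{eq: JCA}, and then deduce maximal monotonicity of $C^\ast \rhd A$ as a consequence of that formula together with Minty's theorem.

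For the formula, I would unwind the definition on $\mathcal{K}$: $u = J_{C^\ast \rhd A}(z)$ is equivalent to $z - u \in (C^\ast A^{-1} C)^{-1}(u)$, which, by taking inverses, is equivalent to the existence of $w \in \Hi$ with $C^\ast w = u$ and $C(z-u) \in A(w)$. Substituting $u = C^\ast w$ and using $M = CC^\ast$, the last inclusion rewrites as $Cz - Mw \in Aw$, i.e., $Cz \in (M+A)(w)$. Hence \cref{eq: JCA} reduces to showing that for every $z \in \mathcal{K}$ the set $(M+A)^{-1}(Cz)$ is a singleton, and that its image under $C^\ast$ is precisely $u$. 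This is where the onto decomposition hypothesis enters: it ensures that $C^\ast : \Hi \to \mathcal{K}$ is surjective, so for each $z \in \mathcal{K}$ there exists $x \in \Hi$ with $C^\ast x = z$, whence $Cz = Mx$; then
\[
(M+A)^{-1}(Cz) = (M+A)^{-1}(Mx) = \{J_{M^{-1}A}(x)\},
\]
which is single-valued by admissibility~\cref{eq: JMA}. Applying $C^\ast$ yields~\cref{eq: JCA}.

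For maximal monotonicity of $C^\ast \rhd A$, I would first check monotonicity straight from the graph characterization. Given pairs $(u_i,y_i) \in \gra(C^\ast \rhd A)$ for $i=1,2$, there exist $w_i \in \Hi$ with $C y_i \in A(w_i)$ and $C^\ast w_i = u_i$; then
\[
\langle u_1 - u_2, y_1 - y_2\rangle = \langle C^\ast(w_1-w_2), y_1-y_2\rangle = \langle w_1-w_2, C(y_1-y_2)\rangle \geq 0,
\]
by monotonicity of $A$. Maximality is then immediate from \cref{lem: single-valued full domain resolvent}: the formula just established shows that $J_{C^\ast \rhd A}$ is single-valued with full domain $\mathcal{K}$, which is Minty's characterization of maximal monotonicity.

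The main obstacle is the single-valuedness argument inside the derivation of~\cref{eq: JCA}: it hinges crucially on the onto decomposition hypothesis, which lets one pull back any $Cz$ into the range of $M$ and thereby reduce $(M+A)^{-1}C$ to the admissible resolvent $J_{M^{-1}A}$. Without surjectivity of $C^\ast$, one cannot rule out a priori the existence of several $w$'s producing the same $u$. Once this translation is set up, the monotonicity verification and the appeal to Minty are routine.
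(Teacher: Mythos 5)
Your argument is correct. The paper itself offers no proof of this statement --- it is imported as a Fact from \cite[Theorem 2.13]{degenerate-ppp} --- so there is no internal proof to compare against; your derivation is essentially the standard one from that reference: unwind $J_{C^\ast\rhd A}$ via the definition of the parallel composition to get the set-valued identity with $C^\ast(M+A)^{-1}C$, use admissibility to get single-valuedness and full domain, check monotonicity on the graph, and conclude maximality by Minty (\cref{lem: single-valued full domain resolvent}). The one step worth spelling out is why the onto decomposition makes $C^\ast$ surjective: injectivity of $C$ gives $\overline{\ran C^\ast}=\mathcal{K}$, and closedness of $\ran M=\ran CC^\ast$ forces $\ran C$ (hence $\ran C^\ast$) to be closed, so $\ran C^\ast=\mathcal{K}$; with that filled in, the reduction of $(M+A)^{-1}(Cz)$ to $J_{M^{-1}A}(x)$ for $x$ with $C^\ast x=z$ is exactly right.
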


Thanks to this fact, we can bring \cref{eq: JMA} back and rewrite it in terms of $J_{C^\ast \rhd A}$. Indeed, recalling that $J_{M^{-1}A}=(M+A)^{-1}M=(M+A)^{-1}CC^\ast$, this yields
\begin{equation*}\label{eq: JMA JCA}
    J_{C^*\rhd A}C^*=C^*J_{M^{-1} A}.
\end{equation*}
Not only this, but the following proposition ensures a bijective correspondence between them.

\begin{fact}[{\cite[Proposition 2.1]{bauschke-fixedpoints}}]\label{fact: Fix T bijection}
    The operator $C^*|_{\Fix J_{M^{-1}A}}$ is a bijection from
    $\Fix J_{M^{-1}A}$ to $\Fix J_{C^*\rhd A}$, with inverse
    $(M+A)^{-1}C|_{\Fix J_{C^*\rhd A}}$.
\end{fact}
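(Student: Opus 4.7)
The plan is to leverage two ingredients already established in the excerpt: the intertwining identity $J_{C^*\rhd A}C^* = C^* J_{M^{-1}A}$ displayed just before the statement, and the factorization $J_{M^{-1}A}=(M+A)^{-1}M=(M+A)^{-1}CC^*$ coming from the onto decomposition $M=CC^*$. With these in hand, the bijection is essentially a direct computation.

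First I would verify that $C^*$ sends $\Fix J_{M^{-1}A}$ into $\Fix J_{C^*\rhd A}$: if $x = J_{M^{-1}A}(x)$, applying $C^*$ and using the intertwining identity yields $C^*x = C^* J_{M^{-1}A}(x) = J_{C^*\rhd A}(C^*x)$. Next I would set $\Psi := (M+A)^{-1}C|_{\Fix J_{C^*\rhd A}}$ and, symmetrically, show that $\Psi$ lands in $\Fix J_{M^{-1}A}$: writing $x=\Psi(y)$ and using $M=CC^*$ with $C^*\Psi(y)=J_{C^*\rhd A}(y)=y$, one computes $J_{M^{-1}A}(x) = (M+A)^{-1}CC^*x = (M+A)^{-1}Cy = \Psi(y) = x$. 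The two inverse identities then fall out at once: for $x\in\Fix J_{M^{-1}A}$, $\Psi(C^*x) = (M+A)^{-1}CC^*x = (M+A)^{-1}Mx = J_{M^{-1}A}(x) = x$; and for $y\in\Fix J_{C^*\rhd A}$, $C^*\Psi(y) = C^*(M+A)^{-1}Cy = J_{C^*\rhd A}(y) = y$ by \cref{lem: parallel resolvent}.

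The main obstacle, and the one genuinely subtle point, is the well-definedness of $\Psi$ as a \emph{single-valued} map. The operator $(M+A)^{-1}$ is typically multi-valued when $M$ is degenerate, so a priori $(M+A)^{-1}Cy$ could be a set with more than one element. To resolve this I would invoke both admissibility of $M$ and the onto decomposition: since $M=CC^*$ with $\ran M$ closed and $C$ injective, $C^*\colon\Hi\to\mathcal{K}$ is surjective; hence for any $y\in\mathcal{K}$ (in particular any $y\in\Fix J_{C^*\rhd A}$) we may pick $z\in\Hi$ with $y=C^*z$, and then $(M+A)^{-1}Cy = (M+A)^{-1}CC^*z = J_{M^{-1}A}(z)$, which is a single point by admissibility. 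Because the value $J_{M^{-1}A}(z)$ obtained in this way must agree with $C^*$-preimage-independence (any other $z'$ with $C^*z'=y$ yields the same singleton, since both reduce to $(M+A)^{-1}Cy$), the map $\Psi$ is well defined on $\Fix J_{C^*\rhd A}$, and the two computations above then complete the argument.
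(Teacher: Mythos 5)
Your proof is correct. The paper itself does not prove this statement---it is imported as a Fact with a citation to \cite{bauschke-fixedpoints}---so there is no in-paper argument to compare against; your derivation is a valid self-contained reconstruction from exactly the two ingredients available in the surrounding text, namely the intertwining identity $J_{C^*\rhd A}C^*=C^*J_{M^{-1}A}$ and the formula $J_{C^*\rhd A}=C^*(M+A)^{-1}C$ of \cref{lem: parallel resolvent}. You also correctly isolate and resolve the only delicate point: that $(M+A)^{-1}C$ is single-valued on $\Fix J_{C^*\rhd A}$, which you obtain from the surjectivity of $C^*$ (a consequence of the onto decomposition, since $\ran M=\ran CC^*$ closed forces $\ran C^*$ closed while injectivity of $C$ makes $\ran C^*$ dense) together with the admissibility of $M$.
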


This result, together with \cref{eq: zeros fixed}, reveals that computing the zeros of $A$ is equivalent to finding the fixed points of either $J_{M^{-1}A}$ or $J_{C^*\rhd A}$. Motivated by this relationship, Bredies et al.~\cite{degenerate-ppp} propose to generate two sequences for finding points in $\zer A$:
\begin{align}
    u^{k+1}&:=(1-\theta_k)u^k+\theta_kT(u^k)\in\Hi,\text{ with }T:=J_{M^{-1}A}, \label{eq: algorithm T}\\
    v^{k+1}&:=(1-\theta_k)v^k+\theta_k\widetilde{T}(v^k)\in\mathcal{K}, \text{ with }\widetilde{T}:=J_{C^*\rhd A}, \label{eq: algorithm T tilde}
\end{align}
for some starting point $u^0\in\Hi$ and $v^0:=C^*u^0$, where $(\theta_k)_{k\in\N}$ is a sequence of relaxation parameters in~$[0,2]$ satisfying $\sum_{k\in\N}\theta_k(2-\theta_k)=+\infty$. The algorithm defined by \cref{eq: algorithm T} is called the \emph{preconditioned proximal point method}, while \cref{eq: algorithm T tilde} is the \emph{reduced preconditioned proximal point method}.

Before presenting the main results obtained in~\cite{degenerate-ppp} and~\cite{bauschke-fixedpoints} for the sequences generated by the preconditioned proximal point methods, we need the following notion of projection, introduced in~\cite[Definition 3.2]{bauschke-fixedpoints}.
\begin{definition}\label{def: M-proj}
The $M$\emph{-projection} of an element $u\in\Hi$ onto some subset $S\subseteq\Hi$, denoted as $P^M_S(u)$, is the projection of $u$ onto $S$ with the seminorm $\norm{\cdot}_M$ given by
$$\norm{x}_M:=\sqrt{\langle x,Mx\rangle}=\norm{C^*x}, \text{ for } x\in\Hi.$$
\end{definition}
In general, the $M$-projection is set-valued. This is not the case for $\Fix T$.
\begin{fact}[{\cite[Theorem 3.1]{bauschke-fixedpoints}}]\label{fact: projections relations}
The $M$-projection $P^M_{\Fix T}:\Hi\to\Fix T$ is single-valued. Further, the following holds
\begin{equation*}
    P^M_{\Fix T}=(M+A)^{-1}CP_{\Fix \widetilde{T}}C^*\quad\text{and}\quad
    C^*P^M_{\Fix T}=P_{\Fix \widetilde{T}}C^*.
\end{equation*}
\end{fact}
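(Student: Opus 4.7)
The plan is to reduce the $M$-projection problem on $\Hi$ to a standard projection problem on $\mathcal{K}$ via the bijection between $\Fix T$ and $\Fix \widetilde{T}$ from \cref{fact: Fix T bijection}. Since $\widetilde{T}=J_{C^*\rhd A}$ is the resolvent of a maximally monotone operator (\cref{lem: parallel resolvent}), it is firmly nonexpansive, so $\Fix \widetilde{T}$ is a closed convex subset of $\mathcal{K}$ and the ordinary metric projection $P_{\Fix \widetilde{T}}$ onto it is well-defined and single-valued.

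First I would fix $u\in\Hi$ and rewrite the $M$-distance using \cref{def: M-proj}: for $x\in\Fix T$,
\[\norm{x-u}_M^2=\norm{C^*x-C^*u}^2.\]
By \cref{fact: Fix T bijection}, the map $x\mapsto y:=C^*x$ is a bijection from $\Fix T$ onto $\Fix \widetilde{T}$. Hence minimizing $\norm{x-u}_M$ over $x\in\Fix T$ is equivalent to minimizing $\norm{y-C^*u}$ over $y\in\Fix \widetilde{T}$. The latter admits the unique minimizer $y^*=P_{\Fix \widetilde{T}}(C^*u)$, and pulling it back through the inverse bijection $(M+A)^{-1}C|_{\Fix \widetilde{T}}$ yields a unique $x^*\in\Fix T$ with
\[x^*=(M+A)^{-1}C\,P_{\Fix \widetilde{T}}(C^*u)\quad\text{and}\quad C^*x^*=P_{\Fix \widetilde{T}}(C^*u).\]
This simultaneously establishes single-valuedness of $P^M_{\Fix T}$ and both claimed identities.

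The delicate point I would need to check carefully is the single-valuedness: since $\norm{\cdot}_M$ is only a seminorm, a priori the minimizer in $\Fix T$ is only unique up to the kernel of $C^*$, so one might worry that two distinct fixed points $x_1,x_2\in\Fix T$ achieve the minimum with $C^*x_1=C^*x_2=y^*$. This is precisely where \cref{fact: Fix T bijection} is essential: the injectivity of $C^*$ restricted to $\Fix T$ forces $x_1=x_2$. Apart from this observation, the argument is a direct translation via the bijection, so I do not anticipate further technical obstacles; the formulas for $P^M_{\Fix T}$ and $C^*P^M_{\Fix T}$ then follow by reading off the image and the preimage under $C^*$ of the unique minimizer.
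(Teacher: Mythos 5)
The paper states this result as a Fact, citing [Theorem 3.1] of the reference \texttt{bauschke-fixedpoints} without reproducing a proof, so there is no in-paper argument to compare against; your derivation is correct and uses precisely the ingredients the paper assembles for this purpose, namely the identity $\norm{x}_M=\norm{C^*x}$ from \cref{def: M-proj} and the bijection of \cref{fact: Fix T bijection}, whose injectivity on $\Fix T$ you correctly identify as the step that upgrades uniqueness of the image $C^*x^*$ to uniqueness of the minimizer $x^*$ itself. The only hypothesis worth flagging explicitly is that $\Fix \widetilde{T}$ must be nonempty (equivalently $\zer A\neq\emptyset$, a standing assumption in this setting) for $P_{\Fix \widetilde{T}}$ to be defined; closedness and convexity indeed follow from the firm nonexpansiveness of $\widetilde{T}=J_{C^*\rhd A}$, as you note.
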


We can finally summarize the information about the limit points of the sequences $(u^k)_{k\in\N}$ and $(v^k)_{k\in\N}$ defined by the preconditioned proximal point methods.
\begin{fact}\label{fact: proj}
    Let $A: \Hi\tto\Hi$ be a maximally monotone operator with $\zer A \neq \emptyset$ and $M$ be an admissible preconditioner such that $(M+A)^{-1}$ is Lipschitz. Let $(u^k)_{k\in\N}$ and $(v^k)_{k\in\N}$ be the sequences generated by~\cref{eq: algorithm T} and~\cref{eq: algorithm T tilde} for some starting point $u^0\in\Hi$ and $v^0:=C^*u^0$.
    Then the following assertions hold:

    \begin{enumerate}[label=(\roman*)]
    \item \label{fact: proj_i} The sequence $(T(u^k))_{k\in\N}$ weakly converges to some point $\overline{u}\in\Fix T$. Furthermore, if $0<\inf\theta_k\leq\sup\theta_k<2$, then $(u^k)_{k\in\N}$ weakly converges to $\overline{u}$ as well.
    \item \label{fact: proj_ii} The sequence $(v^k)_{k\in\N}$ weakly converges to some point $\overline{v}\in\Fix \widetilde{T}$.
    \item \label{fact: proj_iii} It holds $v^k=C^*u^k$ and $T(u^k)=(M+A)^{-1}Cv^k$, for all $k\in\N$. Moreover, $\overline{v}=C^*\overline{u}$ and $\overline{u}=(M+A)^{-1}C\overline{v}$.
    \item \label{fact: proj_iv} If $\overline{v}=P_{\Fix \widetilde{T}}(v^0)$, then $\overline{u}=P^M_{\Fix T}(u^0)$.
    \item \label{fact: proj_v} If $\Fix \widetilde{T}$ is an affine subspace of $\mathcal{K}$, then $\overline{v}=P_{\Fix \widetilde{T}}(v^0)$.
    \item \label{fact: proj_vi} If $A$ is a linear relation (i.e., its graph
is a linear subspace of $\Hi\times\Hi$) and $(\theta_k)_{k\in\N}$ is identical to some constant $\theta\in{]0,2[}$, then $\Fix \widetilde{T}$ is a linear subspace and the convergence in \ref{fact: proj_i} and \ref{fact: proj_ii} is strong.
    \end{enumerate}
\end{fact}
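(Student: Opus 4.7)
The strategy is to reduce everything to the reduced iteration $(v^k)$ in $\mathcal{K}$, whose driving map $\widetilde{T}=J_{C^*\rhd A}$ is, by \cref{lem: parallel resolvent}, the resolvent of the maximally monotone operator $C^*\rhd A$, and is therefore firmly nonexpansive on the ordinary norm of $\mathcal{K}$. I would begin with the algebraic content of \ref{fact: proj_iii}. An induction on $k$ yields $v^k=C^*u^k$ for every $k$: the base case is the hypothesis $v^0=C^*u^0$, and the inductive step rests on the intertwining identity $J_{C^*\rhd A}C^*=C^*J_{M^{-1}A}$ displayed just before \cref{fact: Fix T bijection}, together with the linearity of $C^*$. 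The second identity $T(u^k)=(M+A)^{-1}Cv^k$ is immediate from $T(u^k)=(M+A)^{-1}Mu^k$ and $M=CC^*$. The limit versions $\overline{v}=C^*\overline{u}$ and $\overline{u}=(M+A)^{-1}C\overline{v}$ are obtained by passing to weak limits in these identities, using weak-to-weak continuity of the linear operators $C^*$ and $C$, and the fact that $T(\overline{u})=\overline{u}$ since $\overline{u}\in\Fix T$.

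I would next establish \ref{fact: proj_ii} by a classical Krasnosel'skii--Mann argument. The nonemptiness of $\Fix\widetilde{T}$ is inherited from $\zer A\neq\varnothing$ via \cref{eq: zeros fixed} and \cref{fact: Fix T bijection}. Firm nonexpansiveness of $\widetilde{T}$ together with $\sum_k\theta_k(2-\theta_k)=+\infty$ delivers Fej\'er monotonicity of $(v^k)$ with respect to $\Fix\widetilde{T}$, boundedness, and $\widetilde{T}(v^k)-v^k\to 0$; the demiclosedness principle then confines weak cluster points to $\Fix\widetilde{T}$, yielding weak convergence of $(v^k)$ to some $\overline{v}\in\Fix\widetilde{T}$. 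For \ref{fact: proj_i}, I would exploit $T(u^k)=(M+A)^{-1}Cv^k$: since $Cv^k\wto C\overline{v}$, the main subtlety is transferring weak convergence through the possibly nonlinear Lipschitz map $(M+A)^{-1}$. The standard device is to combine Lipschitz continuity with the demiclosedness of the graph of $A$: noting $C^*T(u^k)=\widetilde{T}(v^k)$ and $\widetilde{T}(v^k)-v^k\to 0$, any weak cluster point $w$ of $(T(u^k))$ lies in $\Fix T$ and satisfies $C^*w=\overline{v}$, so by the injectivity of $C^*|_{\Fix T}$ in \cref{fact: Fix T bijection} it is uniquely determined as $\overline{u}$. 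Under $0<\inf\theta_k\leq\sup\theta_k<2$, $(u^k)$ is additionally Fej\'er monotone in $\norm{\cdot}_M$ with $\norm{u^{k+1}-u^k}_M\to 0$, and the same uniqueness argument gives $u^k\wto\overline{u}$.

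For \ref{fact: proj_iv}, applying $C^*$ to $P^M_{\Fix T}(u^0)$ and using \cref{fact: projections relations} gives
\[
C^*P^M_{\Fix T}(u^0)=P_{\Fix\widetilde{T}}(C^*u^0)=P_{\Fix\widetilde{T}}(v^0)=\overline{v}=C^*\overline{u},
\]
and the injectivity of $C^*|_{\Fix T}$ from \cref{fact: Fix T bijection} forces $\overline{u}=P^M_{\Fix T}(u^0)$. Item \ref{fact: proj_v} is the classical fact that a Krasnosel'skii--Mann iteration of a firmly nonexpansive operator whose fixed-point set is affine converges to the projection of the initial point, obtained by combining Fej\'er monotonicity with the variational characterization of the projection onto an affine subspace. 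For \ref{fact: proj_vi}, if $A$ is a linear relation then so is $C^*\rhd A=(C^*A^{-1}C)^{-1}$, making $\widetilde{T}$ a single-valued, everywhere-defined linear firmly nonexpansive operator; its fixed-point set is therefore a closed linear subspace, and a classical spectral argument on the averaged operator $(1-\theta)\Id_{\mathcal{K}}+\theta\widetilde{T}$ upgrades weak to strong convergence of $(v^k)$. Strong convergence of $T(u^k)$ follows from the Lipschitz continuity of $(M+A)^{-1}$, and strong convergence of $(u^k)$ is then extracted from the recursion $u^{k+1}-\overline{u}=(1-\theta)(u^k-\overline{u})+\theta(T(u^k)-\overline{u})$ using $\theta\in(0,2)$. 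The principal obstacle throughout is \ref{fact: proj_i}: the degeneracy of $M$ prevents a direct weak-to-weak transfer through $(M+A)^{-1}$, and the argument crucially relies on the identities from \ref{fact: proj_iii} to anchor the convergence in the reduced space.
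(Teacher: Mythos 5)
The paper does not actually prove this statement: it is a ``Fact'' whose proof consists entirely of citations to \cite{degenerate-ppp} and \cite{bauschke-fixedpoints}. Your reconstruction from first principles follows the same architecture as those sources (anchor everything in the reduced iteration, where $\widetilde{T}=J_{C^*\rhd A}$ is firmly nonexpansive; transfer back through the identities of \ref{fact: proj_iii}), and items \ref{fact: proj_ii}--\ref{fact: proj_vi} and the first half of \ref{fact: proj_i} are correct in outline. In particular, your treatment of the weak cluster points of $(T(u^k))$ is right: since $MT(u^k)=C\widetilde{T}(v^k)$, the inclusion $Cv^k-MT(u^k)\in A(T(u^k))$ has left-hand side $C(v^k-\widetilde{T}(v^k))\to 0$ \emph{strongly}, so weak--strong closedness of $\gra A$ applies and cluster points land in $\zer A=\Fix T$; it would be worth making this explicit, as demiclosedness would fail if both entries converged only weakly.

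The one genuine gap is your justification of the second half of \ref{fact: proj_i}. Fej\'er monotonicity of $(u^k)$ in the seminorm $\norm{\cdot}_M$ together with $\norm{u^{k+1}-u^k}_M\to 0$ proves nothing about $(u^k)$ in $\Hi$: the seminorm is degenerate, so these statements are blind to the component of $u^k$ in $\ker M$, and they yield neither boundedness of $(u^k)$ nor identification of its weak cluster points. The correct mechanism is different and is exactly where the hypotheses ``$(M+A)^{-1}$ Lipschitz'' and ``$0<\inf\theta_k\le\sup\theta_k<2$'' are consumed: from $T(u^k)=(M+A)^{-1}Cv^k$ and $\|v^{k+1}-v^k\|=\theta_k\|\widetilde{T}(v^k)-v^k\|\to 0$ one gets $\|T(u^{k+1})-T(u^k)\|\to 0$ in the \emph{full} norm, and then the recursion
\begin{equation*}
\|T(u^k)-u^k\|\;\le\;\|T(u^k)-T(u^{k-1})\|+|1-\theta_{k-1}|\,\|T(u^{k-1})-u^{k-1}\|,
\end{equation*}
with $\sup_k|1-\theta_k|<1$, forces $\|T(u^k)-u^k\|\to 0$ strongly, whence $(u^k)$ is bounded and shares the weak limit $\overline{u}$ of $(T(u^k))$. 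Without this step your argument for $u^k\wto\overline{u}$ does not close. The remaining items are fine as sketched, modulo routine verifications (closedness of $\Fix\widetilde{T}$ in \ref{fact: proj_v}, and the classical strong convergence of powers of an averaged \emph{linear} nonexpansive operator in \ref{fact: proj_vi}).
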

\begin{proof}
\ref{fact: proj_i}: See~\cite[Corollary 2.10]{degenerate-ppp}. \ref{fact: proj_ii}: See~\cite[Corollary 2.15]{degenerate-ppp}. \ref{fact: proj_iii}: This was proved in~\cite{degenerate-ppp}, for more details, see~\cite[Fact~1.1]{bauschke-fixedpoints}. \ref{fact: proj_iv}: See~\cite[Theorem 3.6]{bauschke-fixedpoints}. \ref{fact: proj_v}: See~\cite[Theorem 3.4]{bauschke-fixedpoints}. \ref{fact: proj_vi}: See~\cite[Theorem 4.2 and Remark 3.5]{bauschke-fixedpoints}.
\end{proof}

\subsection{Graph Theory}\label{sec: 2.3}

We say that $G=(\mathcal{N},\mathcal{E})$ is a \emph{directed graph} if $\mathcal{N}$ is a finite set and $\mathcal{E}\subseteq\mathcal{N}\times\mathcal{N}$. The elements of $\mathcal{N}$ are referred as \emph{nodes}, while the elements of $\mathcal{E}$ are called \emph{edges}. The \emph{order} of the graph is the number of nodes~$|\mathcal{N}|$. Further, we say that $G'=(\mathcal{N},\mathcal{E}')$ is a \emph{spanning subgraph} of $G$ if $\mathcal{E}'\subseteq\mathcal{E}$.

Two nodes $i,j\in\mathcal{N}$ are \emph{adjacent} if $(i,j)\in\mathcal{E}$ or $(j,i)\in\mathcal{E}$. The \emph{in-degree}, \emph{out-degree} and \emph{degree} are defined respectively as  $d_i^{\rm in}:=|\{j\in\mathcal{N}:(j,i)\in\mathcal{E}\}|$, $d_i^{\rm out}:=|\{j\in\mathcal{N}:(i,j)\in\mathcal{E}\}|$ and
\begin{equation}\label{eq: degree}
d_i:=d_i^{\rm in}+d_i^{\rm out}.
\end{equation}

A \emph{path} in $G$ is a finite sequence of distinct nodes $(v_1,\ldots,v_r)$ with $r\geq2$ such that $v_k$ and $v_{k+1}$ are adjacent for all $k=1,\ldots,r-1$. The nodes $v_1$ and $v_r$ are called \emph{endpoints}. A graph $G$ is said to be \emph{connected} if for all distinct nodes $i,j\in\mathcal{N}$, there exists a path with $i$ and $j$ as endpoints. Further, $G$ is a \emph{tree} if it is connected and $|\mathcal{E}|=n-1$.

In this work all graphs are assumed to satisfy the following structure, which is suitable for defining splitting algorithms.

\begin{definition}\label{def: algorithmic graph}
    We say that $G=(\mathcal{N},\mathcal{E})$ is an \emph{algorithmic graph} if
    \begin{enumerate}[label=(\roman*)]
        \item $\mathcal{N}=\{1,\ldots,n\}$ with $n\geq2$,
        \item $(i,j)\in\mathcal{E}\Rightarrow i<j$, and
        \item $G$ is connected.
    \end{enumerate}
\end{definition}

Let us present some examples of algorithmic graphs.

\begin{example}
    Particular instances of algorithmic graphs $G=(\mathcal{N},\mathcal{E})$ of order $n$ include the following:

    \begin{enumerate}[label=(\roman*)]
        \item the \emph{complete} graph, in which every pair of distinct nodes are adjacent;
        \item the \emph{sequential} graph, in which $(1,\ldots,n)$ is a path and $G$ is a tree, i.e., $|\mathcal{E}|=n-1$;
        \item the \emph{ring} graph, in which $(1,\ldots,n)$ is a path, $(1,n)\in\mathcal{E}$ and $|\mathcal{E}|=n$;
        \item the \emph{star} graph with \emph{center} $i\in\mathcal{N}$, in which $d_i=n-1$ and $d_j=1$ for $j\neq i$;
        \item the \emph{parallel up} graph, which is the star graph whose center is the first node $1$;
        \item the \emph{parallel down} graph, which is the star graph whose center is the last node $n$.
    \end{enumerate}
\end{example}

Notice that, since the number of nodes of a graph is finite, so is the number of edges. Hence, we can also index $\mathcal{E}$ with natural numbers, which is useful for defining the following matrices.

\begin{definition}\label{def: incidence matrix}
    Let $G=(\mathcal{N},\mathcal{E})$ be an algorithmic graph of order $n$.
    \begin{enumerate}[label=(\roman*)]
    \item The \emph{incidence matrix} of $G$ is the matrix $\operatorname{Inc}(G)\in\mathbb{R}^{n\times |\mathcal{E}|}$ defined componentwise as
    \[\operatorname{Inc}(G)_{i,e}:=\begin{cases}
        1 & \text{if the edge }e\text{ leaves the node }i,\\
        -1 & \text{if the edge }e\text{ enters the node }i,\\
        0 & \text{otherwise}.
    \end{cases}\]
    \item The \emph{Laplacian matrix} is $\operatorname{Lap}(G):=\operatorname{Inc}(G)\operatorname{Inc}(G)^\ast\in\mathbb{R}^{n\times n}$, componentwisely described as
    \[\operatorname{Lap}(G)_{i,j}:=\begin{cases}
        d_i & \text{if }i=j,\\
        -1 & \text{if }i\text{ and }j\text{ are adjacent}\\
        0 & \text{otherwise.}
    \end{cases}\]
    \end{enumerate}
\end{definition}

\begin{fact}\label{fact: Zdecom}
Let $G$ be an algorithmic graph of order $n$. Let $\lambda_1,\ldots,\lambda_{n-1}$ be the positive eigenvalues of $\operatorname{Lap}(G)$ and $v_1,\ldots,v_{n-1}$ be some associated orthonormal eigenvectors. Take
\begin{equation}\label{eq: Zeig}
Z:=[v_1~v_2~\cdots~v_{n-1}]\operatorname{Diag}\left(\sqrt{\lambda_1},\ldots,\sqrt{\lambda_{n-1}}\right)\in\mathbb{R}^{n\times (n-1)}.
\end{equation}
Then, $Z$ provides an onto decomposition of $\operatorname{Lap}(G)$, in the sense that
\begin{equation}\label{eq:Zdecom}
\operatorname{Lap}(G)=ZZ^\ast,
\end{equation}
which implies $\rk Z=n-1$, $\ker Z=\{\mathbf{0}_{n-1}\}$, and $\ker Z^\ast=\operatorname{span}\{\bm{1}_n\}$. Further, if $Z$ is chosen as in~\cref{eq: Zeig}, then
\begin{equation}\label{eq: Zdagger}
Z^\dagger=\operatorname{Diag}\left(\lambda_1^{-1/2},\ldots,\lambda_{n-1}^{-1/2}\right)[v_1~v_2~\cdots~v_n]^*.
\end{equation}
In particular, an onto decomposition $Z$ satisfying~\cref{eq:Zdecom} can be obtained as follows:

\begin{enumerate}[label=(\roman*)]
\item \label{fact: Zdecom_i} If $G$ is a tree, then we can take $Z=\Inc(G)$.
\item \label{fact: Zdecom_iii} If $G$ is a \emph{circulant graph} (i.e., $\operatorname{Lap}(G)$ is a circulant matrix) and $(c_1,\ldots,c_n)\in\R^n$ are the elements in the first row of $\operatorname{Lap}(G)$, then
$$\lambda_j=\sum_{k=1}^n c_k \cos\left(\frac{2\pi j(k-1)}{n}\right),\quad j=1,\ldots,n-1,$$
and one can take as their associated orthonormal eigenvectors
$$v_j=\sqrt{\frac{2}{n}}{\left(\cos\left(\frac{2\pi (i-1)j}{n}-\frac{\pi}{4}\right)\right)}_{i=1,\ldots,n}\in\R^n,\quad j=1,\ldots,n-1.$$
Therefore, $Z$ in~\cref{eq: Zeig} and $Z^\dagger$ in~\cref{eq: Zdagger} are given componentwise by
      $$Z_{i,j}=\cos\left(\frac{2\pi (i-1)j}{n}-\frac{\pi}{4}\right)\sqrt{\frac{2}{n}\sum_{k=1}^n c_k \cos\left(\frac{2\pi j(k-1)}{n}\right)}\quad\text{and}\quad Z_{j,i}^\dagger=\frac{Z_{i,j}}{\lambda_j}.$$
\item \label{fact: Zdecom_ii} If $G$ is complete, then~\ref{fact: Zdecom_iii} also applies. In addition, another sparser decomposition is given by
\begin{equation}\label{eq: Z complete}
    Z_{i,j}:=\begin{cases}
    (n-i)t_i & \text{if }i=j,\\
    -t_j &\text{if } i>j\\
    0&\text{otherwise}.
    \end{cases}
\end{equation}
where $t_j:=\sqrt{\frac{n}{(n-j)(n-j+1)}}$.

\end{enumerate}
\end{fact}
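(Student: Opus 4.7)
The plan is to first derive the general spectral decomposition of $Z$ using the symmetry and connectedness of $\operatorname{Lap}(G)$, then verify the three specific constructions (trees, circulants, and complete graphs) one by one.

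For the main claim, since $\operatorname{Lap}(G)=\operatorname{Inc}(G)\operatorname{Inc}(G)^*$ is symmetric and positive semidefinite, the spectral theorem yields an orthonormal eigenbasis of $\R^n$ with nonnegative eigenvalues. Connectedness of $G$ gives the classical fact $\ker\operatorname{Lap}(G)=\operatorname{span}\{\bm{1}_n\}$, so $\operatorname{Lap}(G)$ has exactly $n-1$ positive eigenvalues (counted with multiplicity) $\lambda_1,\ldots,\lambda_{n-1}$ with orthonormal eigenvectors $v_1,\ldots,v_{n-1}$, all orthogonal to $\bm{1}_n$. Writing $Z$ as in~\cref{eq: Zeig} gives $ZZ^*=\sum_{i=1}^{n-1}\lambda_i v_i v_i^*=\operatorname{Lap}(G)$. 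The columns of $Z$ are nonzero and pairwise orthogonal, so $\rk Z=n-1$ and $\ker Z=\{\bm{0}_{n-1}\}$; since $\operatorname{ran}Z=\operatorname{span}\{v_1,\ldots,v_{n-1}\}$, its orthogonal complement in $\R^n$ is $\operatorname{span}\{\bm{1}_n\}=\ker Z^*$. The formula~\cref{eq: Zdagger} then follows from $Z^\dagger=(Z^*Z)^{-1}Z^*$, valid by~\cref{rem: Moore--Penrose inverse} since $Z$ is injective, together with $Z^*Z=\operatorname{Diag}(\lambda_1,\ldots,\lambda_{n-1})$ by orthonormality.

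For the tree case~\cref{fact: Zdecom_i}, I recall that the incidence matrix of a connected graph with $n$ nodes has rank $n-1$; for a tree, $\operatorname{Inc}(G)$ is thus $n\times(n-1)$ with full column rank, and $\operatorname{Inc}(G)\operatorname{Inc}(G)^*=\operatorname{Lap}(G)$ holds by definition. For the circulant case~\cref{fact: Zdecom_iii}, I invoke the Fourier diagonalization of circulants: the complex modes $(\omega^{(i-1)j}/\sqrt{n})_i$ with $\omega=e^{2\pi\mathrm{i}/n}$ are eigenvectors with eigenvalues $\sum_k c_k\omega^{(k-1)j}$; symmetry of $\operatorname{Lap}(G)$ makes these eigenvalues real and equal to the stated cosine sums, with $\lambda_j=\lambda_{n-j}$. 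Using $\cos(\theta-\pi/4)=(\cos\theta+\sin\theta)/\sqrt{2}$, the vector $v_j$ is a real linear combination of the real and imaginary parts of the $j$-th Fourier mode and hence lies in the eigenspace of $\lambda_j$. Orthonormality of $\{v_j\}_{j=1}^{n-1}$ then reduces, via the product-to-sum identity, to the classical fact that $\sum_{i=1}^n e^{2\pi\mathrm{i}(i-1)m/n}$ vanishes unless $m\equiv 0\pmod n$. Finally, for the complete graph case~\cref{fact: Zdecom_ii}, since $K_n$ is circulant the previous part already supplies one onto decomposition; the sparser lower-triangular $Z$ in~\cref{eq: Z complete} is then verified directly, using the telescoping identity $\sum_{j=1}^{i-1}\tfrac{n}{(n-j)(n-j+1)}=\tfrac{n}{n-i+1}-1$ to show that $ZZ^*$ has diagonal entries equal to $n-1$ and off-diagonal entries equal to $-1$, matching $\operatorname{Lap}(K_n)=nI_n-\bm{1}_n\bm{1}_n^*$.

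The main technical effort lies in~\cref{fact: Zdecom_iii}: while the spectrum of real symmetric circulants is standard, confirming that the specific phase-shifted family $\cos(\cdot-\pi/4)$ is itself orthonormal requires a careful trigonometric computation. The $-\pi/4$ shift is not decorative; it bundles each cosine mode at frequency $j$ with its corresponding sine mode (which shares the eigenvalue $\lambda_j=\lambda_{n-j}$) into a single unit-norm eigenvector, so that the index set $j\in\{1,\ldots,n-1\}$ suffices rather than requiring separate cosine and sine enumerations.
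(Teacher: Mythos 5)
Your proof is correct and follows the same underlying route as the paper, which simply outsources every step to references (the general spectral decomposition and parts (i) and (ii) to results in the cited graph-splitting paper, and part (iii) to a spectral theorem for circulant matrices combined with the identity $\cos x+\sin x=\sqrt{2}\cos(x-\pi/4)$); you merely supply the standard details directly, and your verifications of the rank/kernel claims, the Moore--Penrose formula via $Z^\dagger=(Z^*Z)^{-1}Z^*$ with $Z^*Z=\operatorname{Diag}(\lambda_1,\ldots,\lambda_{n-1})$, and the telescoping computation for the complete graph all check out. One small imprecision worth noting in (iii): when $j+l=n$ the exponential sum $\sum_{i=1}^n e^{2\pi\mathrm{i}(i-1)(j+l)/n}$ equals $n$ rather than vanishing, but only its imaginary part enters the $\sin(a+b)$ contribution to $\langle v_j,v_l\rangle$, and that imaginary part is zero, so the orthonormality conclusion still holds.
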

\begin{proof}
For the first part, see~\cite[Proposition 2.16]{graph-fb}. The expression of $Z^\dagger$ follows from \cref{rem: Moore--Penrose inverse}.
\ref{fact: Zdecom_i}: See~\cite[Remark~2.17]{graph-fb}.
\ref{fact: Zdecom_iii}: This is a consequence of \cref{eq: Zeig}, \cite[Theorem~14.5.10]{Garcia2023} and the trigonometric identity $\cos(x)+\sin(x)=\sqrt{2}\cos\left(x-\frac{\pi}{4}\right)$. \ref{fact: Zdecom_ii}: See~\cite[Proposition A.2]{graph-fb}.
\end{proof}

\begin{proposition}\label{prop: ker Z ran Z}
Let $G$ be an algorithmic graph of order $n$ and let $Z\in\R^{n\times(n-1)}$ be an onto decomposition of its Laplacian satisfying~\cref{eq:Zdecom}. Then, the linear and continuous operator $\ot{Z}:\Hi^{n-1}\to\Hi^n$ given by~\cref{eq: KronMatrix} satisfies
\begin{equation}
\ran \ot{Z}=\Delta_n^\perp=\left\{\bm{w}\in\Hi^n\mid w_1+\cdots+w_n=0\right\}.\label{eq:Zran}
\end{equation}
Consequently, $\ran \ot{Z}$ is closed and it holds
\begin{equation*}
\ker \ot{Z^*} =\Delta_n=\left\{\ot{(\bm{1}_n)} u \mid u\in\Hi\right\}.
\end{equation*}
\end{proposition}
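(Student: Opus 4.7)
The plan is to reduce everything to finite-dimensional facts about $Z$ and $Z^*$, and then lift to the Hilbert-valued setting via the Kronecker structure. First I would characterize $\Delta_n^\perp$ explicitly: since $\Delta_n=\ot{(\bm{1}_n)}\Hi$, a direct pairing shows that $\bm{w}\in\Delta_n^\perp$ iff $\bigl\langle\sum_i w_i,u\bigr\rangle=0$ for every $u\in\Hi$, which is equivalent to $w_1+\cdots+w_n=0$. This gives the explicit form of $\Delta_n^\perp$ claimed in \cref{eq:Zran}.

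Next I would establish the inclusion $\ran\ot{Z}\subseteq\Delta_n^\perp$. From \cref{fact: Zdecom} we know $\ker Z^*=\operatorname{span}\{\bm{1}_n\}$, so $Z^*\bm{1}_n=0$, which is to say that every column of $Z$ sums to zero. Expanding via \cref{eq: KronMatrix}, for any $\bm{x}\in\Hi^{n-1}$,
\[\sum_{i=1}^n(\ot{Z}\bm{x})_i=\sum_{j=1}^{n-1}\Bigl(\sum_{i=1}^n Z_{i,j}\Bigr)x_j=0.\]

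For the reverse inclusion, I would exploit that $\rk Z=n-1$ means $Z$ has full column rank, so by \cref{rem: Moore--Penrose inverse} one has $(\ot{Z})^\dagger=\ot{(Z^\dagger)}$ and $ZZ^\dagger=P_{\ran Z}=P_{\operatorname{span}\{\bm{1}_n\}^\perp}=I_n-\tfrac{1}{n}\bm{1}_n\bm{1}_n^*$. Given $\bm{w}\in\Delta_n^\perp$, setting $\bm{x}:=\ot{(Z^\dagger)}\bm{w}$ produces $\ot{Z}\bm{x}=\ot{(ZZ^\dagger)}\bm{w}=\bm{w}-\tfrac{1}{n}\ot{(\bm{1}_n\bm{1}_n^*)}\bm{w}$, and the subtracted vector has every component equal to $\tfrac{1}{n}\sum_j w_j=0$. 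Hence $\bm{w}\in\ran\ot{Z}$, establishing the equality $\ran\ot{Z}=\Delta_n^\perp$.

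Closedness and the kernel formula are then immediate consequences. The diagonal $\Delta_n$ is closed because convergence in $\Hi^n$ is coordinatewise, so $\Delta_n^\perp=\ran\ot{Z}$ is closed as an orthogonal complement; the duality $\ker\ot{Z^*}=(\ran\ot{Z})^\perp=(\Delta_n^\perp)^\perp=\Delta_n$ finishes the proof. The only mildly delicate point is the Moore--Penrose manipulation in the reverse inclusion, but the required identities for Kronecker products are collected in \cref{rem: Moore--Penrose inverse} and just after \cref{eq: KronMatrix}, so no genuine obstacle arises.
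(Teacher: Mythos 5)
Your proof is correct, but it takes a different route from the paper's. The paper obtains \cref{eq:Zran} in one stroke from the tensor-product identities $\ran(\ot{Z})=\ran(Z)\otimes\ran(\Id_\Hi)$ and $(S\otimes\Hi)^\perp=S^\perp\otimes\Hi$ (citing Greub), combined with $\ran(Z)=\operatorname{span}\{\bm{1}_n\}^\perp$ from \cref{fact: Zdecom}; the remaining assertions then follow exactly as in your last paragraph. You instead argue by double inclusion: the forward inclusion from the fact that each column of $Z$ sums to zero (i.e.\ $Z^*\bm{1}_n=0$), and the reverse inclusion constructively, by exhibiting the explicit preimage $\ot{(Z^\dagger)}\bm{w}$ and using $ZZ^\dagger=P_{\ran Z}=I_n-\tfrac{1}{n}\bm{1}_n\bm{1}_n^*$ together with the Kronecker multiplication rule stated after \cref{eq: KronMatrix}. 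Your version is more elementary and self-contained --- it avoids importing the infinite-dimensional tensor-product facts and only needs finite-dimensional linear algebra plus \cref{rem: Moore--Penrose inverse} --- at the cost of being somewhat longer; the paper's version is shorter but rests on external machinery. One cosmetic remark: $\Delta_n^\perp$ is closed simply because every orthogonal complement is closed, so the closedness of $\Delta_n$ is not needed there; it is, however, exactly what justifies $(\Delta_n^\perp)^\perp=\Delta_n$ in your final duality step, so your invocation of it is in the right spirit even if attached to the wrong clause.
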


\begin{proof}
We know from \Cref{fact: Zdecom} that $\ran(Z)=\ker(Z^*)^\perp=\operatorname{span}\{\bm{1}_n\}^\perp$. Hence, using \cite[Equation (1.11) and Exercise 2(b) pp. 35]{greub}, we compute
\begin{align*}
\ran(\ot Z)&=\ran(Z)\otimes\ran(\Id_\Hi)=\operatorname{span}\{\bm{1}_n\}^\perp\otimes\Hi\\ &=\left(\operatorname{span}\{\bm{1}_n\}\otimes\Hi\right)^\perp=\left(\ran(\bm{1}_n)\otimes\ran(\Id_\Hi)\right)^\perp= \ran(\ot{(\bm{1}_n)})^\perp=\Delta_n^\perp,
\end{align*}
which proves the relation in \cref{eq:Zran}. The remaining assertions are direct consequences of the closedness of $\Delta_n^\perp$ and \cite[Fact~2.25(v)]{bauschke}.
\end{proof}

\section{On the graph splitting framework} \label{sec: 3}

In this section, we provide an explicit expression of the operators given in \cref{eq: JMA,eq: JCA} when they are considered in the context of graph splitting methods. This will allow us to study their sets of fixed points, which will be instrumental in our subsequent analysis of the limit points of the algorithms. The design of both operators relies on the following assumptions.

\begin{assumption}\label{as: graphs} Let $A_1,\ldots,A_n:\Hi\tto\Hi$ be some given maximally monotone operators with $\zer(\sum_{i=1}^nA_i)\neq\emptyset$. Let $(G, G')$ be a pair of graphs of order $n\geq 2$ satisfying the following:\smallskip

    \begin{enumerate}[leftmargin=1.8cm,label=(AG\arabic*)]
        \item\label{AG1} $G=(\mathcal{N},\mathcal{E})$ is an algorithmic graph.
        \item\label{AG2} $G'=(\mathcal{N},\mathcal{E}')$ is a connected spanning subgraph of $G$.
    \end{enumerate}
\end{assumption}
Throughout, we suppose that \cref{as: graphs} holds. We denote the degrees of the graph $G$ by $d_i>0$, given by~\cref{eq: degree}, and those of the subgraph $G'$ by $d_i'>0$, for $i\in\mathcal{N}$.

\subsection{Description of the operators and the algorithms}

In what follows, we make use of the operators $\mathcal{A}$ and $\mathcal{M}$, introduced in \cite{graph-drs}.

\paragraph{The preconditioner $\mathcal{M}$:}

Let $Z\in\mathbb{R}^{n\times (n-1)}$ be an onto decomposition of $\operatorname{Lap}(G')$. We define the preconditioner $\mathcal{M}:\Hi^{2n-1}\to \Hi^{2n-1}$ and its onto decomposition $\mathcal{M}=\mathcal{C}\mathcal{C}^\ast$, with $\mathcal{C}:\Hi^{n-1}\to \Hi^{2n-1}$, as
\begin{equation}\label{eq: M}
\mathcal{M}:=\ot{\begin{bmatrix}
    \Lap(G') &  Z \\  Z^\ast & I_{n-1}
\end{bmatrix}}\quad\text{and}\quad \mathcal{C}:=\ot{\begin{bmatrix}
    Z \\ I_{n-1}
\end{bmatrix}}.
\end{equation}

\paragraph{The operator $\mathcal{A}$:}

We define $\mathcal{A}:\Hi^{2n-1}\rightrightarrows \Hi^{2n-1}$ as
\begin{equation*}
    \mathcal{A}:=\begin{bmatrix}
        \mathcal{A}_D+\ot{P(G)}-\ot{\Lap(G')} & -\ot{Z} \\  \ot{Z^\ast} & \ot{\left(\bm{0}_{(n-1)\times(n-1)}\right)}
\end{bmatrix},
\end{equation*}
where $\mathcal{A}_D:=\Diag(A_1,\ldots,A_n)$, that is,
$$\mathcal{A}_D(\bm{x})=A_1(x_1)\times\cdots\times A_n(x_n),\quad\forall \bm{x}=(x_1,\ldots,x_n)\in\Hi^n,$$
and $P(G)\in\R^{n\times n}$ is given componentwise by
$$
P(G)_{i,j}=\begin{cases}
d_i&\text{if }i=j,\\
-2 &\text{if }(j,i)\in\E,\\
0 &\text{otherwise}.
\end{cases}$$
\begin{remark}\label{rem: A}
Although the definition of the first block element of $\mathcal{A}$ appears to be different from that of~\cite{graph-drs,graph-fb}, some simple arithmetic operations show that this operator is the same. Unlike these works, we consider a stepsize $\tau$ equal to $1$ for simplicity (the general case with $\tau>0$ can be subsumed to this case by considering the operators $\tau A_i$). Moreover, by~\cite[Theorem~3.1]{graph-drs}, the operator $\mathcal{A}$ is maximally monotone and one has $(\bm{x},\bm{v})\in\zer\mathcal{A}$ if and only if $x:=x_1=\cdots=x_n$ solves~\cref{eq: P}.
\end{remark}

\paragraph{The operators $\mathcal{T}$ and $\widetilde{\mathcal{T}}$:} We consider $\mathcal{T}:\Hi^{2n-1}\to\Hi^{2n-1}$ and $\widetilde{\mathcal{T}}:\Hi^{n-1}\to\Hi^{n-1}$ given by
\begin{align}
    \mathcal{T}&:=J_{\mathcal{M}^{-1}\mathcal{A}}=(\mathcal{M}+\mathcal{A})^{-1}\mathcal{M},\label{eq: T}\\
    \widetilde{\mathcal{T}}&:=J_{\mathcal{C}^*\rhd\mathcal{A}}=\mathcal{C}^*(\mathcal{M}+\mathcal{A})^{-1}\mathcal{C}\label{eq: T tilde}.
\end{align}

We begin our study by providing an explicit description of the operator $(\mathcal{M}+\mathcal{A})^{-1}$, which will be useful for deriving explicit expressions for the operators $\mathcal{T}$ and $\widetilde{\mathcal{T}}$. The first assertion in the next result was observed in the proof of~\cite[Theorem~3.2]{graph-drs}.

\begin{lemma}\label{lem:invMA}
The mapping $(\mathcal{M}+\mathcal{A})^{-1}:\Hi^{2n-1}\to\Hi^{2n-1}$ is single-valued and Lipschitz. Specifically, for any $\bm{w}\in\Hi^n$ and $\bm{v}\in\Hi^{n-1}$ one has
$$(\mathcal{M}+\mathcal{A})^{-1}\begin{bmatrix} \bm{w} \\ \bm{v}\end{bmatrix}=\begin{bmatrix}
\bm{x} \\ \bm{v}-2\ot{Z^*}\bm{x}\end{bmatrix},$$
where $\bm{x}\in\Hi^{n}$  can be explicitly computed as
\begin{equation}\label{eq: M+A inverse}
            x_i = J_{\frac{1}{d_i}A_i}\bigg(\frac{w_i}{d_i} +\frac{2}{d_i}\sum_{(h,i)\in\mathcal{E}}x_h\bigg),\quad i= 1,\ldots,n.
\end{equation}
\end{lemma}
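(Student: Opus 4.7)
The plan is to compute the block operator $\mathcal{M}+\mathcal{A}$ explicitly, observe that its structure is essentially block lower-triangular, and then exploit the ordering imposed by $G$ being algorithmic (i.e., $(i,j)\in\E\Rightarrow i<j$) to invert it by scalar-wise recursion.

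First, I would add the defining matrices of $\mathcal{M}$ and $\mathcal{A}$ blockwise. The $\ot{\Lap(G')}$ terms cancel in the top-left, the $\pm\ot{Z}$ terms cancel in the top-right, and the two copies of $\ot{Z^*}$ add in the bottom-left, yielding
\[
\mathcal{M}+\mathcal{A}=\begin{bmatrix} \mathcal{A}_D+\ot{P(G)} & 0 \\ 2\ot{Z^*} & \Id_{\Hi^{n-1}} \end{bmatrix}.
\]
Given $(\bm{w},\bm{v})\in\Hi^n\times\Hi^{n-1}$, finding $(\bm{x},\bm{y})\in(\mathcal{M}+\mathcal{A})^{-1}(\bm{w},\bm{v})$ thus amounts to the decoupled conditions $\bm{w}\in (\mathcal{A}_D+\ot{P(G)})(\bm{x})$ and $\bm{y}=\bm{v}-2\ot{Z^*}\bm{x}$, so the $\bm{y}$-component is automatically determined once $\bm{x}$ is.

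Next I would analyze the inclusion $\bm{w}\in (\mathcal{A}_D+\ot{P(G)})(\bm{x})$ componentwise. Using the definition of $P(G)$, the $i$-th row reads $w_i\in A_i(x_i)+d_ix_i-2\sum_{(j,i)\in\E}x_j$, which rearranges to
\[
\frac{w_i}{d_i}+\frac{2}{d_i}\sum_{(h,i)\in\E}x_h \in \Big(\Id+\tfrac{1}{d_i}A_i\Big)(x_i),
\]
equivalently $x_i=J_{\frac{1}{d_i}A_i}\!\big(\tfrac{w_i}{d_i}+\tfrac{2}{d_i}\sum_{(h,i)\in\E}x_h\big)$, which is exactly~\cref{eq: M+A inverse}. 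The key point is that by~\ref{AG1}, every edge $(h,i)\in\E$ satisfies $h<i$, so the right-hand side depends only on $x_1,\ldots,x_{i-1}$. Thus the recursion is well posed: $x_1$ is computed first (the sum is empty), then $x_2$, and so on. Since each $\frac{1}{d_i}A_i$ is maximally monotone (as $d_i>0$), \cref{lem: single-valued full domain resolvent} yields that each $J_{\frac{1}{d_i}A_i}$ is single-valued with full domain, so each $x_i$ is well-defined and unique; this establishes single-valuedness of $(\mathcal{M}+\mathcal{A})^{-1}$.

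Finally, for the Lipschitz property I would argue inductively. Each resolvent $J_{\frac{1}{d_i}A_i}$ is firmly nonexpansive, in particular $1$-Lipschitz. Supposing $(w_h,v_h)\mapsto x_h$ is Lipschitz for $h<i$, the map $(\bm{w},\bm{v})\mapsto \tfrac{w_i}{d_i}+\tfrac{2}{d_i}\sum_{(h,i)\in\E}x_h$ is a finite linear combination of Lipschitz maps, hence Lipschitz, so post-composing with the $1$-Lipschitz resolvent makes $x_i$ Lipschitz in $(\bm{w},\bm{v})$. Boundedness of $\ot{Z^*}$ then transfers the Lipschitz property to $\bm{y}=\bm{v}-2\ot{Z^*}\bm{x}$, completing the proof.

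The only mildly delicate point is verifying the cancellations that reduce $\mathcal{M}+\mathcal{A}$ to the displayed block lower-triangular form; everything else follows by recursive substitution and standard resolvent properties.
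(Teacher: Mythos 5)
Your proposal is correct and follows essentially the same route as the paper: compute $\mathcal{M}+\mathcal{A}$ to see the block lower-triangular cancellations, resolve the inclusion componentwise via the definition of $P(G)$, and use the edge ordering $(h,i)\in\mathcal{E}\Rightarrow h<i$ to make the resolvent recursion well posed. The only difference is that you spell out the single-valuedness and Lipschitz arguments (via Minty's theorem and firm nonexpansiveness of the resolvents), which the paper instead defers to the proof of Theorem~3.2 in the cited reference.
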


\begin{proof}
For any $\bm{x},\bm{w}\in\Hi^n$ and $\bm{v},\bm{y}\in\Hi^{n-1}$, one has
\[\begin{bmatrix}
    \bm{x} \\ \bm{y}
\end{bmatrix}\in(\mathcal{M}+\mathcal{A})^{-1}\begin{bmatrix}
    \bm{w} \\ \bm{v}
\end{bmatrix}\Leftrightarrow\begin{bmatrix}
    \bm{w} \\ \bm{v}
\end{bmatrix}\in \begin{bmatrix}
    \mathcal{A}_D+\ot{P(G)} & \ot{(\bm{0}_{n\times(n-1)})} \\
    2 \ot{Z^\ast} & \Id_{\Hi^{n-1}}
    \end{bmatrix}\begin{bmatrix}
        \bm{x} \\ \bm{y}
    \end{bmatrix}=\begin{bmatrix}
        (\mathcal{A}_D+\ot{P(G)})(\bm{x}) \\ 2 \ot{Z^\ast}\bm{x}+\bm{y}
    \end{bmatrix}.\]
Finally, by the definition of $P(G)$, the $i$-th equation of the previous system is
\[w_i\in\big(\left(\mathcal{A}_D+\ot{P(G)}\right)(\bm{x})\big)_i= A_i (x_i) + d_i x_i - 2 \sum_{(h,i)\in\mathcal{E}}x_h=d_i\left(\Id_\Hi + \frac{1}{d_i} A_i\right) (x_i) - 2 \sum_{(h,i)\in\mathcal{E}}x_h.\]
Solving for $x_i$ in the previous equation gives the general formula~\cref{eq: M+A inverse}. The assertion regarding the explicit computation of $\bm{x}$ is a consequence of~\ref{AG1} in \cref{as: graphs} and~\cref{eq: M+A inverse}, since $(h,i)\in\mathcal{E}$ implies $h<i$.
\end{proof}

\begin{proposition}[Operator $\mathcal{T}$]\label{prop: operator T}
    Let $\mathcal{T}:\Hi^{2n-1}\to\Hi^{2n-1}$ be the operator defined in~\cref{eq: T}. Then, for any $\bm{w}\in\Hi^n$ and $\bm{v}\in\Hi^{n-1}$ one has
    $$        \mathcal{T}\begin{bmatrix}
            \bm{w} \\ \bm{v}
        \end{bmatrix}=\begin{bmatrix}
            \bm{x} \\ \bm{v}+ \ot{Z^\ast}(\bm{w}-2\bm{x})
        \end{bmatrix},$$
    where $\bm{x}\in\Hi^n$ can be explicitly computed as
    \begin{equation*}
        x_i= J_{\frac{1}{d_i}A_i}\bigg(\frac{d'_i}{d_i} w_i - \frac{1}{d_i}\sum_{(i,h)\in\mathcal{E}'}w_h - \frac{1}{d_i}\sum_{(h,i)\in\mathcal{E}'}w_h+ \frac{1}{d_i}( \ot{Z}\bm{v})_i+\frac{2}{d_i}\sum_{(h,i)\in\mathcal{E}}x_h\bigg),\quad i=1,\ldots,n.
    \end{equation*}
\end{proposition}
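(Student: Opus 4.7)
The plan is to exploit the factorization $\mathcal{T}=(\mathcal{M}+\mathcal{A})^{-1}\mathcal{M}$ and compose the two maps explicitly. First, using the block structure in~\cref{eq: M}, I would compute
\begin{equation*}
\mathcal{M}\begin{bmatrix}\bm{w}\\ \bm{v}\end{bmatrix}=\begin{bmatrix}\ot{\Lap(G')}\bm{w}+\ot{Z}\bm{v}\\ \ot{Z^\ast}\bm{w}+\bm{v}\end{bmatrix}.
\end{equation*}

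Next, I would feed this vector into the explicit formula for $(\mathcal{M}+\mathcal{A})^{-1}$ supplied by~\cref{lem:invMA}. The second block component follows at once as $\ot{Z^\ast}\bm{w}+\bm{v}-2\ot{Z^\ast}\bm{x}=\bm{v}+\ot{Z^\ast}(\bm{w}-2\bm{x})$, which is exactly the governing part in the claimed expression. The first block $\bm{x}\in\Hi^n$ is determined componentwise from~\cref{eq: M+A inverse} with the input $\bm{w}'=\ot{\Lap(G')}\bm{w}+\ot{Z}\bm{v}$, so that $x_i=J_{\frac{1}{d_i}A_i}\bigl(\tfrac{w'_i}{d_i}+\tfrac{2}{d_i}\sum_{(h,i)\in\mathcal{E}}x_h\bigr)$.

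The remaining step is to expand $w'_i$ using the componentwise description of the Laplacian in \cref{def: incidence matrix}. Since
\begin{equation*}
(\ot{\Lap(G')}\bm{w})_i=d'_i w_i-\sum_{(i,h)\in\mathcal{E}'}w_h-\sum_{(h,i)\in\mathcal{E}'}w_h,
\end{equation*}
where the adjacency sum has been split according to whether an incident edge of $G'$ leaves or enters node $i$, substituting this together with $(\ot{Z}\bm{v})_i$ into the formula for $x_i$ and dividing through by $d_i$ yields the formula stated in the proposition.

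The whole argument is essentially a careful substitution, so I do not expect any genuine obstacle. The most delicate point is bookkeeping: one must keep straight the two families of edge sums appearing in the final formula, those over $\mathcal{E}$ coming from the resolvent update in~\cref{lem:invMA}, and those over $\mathcal{E}'$ coming from expanding the Laplacian of the spanning subgraph $G'$.
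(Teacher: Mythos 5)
Your proposal is correct and follows exactly the route of the paper, whose proof is the one-line remark that the description follows by combining \cref{lem:invMA} with \cref{eq: M}; you simply spell out the substitution (applying $\mathcal{M}$ blockwise, feeding the result into \cref{eq: M+A inverse}, and expanding $(\ot{\Lap(G')}\bm{w})_i$ via the componentwise description of the Laplacian). All steps check out, including the clean split of the adjacency sum over $\mathcal{E}'$ into outgoing and incoming edges.
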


\begin{proof}
Combining \cref{lem:invMA} with \cref{eq: M} provides the description of the operator.
\end{proof}

\begin{proposition}[Operator $\widetilde{\mathcal{T}}$]\label{prop: operator T tilde}
    Let $\widetilde{\mathcal{T}}:\Hi^{n-1}\to\Hi^{n-1}$ be the operator defined in~\cref{eq: T tilde}. Then, for any $\bm{v}\in\Hi^{n-1}$, one has
    \begin{equation}\label{eq: operator T tilde}
    \widetilde{\mathcal{T}}(\bm{v})=\bm{v}-\ot{Z^\ast}\bm{x},\text{ where }x_i = J_{\frac{1}{d_i}A_i}\bigg(\frac{1}{d_i}( \ot{Z}\bm{v})_i + \frac{2}{d_i}\sum_{(h,i)\in\mathcal{E}}x_h\bigg),\quad i= 1,\ldots,n.
    \end{equation}
\end{proposition}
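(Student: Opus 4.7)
The plan is to directly evaluate $\widetilde{\mathcal{T}}(\bm{v})=\mathcal{C}^\ast(\mathcal{M}+\mathcal{A})^{-1}\mathcal{C}\bm{v}$ by composing the three operators in order, using the explicit block form of $\mathcal{C}$ from~\cref{eq: M} together with the closed-form description of $(\mathcal{M}+\mathcal{A})^{-1}$ supplied by \cref{lem:invMA}. Since \cref{lem:invMA} has already done the substantive work (namely, inverting $\mathcal{M}+\mathcal{A}$ and extracting the nontrivial resolvent-type recursion for $\bm{x}$), the remaining proof amounts to a clean unwinding of the block products.

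First I would compute $\mathcal{C}\bm{v}$. From~\cref{eq: M}, $\mathcal{C}=\ot{\begin{bmatrix} Z\\ I_{n-1}\end{bmatrix}}$, so
$$
\mathcal{C}\bm{v}=\begin{bmatrix}\ot{Z}\bm{v} \\ \bm{v}\end{bmatrix}\in\Hi^{n}\times\Hi^{n-1}.
$$
Next, I would apply \cref{lem:invMA} with the choice $\bm{w}:=\ot{Z}\bm{v}\in\Hi^n$ (the second block input is $\bm{v}$), which yields
$$
(\mathcal{M}+\mathcal{A})^{-1}\mathcal{C}\bm{v}=\begin{bmatrix}\bm{x} \\ \bm{v}-2\ot{Z^\ast}\bm{x}\end{bmatrix},
$$
where the components of $\bm{x}$ satisfy the implicit recursion $x_i=J_{\frac{1}{d_i}A_i}\!\big(\frac{1}{d_i}(\ot{Z}\bm{v})_i+\frac{2}{d_i}\sum_{(h,i)\in\mathcal{E}}x_h\big)$ for $i=1,\ldots,n$. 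This is exactly the formula claimed in~\cref{eq: operator T tilde}.

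Finally I would apply $\mathcal{C}^\ast=\ot{[Z^\ast\ I_{n-1}]}:\Hi^n\times\Hi^{n-1}\to\Hi^{n-1}$ to the output of the previous step:
$$
\widetilde{\mathcal{T}}(\bm{v})=\mathcal{C}^\ast\begin{bmatrix}\bm{x}\\ \bm{v}-2\ot{Z^\ast}\bm{x}\end{bmatrix}=\ot{Z^\ast}\bm{x}+\bm{v}-2\ot{Z^\ast}\bm{x}=\bm{v}-\ot{Z^\ast}\bm{x},
$$
which is the stated expression. No real obstacle arises: the only thing to be careful about is the correct transcription of the block structure of $\mathcal{C}$ (in particular, that $\mathcal{C}^\ast$ produces the sum $\ot{Z^\ast}\bm{x}+\bm{y}$ when applied to $(\bm{x},\bm{y})$), after which the two terms involving $\ot{Z^\ast}\bm{x}$ combine to give the minus sign in the final formula.
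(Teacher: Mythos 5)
Your proposal is correct and follows exactly the paper's own argument: both evaluate $\mathcal{C}\bm{v}=(\ot{Z}\bm{v},\bm{v})$, apply \cref{lem:invMA} with $\bm{w}=\ot{Z}\bm{v}$, and then apply $\mathcal{C}^\ast$ so that the two $\ot{Z^\ast}\bm{x}$ terms combine into $\bm{v}-\ot{Z^\ast}\bm{x}$. No discrepancies to report.
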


\begin{proof}
We have $\widetilde{\mathcal{T}}(\bm{v})=\begin{bmatrix} \ot{Z^\ast} & \Id_{\Hi^{n-1}}\end{bmatrix}(\mathcal{M}+\mathcal{A})^{-1}\begin{bsmallmatrix} \ot Z\bm{v} \\ \bm{v}\end{bsmallmatrix}$. Then, using \cref{lem:invMA}, we deduce $\widetilde{\mathcal{T}}(\bm{v})=\ot Z^*\bm{x}+\bm{v}-2\ot Z^*\bm{x}$, with $x_i$ given by~\cref{eq: operator T tilde},  as claimed.
\end{proof}

Using the expression provided by \cref{prop: operator T} and \cref{eq: algorithm T}, we obtain the iterative scheme presented in \cref{alg:T}, whose convergence is guaranteed by \cref{fact: proj}.

\begin{algorithm}[ht!]
    \caption{Graph splitting method induced by ${\mathcal{T}}$}\label{alg:T}
    \begin{algorithmic}[1]
        \State \textbf{let:} $w_1^0,\ldots,w_{n}^0,v_1^0,\ldots,v_{n-1}^0\in\mathcal{\Hi}$ and $\theta_k \in {[0, 2]}$ with $\sum_{k=0}^{\infty}\theta_k\left( 2 -\theta_k\right) = +\infty$.
        \For {$k=0,1,2,\ldots$}
        \State {\small$\displaystyle x_i^{k+1}= J_{\frac{1}{d_i}A_i}\bigg(\frac{d'_i}{d_i} w_i^{k} - \frac{1}{d_i}\sum_{(i,h)\in\mathcal{E}'}w_h^{k} - \frac{1}{d_i}\sum_{(h,i)\in\mathcal{E}'}w_h^{k}+ \frac{1}{d_i}\sum_{j=1}^{n-1}Z_{i,j}v^k_j+\frac{2}{d_i}\sum_{(h,i)\in\mathcal{E}}x_h^{k+1}\bigg)$}, \hfill{$i= 1,\ldots,n$}
        \State {$w_i^{k+1}=(1-\theta_k)w_i^k+\theta_k x_i^{k+1}$}, \hfill $i=1,\ldots,n$
        \State {$v_i^{k+1}=v_i^k+\theta_k\sum_{j=1}^n Z_{j,i}\left(w_j^{k}-2x_j^{k+1}\right)$}, \hfill{$i=1,\ldots, n-1$}
        \EndFor
    \end{algorithmic}
\end{algorithm}

One can observe in \cref{prop: operator T tilde} that $\widetilde{\mathcal{T}}$ is the exact operator used for defining~\cite[Algorithm~3.1]{graph-drs}, which is presented below in \cref{alg:Ttilde}. Under the graph splitting framework, $\widetilde{\mathcal{T}}$ can always be used because the preconditioner $\mathcal{M}$ defined in~\cref{eq: M} has an onto decomposition, since the Laplacian matrix $\operatorname{Lap}(G')$ always admits one (recall \cref{fact: Zdecom}). As shown in~\cite{graph-drs,graph-fb}, many algorithms can be described in terms of graphs.

\begin{algorithm}[ht!]
    \caption{Graph splitting method induced by $\widetilde{\mathcal{T}}$}\label{alg:Ttilde}
    \begin{algorithmic}[1]
        \State \textbf{let:} $v_1^0,\ldots,v_{n-1}^0\in\mathcal{\Hi}$ and $\theta_k \in {[0, 2]}$ with $\sum_{k=0}^{\infty}\theta_k\left( 2 -\theta_k\right) = +\infty$.
        \For {$k=0,1,2,\ldots$}
        \State {$x_i^{k+1}=J_{\frac{1}{d_i}A_i}\left(\frac{2}{d_i}\sum_{(h,i)\in\mathcal{E}}x_h^{k+1}+\frac{1}{d_i}\sum_{j=1}^{n-1}Z_{i,j}v^k_j\right)$}, \hfill{$i= 1,\ldots,n$}
        \State {$v_i^{k+1}=v_i^k-\theta_k\sum_{j=1}^n Z_{j,i}x_j^{k+1}$}, \hfill{$i=1,\ldots,n-1$}
        \EndFor
    \end{algorithmic}
\end{algorithm}

Note that \cref{alg:T} generates two sequences of governing variables, namely $(\bm{w}^k)_{k\in\N}$ in $\Hi^n$ and $(\bm{v}^k)_{k\in\N}$ in $\Hi^{n-1}$, while \cref{alg:Ttilde} only produces one governing sequence $(\bm{v}^k)_{k\in\N}$ in $\Hi^{n-1}$. In the particular case where $\theta_k=1$, we observe that the governing sequence $(\bm{w}^k)_{k\in\N}$ generated by \cref{alg:T} coincides with the shadow sequence $(\bm{x}^k)_{k\in\N}$. Even so, the ambient space of Algorithm 1 is always $\Hi^{2n-1}$, as the computation of the shadow variables in line~3 requires the storage of both $\bm{w}^k$ and $\bm{v}^k$.

\subsection{Computing fixed points}

According to~\cref{eq: zeros fixed}, \cref{rem: A} and \cref{fact: Fix T bijection}, the solutions to problem~\cref{eq: P} are characterized by the sets of fixed points of the operators $\mathcal{T}$ and $\widetilde{\mathcal{T}}$. In order to describe these sets, we define the operator $\mathcal{A}_n:=\mathcal{A}_D\ot{(\bm{1}_n)}$, i.e.,
\[\mathcal{A}_n(x)=A_1(x)\times\cdots\times A_n(x),\quad \forall x\in\Hi.\]
It will also be useful to define the following vector.

\begin{definition} Given an algorithmic graph $G$, we define the \emph{degree balance} as the vector $\bm{\delta}=(\delta_1,\ldots,\delta_n)\in\R^n$ given by
\begin{equation*}
    \delta_i:=d_i^\text{out}-d_i^\text{in}, \quad \text{for all } i=1,\ldots,n,
\end{equation*}
where $d_i^\text{in}$ and $d_i^\text{out}$ are the in- and out-degrees of node $i$.
\end{definition}
Notice that $\delta_i=d_i-2d_i^\text{in}$. Also, one has $\delta_1=d_1>0$ and $\delta_n=-d_n<0$, since $d_1^\text{in}=d_n^\text{out}=0$. Not only that, but also $\sum_{i=1}^n\delta_i=\sum_{i=1}^n d_i^\text{out}-\sum_{i=1}^n d_i^\text{in}=0$, so $\ran\ot{\bm{\delta}}=\ot{\bm{\delta}}(\Hi)\subseteq\Delta_n^\perp$.

\begin{lemma}[Fixed points]\label{lem: Fix T general}
    The fixed points of the operators $\mathcal{T}$ and $\widetilde{\mathcal{T}}$ defined in~\cref{eq: T} and~\cref{eq: T tilde} are given by:
    \begin{align*}
        \Fix \mathcal{T}&=\left\{(\ot{(\bm{1}_n)}x,\bm{v})\in\Hi^{2n-1}\mid \ot Z\bm{v} \in \mathcal{A}_n (x) + \ot{\bm{\delta}}x\right\},\\
        \Fix \widetilde{\mathcal{T}}&=\left\{\bm{v}\in\Hi^{n-1}\mid \ot Z\bm{v} \in \mathcal{A}_n(x) + \ot{\bm{\delta}}x, \text{ with }x\in\Hi\right\},
    \end{align*}
    where $\bm{\delta}\in\R^n$ is the degree balance of $G$. What is more, for all $\bm{v}\in\Hi^{n-1}$, the inclusion $\ot Z\bm{v} \in \mathcal{A}_n(x) + \ot{\bm{\delta}}x$ has a unique solution given by
    \begin{equation}\label{eq: x from v}
      x_{\bm{v}}=J_{\frac{1}{d_1}A_1}\left(\frac{(\ot Z\bm{v})_1}{d_1}\right),
    \end{equation}
    which implies $x_{\bm{v}}\in\zer(\sum_{i=1}^n A_i)$.
\end{lemma}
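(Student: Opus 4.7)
My plan is to compute $\Fix\mathcal{T}$ directly from \crefpart{prop: operator T}{prop: operator T} and then derive $\Fix\widetilde{\mathcal{T}}$ either by the same method applied to \crefpart{prop: operator T tilde}{prop: operator T tilde} or, more cheaply, by pushing $\Fix\mathcal{T}$ through the bijection of \crefpart{fact: Fix T bijection}{fact: Fix T bijection}, namely $\mathcal{C}^*|_{\Fix\mathcal{T}}\colon\Fix\mathcal{T}\to\Fix\widetilde{\mathcal{T}}$. Since $\mathcal{C}^*=\ot{[Z^*\;I_{n-1}]}$ and $\Delta_n=\ker\ot{Z^*}$ by \cref{prop: ker Z ran Z}, the first block of any element of $\Fix\mathcal{T}$ is killed by $\mathcal{C}^*$, so the characterization of $\Fix\widetilde{\mathcal{T}}$ drops out of the one for $\Fix\mathcal{T}$ by simply erasing the first component.

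For $\Fix\mathcal{T}$, I write out $\mathcal{T}(\bm{w},\bm{v})=(\bm{w},\bm{v})$ using \cref{prop: operator T}. The second block reads $\ot{Z^*}(\bm{w}-2\bm{x})=0$, which combined with $\bm{x}=\bm{w}$ forces $\ot{Z^*}\bm{w}=0$, so by \cref{prop: ker Z ran Z} we must have $\bm{w}=\ot{(\bm{1}_n)}x$ for some $x\in\Hi$. Substituting $w_h=x$ and $x_h=x$ in the expression for $x_i$ from \cref{prop: operator T}, the subgraph contribution collapses because $d'_i-d'^{\mathrm{in}}_i-d'^{\mathrm{out}}_i=0$, leaving
\[
x=J_{\frac{1}{d_i}A_i}\!\left(\tfrac{1}{d_i}(\ot{Z}\bm{v})_i+\tfrac{2d_i^{\mathrm{in}}}{d_i}x\right).
\]
Clearing the resolvent and multiplying by $d_i$ gives $(\ot{Z}\bm{v})_i\in A_i(x)+(d_i-2d_i^{\mathrm{in}})x=A_i(x)+\delta_i x$, which stacked over $i$ yields $\ot{Z}\bm{v}\in\mathcal{A}_n(x)+\ot{\bm{\delta}}x$. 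Conversely, reversing the chain shows any pair $(\ot{(\bm{1}_n)}x,\bm{v})$ satisfying that inclusion lies in $\Fix\mathcal{T}$.

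For the uniqueness statement, I specialize to $i=1$: since $d_1^{\mathrm{in}}=0$, we have $\delta_1=d_1$, so the first-component inclusion $(\ot{Z}\bm{v})_1\in A_1(x)+d_1 x$ rearranges to $x=J_{\frac{1}{d_1}A_1}\!\left((\ot{Z}\bm{v})_1/d_1\right)=x_{\bm{v}}$. This produces \cref{eq: x from v} and rules out any other solution. Finally, to see $x_{\bm{v}}\in\zer(\sum_i A_i)$, I sum the $n$ inclusions $(\ot{Z}\bm{v})_i\in A_i(x_{\bm{v}})+\delta_i x_{\bm{v}}$; since $\sum_i\delta_i=0$ and since $\ot{Z}\bm{v}\in\ran\ot{Z}=\Delta_n^\perp$ by \cref{prop: ker Z ran Z} (so $\sum_i(\ot{Z}\bm{v})_i=0$), the sum yields $0\in\sum_{i=1}^n A_i(x_{\bm{v}})$.

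I expect no serious obstacle: the whole argument is a careful bookkeeping of $d_i$, $d'_i$, $d_i^{\mathrm{in}}$ and the degree balance $\delta_i$, anchored on two structural facts we have already proved, namely \cref{prop: ker Z ran Z} (which supplies both $\ker\ot{Z^*}=\Delta_n$ and $\ran\ot{Z}=\Delta_n^\perp$) and the explicit formulas for $\mathcal{T}$ and $\widetilde{\mathcal{T}}$ from \cref{prop: operator T,prop: operator T tilde}. The mildly delicate point is recognizing that $i=1$ is the unique index where $\delta_i$ equals $d_i$, which is exactly what lets us invert the inclusion there without inverting any other $A_i$.
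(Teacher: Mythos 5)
Your proposal is correct and follows essentially the same route as the paper's proof: characterize $\Fix\mathcal{T}$ from the explicit formula in \cref{prop: operator T} using $\ker\ot{Z^*}=\Delta_n$ to force $\bm{w}=\ot{(\bm{1}_n)}x$, collapse the subgraph terms to obtain $(\ot{Z}\bm{v})_i\in A_i(x)+\delta_i x$, pass to $\Fix\widetilde{\mathcal{T}}$ via the bijection of \cref{fact: Fix T bijection}, read off uniqueness from the $i=1$ component where $\delta_1=d_1$, and sum the inclusions (equivalently, apply $\ot{(\bm{1}_n^*)}$) to conclude $x_{\bm{v}}\in\zer(\sum_{i=1}^nA_i)$. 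No substantive differences.
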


\begin{proof}
Let us first prove the expression for $\Fix {\mathcal{T}}$. By \cref{prop: operator T}, $(\bm{w},\bm{v})\in\Fix {\mathcal{T}}\Leftrightarrow\bm{w}=\bm{x}$ and $\bm{v}=\bm{v}-\ot Z^*\bm{w}\Leftrightarrow \bm{0}_n=\ot Z^*\bm{x}$, where $\bm{x}=(x_1,\ldots,x_n)$ is given by
\begin{equation*}
        x_i= J_{\frac{1}{d_i}A_i}\bigg(\frac{d'_i}{d_i} x_i - \frac{1}{d_i}\sum_{(i,h)\in\mathcal{E}'}x_h - \frac{1}{d_i}\sum_{(h,i)\in\mathcal{E}'}x_h+ \frac{1}{d_i}(\ot Z\bm{v})_i+\frac{2}{d_i}\sum_{(h,i)\in\mathcal{E}}x_h\bigg),\quad i= 1,\ldots,n.
    \end{equation*}
Since $\ker \ot{Z}^*=\Delta_n$ by
    \cref{prop: ker Z ran Z}, this is equivalent to $x_1=\cdots=x_n=:x$. That is to say, for all $i=1,\ldots,n$, we have that
    \begin{equation*}
        x=J_{\frac{1}{d_i}A_i}\left(\frac{d'_i-{(d'_i)}^\text{out}-{(d'_i)}^\text{in}}{d_i}x+ \frac{1}{d_i}(\ot Z\bm{v})_i+\frac{2d_i^\text{in}}{d_i}x\right)=
        J_{\frac{1}{d_i}A_i}\left(\frac{(\ot Z\bm{v})_i+2d_i^\text{in}x}{d_i}\right),
    \end{equation*}
    which is equivalent to
    $$\frac{(\ot Z\bm{v})_i+2d_i^\text{in}x}{d_i}\in\frac{1}{d_i}A_i(x)+x.$$
    Solving for $(\ot Z\bm{v})_i$, we get that $(\ot Z\bm{v})_i\in A_i(x)+\delta_i x$ for all $i=1,\ldots,n$, i.e., $\ot Z\bm{v}\in \mathcal{A}_n(x)+\ot{\bm{\delta}}x$.

    Now, from \cref{fact: Fix T bijection}, it follows that

    \begin{equation*}
    \Fix \widetilde{\mathcal{T}}= \left\{ \mathcal{C}^\ast\begin{bmatrix}
    \ot{(\bm{1}_n)}x\\ \bm{v}
    \end{bmatrix}= \bm{v} \;\biggl\lvert\;  \ot Z\bm{v} \in \mathcal{A}_n (x) + \ot{\bm{\delta}}x \right\},
    \end{equation*}
    which gives the claimed description of $\Fix \widetilde{\mathcal{T}}$.

    If $\ot Z\bm{v} \in \mathcal{A}_n(x) + \ot{\bm{\delta}}x$ holds, one particularly has
    $$(\ot Z\bm{v})_1\in A_1(x)+\delta_1 x,$$
    which readily gives~\cref{eq: x from v}. To conclude, notice that $\bm{1}_n^*Z=\bm{0}^*_n$, $\bm{1}_n^*\bm{\delta}=0$, and $\ot{(\bm{1}_n^*)}\mathcal{A}_n=\sum_{i=1}^n A_i$. Applying $\ot{(\bm{1}_n^\ast)}$ on both sides of the inclusion $\ot Z\bm{v} \in \mathcal{A}_n(x_{\bm{v}}) + \ot{\bm{\delta}}x_{\bm{v}}$ yields $0\in\sum_{i=1}^n A_i (x_{\bm{v}})$.
\end{proof}

\begin{remark}\label{rem: fixed points}
    We know that the operators $\mathcal{C}^*$ and $(\mathcal{M}+\mathcal{A})^{-1}\mathcal{C}$ bijectively act between $\Fix \mathcal{T}$ and $\Fix \widetilde{\mathcal{T}}$ (recall \cref{fact: Fix T bijection}). In our present graph splitting setting, this can be refined as follows:
    \begin{enumerate}[label=(\roman*)]
        \item If $(\bm{x},\bm{v})\in\Fix \mathcal{T}$, then by \cref{lem: Fix T general}, $\bm{x}=\ot{(\bm{1}_n)}x$ for some $x\in\zer(\sum_{i=1}^n A_i)$ and, consequently, $\mathcal{C}^*(\ot{(\bm{1}_n)}x,\bm{v})=\bm{v}\in\Fix\widetilde{\mathcal{T}}$. From this, we conclude that $\Fix \mathcal{T}\subseteq \Delta_n\times \Fix \widetilde{\mathcal{T}}$. Thus, the operator $\mathcal{C}^*$ can be interpreted as a restriction on the second coordinate of~$\Fix \mathcal{T}$, which is $\Fix \widetilde{\mathcal{T}}$.

        \item\label{rem: MAC} The application of $(\mathcal{M}+\mathcal{A})^{-1}\mathcal{C}$ can be depicted as some kind of parametrization of $\Fix\widetilde{\mathcal{T}}$. Concisely, from \cref{lem: Fix T general}, $\bm{v}\in\Fix \widetilde{\mathcal{T}}$ is equivalent to say that there exists a unique $x_{\bm{v}}\in\zer(\sum_{i=1}^n A_i)$ satisfying the inclusion $\ot Z\bm{v}\in\mathcal{A}_n(x)+\ot{\bm{\delta}}x$. Then, the value of $x$ is parametrized by the choice of $\bm{v}$, and the map $(\mathcal{M}+\mathcal{A})^{-1}\mathcal{C}$ is described as $\bm{v}\mapsto(\bm{1}_nx_{\bm{v}},\bm{v})$.
    \end{enumerate}
\end{remark}

As seen in \cref{lem: Fix T general}, to deduce a closed expression for the sets $\Fix \mathcal{T}$ and $\Fix \widetilde{\mathcal{T}}$, one must solve the inclusion $\ot{Z}\bm{v} \in \mathcal{A}_n(x) + \ot{\bm\delta}x$. In the remainder of this section, we shall show that $\ot{Z^\dagger}$ can be applied to this inclusion to isolate the vector $\bm{v}\in\Hi^{n-1}$, providing a more explicit description of the sets of fixed points. To this end, we introduce the following vector:
\begin{equation}
\bm{\alpha}:={Z^\dagger}\bm{\delta}\in\R^{n-1}.\label{eq: alpha}
\end{equation}

\begin{proposition}\label{prop: v new constraints}
    Let $Z\in\mathbb{R}^{n\times (n-1)}$ be an onto decomposition of $\Lap(G')$ and let $\bm{\delta}\in\R^n$ be the degree balance of $G$. Then:

    \begin{enumerate}[label=(\roman*)]
        \item \label{prop: alpha} The vector $\bm{\alpha}$ is the unique vector satisfying $Z\bm{\alpha}=\bm{\delta}$. Furthermore, $\bm{\alpha}\neq0$.
        \item \label{prop: equiv sol no alpha} For all $x\in\Hi$ and $\bm{e}\in\Hi^{n-1}$, it holds
        \[\ot{Z}\bm{e}\in\mathcal{A}_n(x)\Leftrightarrow \bm{e}\in \ot{Z}^\dagger\left(\mathcal{A}_n(x)\cap \Delta_n^\perp\right).\]
        \item \label{prop: equiv sol} For all $x\in\Hi$ and $\bm{v}\in\Hi^{n-1}$, it holds
    \[\ot{Z}\bm{v}\in\mathcal{A}_n(x)+\ot{\bm{\delta}} x\Leftrightarrow \bm{v}\in \ot{Z}^\dagger\left(\mathcal{A}_n(x)\cap \Delta_n^\perp\right)+\ot{\bm{\alpha}} x.\]
    \end{enumerate}
\end{proposition}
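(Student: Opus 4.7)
The plan is to dispatch the three parts in order, each one bootstrapping the previous, and to reduce everything to the identities $ZZ^\dagger = P_{\ran Z}$, $Z^\dagger Z = \Id_{\R^{n-1}}$ (available because $Z$ has full column rank) and $\ran \ot{Z} = \Delta_n^\perp$ from \cref{prop: ker Z ran Z}.

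For part \ref{prop: alpha}, I would compute directly $Z\bm{\alpha} = ZZ^\dagger \bm{\delta} = P_{\ran Z}\bm{\delta}$. By \cref{fact: Zdecom}, $\ran Z = (\ker Z^\ast)^\perp = \spa\{\bm{1}_n\}^\perp$, so the claim $Z\bm{\alpha}=\bm{\delta}$ reduces to checking that $\bm{1}_n^\ast \bm{\delta} = \sum_i \delta_i = 0$, which is exactly the observation made in the paragraph preceding the proposition. Uniqueness follows from $\ker Z = \{\bm{0}_{n-1}\}$, and $\bm{\alpha}\neq 0$ follows because $\delta_1=d_1>0$ forces $\bm{\delta}\neq 0$.

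For part \ref{prop: equiv sol no alpha}, I would exploit $\ran \ot{Z} = \Delta_n^\perp$ together with $\ot{Z}^\dagger \ot{Z} = \Id_{\Hi^{n-1}}$ (from \cref{rem: Moore--Penrose inverse}, since $Z$ is injective). For $(\Rightarrow)$: if $\ot{Z}\bm{e} \in \mathcal{A}_n(x)$, then automatically $\ot{Z}\bm{e} \in \Delta_n^\perp$, hence $\ot{Z}\bm{e} \in \mathcal{A}_n(x)\cap \Delta_n^\perp$; applying $\ot{Z}^\dagger$ gives $\bm{e} = \ot{Z}^\dagger\ot{Z}\bm{e} \in \ot{Z}^\dagger(\mathcal{A}_n(x)\cap \Delta_n^\perp)$. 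For $(\Leftarrow)$: any $\bm{u}\in \mathcal{A}_n(x)\cap \Delta_n^\perp$ satisfies $\ot{Z}\ot{Z}^\dagger\bm{u} = P_{\ran \ot{Z}}\bm{u} = \bm{u}$ (since $\bm{u}$ already lives in $\ran \ot{Z}$), so $\ot{Z}(\ot{Z}^\dagger\bm{u}) = \bm{u} \in \mathcal{A}_n(x)$.

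For part \ref{prop: equiv sol}, I would combine \ref{prop: alpha} and \ref{prop: equiv sol no alpha} through the Kronecker product identity $\ot{Z}\,\ot{\bm{\alpha}} = \ot{(Z\bm{\alpha})} = \ot{\bm{\delta}}$, which rewrites $\ot{\bm{\delta}}x = \ot{Z}(\ot{\bm{\alpha}}x)$. Then $\ot{Z}\bm{v} \in \mathcal{A}_n(x) + \ot{\bm{\delta}}x$ becomes $\ot{Z}(\bm{v} - \ot{\bm{\alpha}}x) \in \mathcal{A}_n(x)$, and applying \ref{prop: equiv sol no alpha} to $\bm{e} := \bm{v} - \ot{\bm{\alpha}}x$ yields the claimed equivalence after translating back by $\ot{\bm{\alpha}}x$. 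I do not anticipate any genuine obstacle; the whole argument is essentially bookkeeping with the Moore--Penrose machinery, and the only subtlety is making sure the scalar identity $Z\bm{\alpha}=\bm{\delta}$ lifts cleanly to the Hilbert-space operator identity $\ot{Z}\,\ot{\bm{\alpha}} = \ot{\bm{\delta}}$, which is a direct instance of the product rule $\ot{K_1}\ot{K_2} = \ot{(K_1K_2)}$ recalled just after \cref{eq: KronMatrix}.
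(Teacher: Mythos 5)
Your proposal is correct and follows essentially the same route as the paper's own proof: part (i) via $Z\bm{\alpha}=ZZ^\dagger\bm{\delta}=P_{\ran Z}\bm{\delta}=\bm{\delta}$ using $\sum_i\delta_i=0$ and injectivity of $Z$, part (ii) via $\ran\ot{Z}=\Delta_n^\perp$ together with $\ot{Z}^\dagger\ot{Z}=\Id_{\Hi^{n-1}}$ and $\ot{Z}\ot{Z}^\dagger=P_{\Delta_n^\perp}$, and part (iii) by substituting $\bm{e}:=\bm{v}-\ot{\bm{\alpha}}x$ and invoking (ii). Your explicit mention of the Kronecker identity $\ot{Z}\,\ot{\bm{\alpha}}=\ot{(Z\bm{\alpha})}$ is a welcome clarification of a step the paper leaves implicit, but it is not a different argument.
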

\begin{proof}
    \ref{prop: alpha}: Recall that $ZZ^\dagger=P_{\ran Z}$ (by \cref{def: Moore--Penrose inverse}) and $\ran Z=(\ker Z^\ast)^\perp=\spa\{\bm{1}_n\}^\perp$ (by \cref{fact: Zdecom}). Combining both, we get
\begin{equation}\label{eq: Fix ZZdagger}
    \Fix ZZ^\dagger=\spa\{\bm{1}_n\}^\perp.
\end{equation}
Since $\bm{\delta}\in\spa\{\bm{1}_n\}^\perp$, it holds that $Z\bm{\alpha}=ZZ^\dagger\bm{\delta}=\bm{\delta}$. The uniqueness is a consequence of the injectivity of $Z$, as well as the fact that $\bm{\alpha}\neq 0$, since $\bm{\delta}\neq0$ (e.g., $\delta_1=d_1>0$).

    \ref{prop: equiv sol no alpha}: Since $\ran \ot{Z}=\Delta_n^\perp$ (by \cref{prop: ker Z ran Z}), we deduce that the inclusion $\ot{Z}\bm{e}\in\mathcal{A}_n(x)$ is equivalent to
        \begin{equation}\label{eq: v constraints 2}
           \ot{Z}\bm{e}\in\mathcal{A}_n(x)\cap\Delta_n^\perp.
        \end{equation}

    Note that $\ot{\Z}^\dagger \ot{\Z}=\Id_{\Hi^{n-1}}$, by \cref{rem: Moore--Penrose inverse}. Hence, starting from \cref{eq: v constraints 2} and applying the Moore--Penrose inverse $ \ot{Z}^\dagger$ on both sides, we get $\bm{e}=\ot{Z}^\dagger \ot{Z}\bm{e}\in \ot{Z}^\dagger(\mathcal{A}_n(x)\cap\Delta_n^\perp)$.

    Conversely, starting from the inclusion $\bm{e}\in \ot{Z}^\dagger\left(\mathcal{A}_n(x)\cap\Delta_n^\perp\right)$, applying $\ot{Z}$ on both sides and using the fact that $\ot{Z}\ot{Z}^\dagger=P_{\ran\ot{Z}}=P_{\Delta_n^\perp}$, we obtain $\ot{Z}\bm{e}\in \ot{Z}\ot{Z}^\dagger\left(\mathcal{A}_n(x)\cap\Delta_n^\perp\right)=\mathcal{A}_n(x)\cap\Delta_n^\perp$,
    reaching \cref{eq: v constraints 2}.

    \ref{prop: equiv sol}: By \ref{prop: alpha}, $\bm{\delta}=Z\bm{\alpha}$. Hence $\ot{Z}\bm{v}\in\mathcal{A}_n(x)+\ot{\bm{\delta}} x$ is equivalent to $\ot{Z}(\bm{v}-\ot{\bm{\alpha}} x)\in\mathcal{A}_n(x)$, which, by \ref{prop: equiv sol no alpha}, is equivalent to $\bm{v}-\ot{\bm{\alpha}} x\in\ot{Z^\dagger}\left(\mathcal{A}_n(x)\cap \Delta_n^\perp\right)$, proving the claim.
\end{proof}

\begin{remark}\label{rem: w zeros}
Putting together \cref{lem: Fix T general} and \crefpart{prop: v new constraints}{prop: equiv sol}, we obtain a description of the vectors $\bm{v}$ in $\Fix \widetilde{\mathcal{T}}$ as follows:
$$\bm{v}=\ot{\bs{\alpha}}x+\ot{Z}^\dagger\bm{a}, \quad\text{with } \bm{a}\in \mathcal{A}_n(x)\cap \Delta_n^\perp.$$
The latter means that $\bm{a}=({a}_1,\ldots,{a}_n)$ satisfies
${a}_i\in A_i(x)$, for all $i=1,\ldots, n$, and  $\sum_{i=1}^n {a}_i=0$.
This clearly implies that $x\in\zer(\sum_{i=1}^nA_i)$. Therefore,
$$\Fix \widetilde{\mathcal{T}}= \left\{\ot{\bs{\alpha}}x+\ot{Z}^\dagger\bm{a}\;\Bigl\lvert\; \textstyle x\in\zer(\sum_{i=1}^nA_i), a_i\in A_i(x), \forall i= 1,\ldots, n,  \sum_{i=1}^n a_i=0\right\}.$$
\end{remark}

\section{The case of linear subspaces}\label{sec: 4}

In this section, we particularize the results given in \cref{lem: Fix T general} to the case of normal cone operators of closed linear subspaces. Furthermore, we rewrite the sets of fixed points in such a way that it will be easy to calculate the projections onto them.
Hence, throughout this section we assume that $U_i$ is a closed linear subspace of $\Hi$, for $i=1,\ldots, n$, and
$$A_i(x)=N_{U_i}(x)=\begin{cases} U_i^\perp &\text{if }x\in U_i,\\ \emptyset&\text{otherwise}.\end{cases}$$
We employ the following notation:
\begin{align*}
    U&:=U_1\cap\cdots\cap U_n\subseteq\Hi,\\
    \mathcal{U}&:=U_1\times\cdots\times U_n\subseteq\Hi^n.
\end{align*}

\subsection{Fixed points specializations}

We first particularize \cref{lem: Fix T general}  to the context of linear subspaces. To this end, we introduce the following subspace:
\begin{equation}\label{eq: E}
E:=\ot{Z^\dagger}(\mathcal{U}^\perp\cap \Delta_n^\perp)\subseteq\Hi^{n-1}.
\end{equation}

\begin{lemma}\label{lemma: E closed}
The linear subspace $E$ defined in \cref{eq: E} is closed.
\end{lemma}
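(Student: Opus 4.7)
The plan is to show that $E$ can be rewritten as the preimage of a closed set under the continuous linear operator $\ot{Z}$, which will give closedness for free. Specifically, I will prove the equality
\[E=(\ot{Z})^{-1}(\mathcal{U}^\perp\cap\Delta_n^\perp).\]
Once this is established, closedness follows immediately: $\mathcal{U}^\perp\cap\Delta_n^\perp$ is closed in $\Hi^n$ as the intersection of two closed linear subspaces (each $U_i^\perp$ is closed, so $\mathcal{U}^\perp=U_1^\perp\times\cdots\times U_n^\perp$ is closed, and $\Delta_n^\perp$ is closed by \cref{prop: ker Z ran Z}), and the preimage of a closed set under the continuous linear operator $\ot{Z}$ is closed.

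To prove the equality, I would use the two identities from \cref{rem: Moore--Penrose inverse} and \cref{def: Moore--Penrose inverse} combined with \cref{prop: ker Z ran Z}: namely, $\ot{Z^\dagger}\ot{Z}=\Id_{\Hi^{n-1}}$ (since $\ot{Z}$ is injective) and $\ot{Z}\ot{Z^\dagger}=P_{\ran\ot{Z}}=P_{\Delta_n^\perp}$. For the inclusion $E\subseteq(\ot{Z})^{-1}(\mathcal{U}^\perp\cap\Delta_n^\perp)$, take $\bm{v}=\ot{Z^\dagger}\bm{a}$ with $\bm{a}\in\mathcal{U}^\perp\cap\Delta_n^\perp$; applying $\ot{Z}$ yields $\ot{Z}\bm{v}=P_{\Delta_n^\perp}\bm{a}=\bm{a}\in\mathcal{U}^\perp\cap\Delta_n^\perp$. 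For the reverse inclusion, if $\bm{v}\in\Hi^{n-1}$ satisfies $\ot{Z}\bm{v}\in\mathcal{U}^\perp\cap\Delta_n^\perp$, then applying $\ot{Z^\dagger}$ gives $\bm{v}=\ot{Z^\dagger}\ot{Z}\bm{v}\in\ot{Z^\dagger}(\mathcal{U}^\perp\cap\Delta_n^\perp)=E$.

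I do not anticipate any serious obstacle: the only subtlety is the careful use of both Moore--Penrose identities (one requires injectivity of $\ot Z$, the other uses that the range is the closed subspace $\Delta_n^\perp$), but both have been set up as facts earlier in the paper. The whole argument should fit in a few lines.
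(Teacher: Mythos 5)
Your proof is correct and takes essentially the same route as the paper's: both identify $E$ with the preimage $(\ot{Z})^{-1}(\mathcal{U}^\perp\cap\Delta_n^\perp)$ via the identities $\ot{Z}\ot{Z^\dagger}=P_{\Delta_n^\perp}$ and $\ot{Z^\dagger}\ot{Z}=\Id_{\Hi^{n-1}}$ (the paper phrases this as $\ot{Z}(E)=\mathcal{U}^\perp\cap\Delta_n^\perp$ followed by injectivity of $\ot{Z}$), and then conclude by continuity of $\ot{Z}$ and closedness of $\mathcal{U}^\perp\cap\Delta_n^\perp$.
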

\begin{proof}
By~\cref{eq: Fix ZZdagger}, we have that $\Fix \ot{Z}\ot{Z}^\dagger=\Delta_n^\perp$. Thus,
    \begin{equation*}
    \ot{Z}(E)=\ot{Z}\ot{Z^\dagger}(\mathcal{U}^\perp\cap\Delta_n^\perp)=\mathcal{U}^\perp\cap\Delta_n^\perp.
    \end{equation*}
    Now, since $\ot{Z}$ is injective, using the preimage, we get
    $$E=\ot{Z^{-1}}\ot{Z}(E)=\ot{Z^{-1}}(\mathcal{U}^\perp\cap\Delta_n^\perp),$$
    so $E$ is a closed subspace because $\ot{Z}$ is linear and continuous and $\mathcal{U}^\perp\cap\Delta_n^\perp$ is closed.
\end{proof}

\begin{remark}\label{rem: computation E}
    Under our linear subspace setting, one has
    $$\mathcal{A}_n(x)=\begin{cases}\mathcal{U}^\perp & \text{if }x\in U,\\ \emptyset & \text{otherwise}.\end{cases}$$
    Hence, combining \crefpart{prop: v new constraints}{prop: equiv sol no alpha} and \cref{eq: E}, one reaches, for all $\bm e\in\Hi^{n-1}$, the equivalence
    \begin{equation*}
    \bm{e}\in E \Leftrightarrow \ot{Z}\bm{e} \in \mathcal{U}^\perp.
    \end{equation*}
\end{remark}

We are now ready to describe the sets of fixed points of $\mathcal{T}$ and $\widetilde{\mathcal{T}}$ for the linear case. Recall that by \crefpart{fact: proj}{fact: proj_iv} and \cref{fact: proj_v}, the limit points of \cref{alg:T,alg:Ttilde} are the $\mathcal{M}$-projection onto the fixed points of $\mathcal{T}$ and the orthogonal projection onto the fixed points of $\widetilde{\mathcal{T}}$,  respectively.

\begin{proposition}[Specialization for linear subspaces]\label{prop: spec II}
The fixed points of the operators $\mathcal{T}$ and $\widetilde{\mathcal{T}}$ defined in~\cref{eq: T} and~\cref{eq: T tilde} are given by
    \begin{align*}
        \Fix \mathcal{T}&=\ot{(\bm{1}_n,\bm{\alpha})} U\oplus(\{\bm{0}_n\}\times E),\\
        \Fix \widetilde{\mathcal{T}}&=\ot{\bm{\alpha}} U\oplus E,
    \end{align*}
where $\bm{\alpha}$ and $E$ are given by~\cref{eq: alpha,eq: E}, and $\oplus$ denotes the Minkowski sum with orthogonal summands.
\end{proposition}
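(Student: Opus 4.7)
The plan is to combine the general fixed point descriptions from \cref{lem: Fix T general} with the linear structure, using \crefpart{prop: v new constraints}{prop: equiv sol} to rewrite the inclusion $\ot{Z}\bm{v}\in\mathcal{A}_n(x)+\ot{\bm{\delta}}x$ in closed form. In the normal cone setting one has $\mathcal{A}_n(x)=\mathcal{U}^\perp$ if $x\in U$ (and $\emptyset$ otherwise), so $\mathcal{A}_n(x)\cap\Delta_n^\perp=\mathcal{U}^\perp\cap\Delta_n^\perp$ whenever the inclusion can be satisfied. Substituting this into \crefpart{prop: v new constraints}{prop: equiv sol} and invoking the definition of $E$ in \cref{eq: E}, the inclusion becomes the pair of conditions $x\in U$ and $\bm{v}\in\ot{\bm{\alpha}}x+E$. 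Feeding this back into \cref{lem: Fix T general}, I immediately obtain the Minkowski sum identities $\Fix\widetilde{\mathcal{T}}=\ot{\bm{\alpha}}U+E$ and $\Fix\mathcal{T}=\ot{(\bm{1}_n,\bm{\alpha})}U+(\{\bm{0}_n\}\times E)$.

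It then remains to promote these Minkowski sums to orthogonal direct sums. For $\Fix\widetilde{\mathcal{T}}$ I would take $x\in U$ and $\bm{e}=\ot{Z^\dagger}\bm{a}\in E$ with $\bm{a}\in\mathcal{U}^\perp\cap\Delta_n^\perp$, and expand
$$\langle\ot{\bm{\alpha}}x,\bm{e}\rangle=\sum_{i=1}^{n}\bigl(\bm{\alpha}^\ast Z^\dagger\bigr)_i\,\langle x,a_i\rangle.$$
The crucial observation is that $U\subseteq U_i$ forces $a_i\in U_i^\perp\subseteq U^\perp$, so every inner product $\langle x,a_i\rangle$ vanishes and hence $\ot{\bm{\alpha}}U\perp E$, regardless of the exact values of the coefficients $(\bm{\alpha}^\ast Z^\dagger)_i$. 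The corresponding orthogonality for $\Fix\mathcal{T}$ is then inherited, since the inner product of $\ot{(\bm{1}_n,\bm{\alpha})}x$ with $(\bm{0}_n,\bm{e})$ in $\Hi^{2n-1}$ collapses to $\langle\ot{\bm{\alpha}}x,\bm{e}\rangle$, which has just been shown to vanish.

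Finally, to legitimately use $\oplus$, I would note that $E$ is closed by \cref{lemma: E closed}, and that $\ot{\bm{\alpha}}U$ is closed because $\bm{\alpha}\neq 0$ by \crefpart{prop: v new constraints}{prop: alpha} makes $\ot{\bm{\alpha}}$ a topological embedding of $\Hi$ into $\Hi^{n-1}$; the same reasoning handles $\ot{(\bm{1}_n,\bm{\alpha})}U\subseteq\Hi^{2n-1}$ and the closed summand $\{\bm{0}_n\}\times E$. The only step with any genuine content is the orthogonality computation above, which reduces to the elementary inclusion $U_i^\perp\subseteq U^\perp$; nothing in this route looks like a serious obstacle.
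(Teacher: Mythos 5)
Your proposal is correct and follows essentially the same route as the paper: the additive decomposition comes from \cref{lem: Fix T general} combined with \crefpart{prop: v new constraints}{prop: equiv sol} and the definition of $E$, and the orthogonality is established by the same direct computation exploiting $x\in U\subseteq U_j$ and $a_j\in U_j^\perp$, with the case of $\Fix\mathcal{T}$ reduced to that of $\Fix\widetilde{\mathcal{T}}$ because the first $n$ components of the second summand vanish. The extra remarks on closedness of the summands are harmless but not needed for the statement as formulated, since $\oplus$ here only asserts orthogonality of the summands.
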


\begin{proof}
By \cref{lem: Fix T general} and \crefpart{prop: v new constraints}{prop: equiv sol}, the additive decomposition is clear.
It remains to prove the orthogonality of the summands. Notice that it suffices to show the orthogonality for $\Fix \widetilde{\mathcal{T}}$, as the the first $n$ components of the second set in $\Fix \mathcal{T}$ are zero.

    Take $\bm{u}=(u_1,\ldots,u_{n-1})\in \ot{\bm{\alpha}}U$ and $\bm{e}\in E$. Then, by~\cref{eq: E}, there is some ${\bm{b}}=({b}_1,\ldots,{b}_n)\in\mathcal{U}^\perp\cap\Delta_n^\perp$ such that $\bm{e}=\ot{Z^\dagger}{\bm{b}}$. Observe that $u_i\in U\subseteq U_j$ and ${b}_j\in U_j^\perp$, so $u_i\perp{b}_j$, for all $i=1,\ldots,n-1$ and $j=1,\ldots,n$. Thus, we have that
    \begin{align*}
    \langle \bm{u},\bm{e}\rangle=\sum_{i=1}^{n-1}\langle u_i,( \ot{Z^\dagger}{\bm{b}})_i\rangle=\sum_{i=1}^{n-1}\sum_{j=1}^n(\ot{Z^\dagger})_{i,j}\langle u_i,{b}_j\rangle=0,
    \end{align*}
    so the result follows.
\end{proof}

\subsection{Projection onto fixed points}

In this section, we provide explicit expressions for the projection onto $\Fix \widetilde{\mathcal{T}}$ and the $\mathcal{M}$-projection onto $\Fix \mathcal{T}$. Thanks to \cref{prop: spec II}, the fixed point set of $\widetilde{\mathcal{T}}$ is described as an orthogonal sum of linear subspaces. As shown next, this permits to compute the projection onto this set in a simple way, providing the limit point for \cref{alg:Ttilde} as stated in \crefpart{fact: proj}{fact: proj_v}.

\begin{theorem}[Projection onto $\Fix \widetilde{\mathcal{T}}$]\label{th: projection Fix T tilde} We have
    \begin{equation}\label{eq: projection Fix T tilde}
      P_{\Fix \widetilde{\mathcal{T}}}=\ot{\bm{\alpha}}P_U\ot{\bm{\alpha}^\dagger}+P_E.
    \end{equation}
\end{theorem}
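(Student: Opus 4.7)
The main observation is that Proposition~\ref{prop: spec II} has already done the structural work: it exhibits $\Fix\widetilde{\mathcal{T}}$ as the orthogonal sum $\ot{\bm{\alpha}}U\oplus E$ of two linear subspaces. The plan is therefore (i) check that both summands are closed so that the sum is closed, (ii) invoke the standard fact that the projection onto an orthogonal sum of closed subspaces is the sum of the individual projections, and (iii) identify $P_{\ot{\bm{\alpha}}U}$ with $\ot{\bm{\alpha}}P_U\ot{\bm{\alpha}^\dagger}$.

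For (i), the subspace $E$ is closed by Lemma~\ref{lemma: E closed}. For the other summand, the key point is that $\bm{\alpha}\neq 0$ by \crefpart{prop: v new constraints}{prop: alpha}, so $\ot{\bm{\alpha}}$ is bounded below ($\norm{\ot{\bm{\alpha}}x}=\norm{\bm{\alpha}}\norm{x}$); hence $\ot{\bm{\alpha}}$ is a topological embedding of $\Hi$ into $\Hi^{n-1}$, and $\ot{\bm{\alpha}}U$ is the image of a closed subspace under a topological embedding, hence closed. Step (ii) then yields $P_{\Fix\widetilde{\mathcal{T}}}=P_{\ot{\bm{\alpha}}U}+P_E$, so~\eqref{eq: projection Fix T tilde} reduces to the identity $P_{\ot{\bm{\alpha}}U}=\ot{\bm{\alpha}}P_U\ot{\bm{\alpha}^\dagger}$.

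For (iii), the whole argument rests on the identity $\ot{\bm{\alpha}^\dagger}\ot{\bm{\alpha}}=\Id_{\Hi}$, which follows from \cref{rem: Moore--Penrose inverse} since $\bm{\alpha}\neq 0$ has full column rank, and on the fact that $\ot{\bm{\alpha}^\dagger}=\ot{\bm{\alpha}^*}/\norm{\bm{\alpha}}^2$ is a positive scalar multiple of $\ot{\bm{\alpha}^*}=(\ot{\bm{\alpha}})^*$. Setting $T:=\ot{\bm{\alpha}}P_U\ot{\bm{\alpha}^\dagger}$, I would verify the three defining properties of an orthogonal projection:
\begin{itemize}
\item \emph{Idempotence:} $T^2=\ot{\bm{\alpha}}P_U(\ot{\bm{\alpha}^\dagger}\ot{\bm{\alpha}})P_U\ot{\bm{\alpha}^\dagger}=\ot{\bm{\alpha}}P_U^2\ot{\bm{\alpha}^\dagger}=T$.
\item \emph{Self-adjointness:} Using $\ot{\bm{\alpha}^\dagger}=\norm{\bm{\alpha}}^{-2}(\ot{\bm{\alpha}})^*$ and $P_U^*=P_U$, one gets $T^*=(\ot{\bm{\alpha}^\dagger})^*P_U(\ot{\bm{\alpha}})^*=\ot{\bm{\alpha}}P_U\ot{\bm{\alpha}^\dagger}=T$.
\item \emph{Range:} Clearly $\ran T\subseteq\ot{\bm{\alpha}}U$, and conversely, for $u\in U$, $T(\ot{\bm{\alpha}}u)=\ot{\bm{\alpha}}P_U(\ot{\bm{\alpha}^\dagger}\ot{\bm{\alpha}})u=\ot{\bm{\alpha}}u$, so $\ot{\bm{\alpha}}U\subseteq\ran T$.
\end{itemize}
These three facts together with closedness of $\ot{\bm{\alpha}}U$ force $T=P_{\ot{\bm{\alpha}}U}$, completing the proof.

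The only mildly delicate step is the closedness of $\ot{\bm{\alpha}}U$, which is where the nontriviality of $\bm{\alpha}$ (guaranteed by \crefpart{prop: v new constraints}{prop: alpha}) plays a role; everything else is a routine manipulation of the Kronecker and Moore--Penrose machinery established in \cref{sec: 2.1}. I do not expect any genuine obstacles once the orthogonal decomposition of Proposition~\ref{prop: spec II} is in hand.
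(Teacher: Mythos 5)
Your proposal is correct, and its skeleton (orthogonal decomposition from \cref{prop: spec II}, closedness of both summands, additivity of projections over an orthogonal sum of closed subspaces) matches the paper's proof exactly. Where you diverge is in the two supporting arguments. For closedness of $\ot{\bm{\alpha}}U$ you argue that $\ot{\bm{\alpha}}$ is bounded below (hence a topological embedding) since $\bm{\alpha}\neq 0$, whereas the paper instead writes $\ot{\bm{\alpha}}U=U^{n-1}\cap\ran\ot{\bm{\alpha}}$ and reads off closedness from the intersection; both are fine. More substantively, for the identity $P_{\ot{\bm{\alpha}}U}=\ot{\bm{\alpha}}P_U\ot{\bm{\alpha}^\dagger}$ the paper leans on the same intersection representation together with Deutsch's lemma $P_{A\cap B}=P_AP_B$ (valid when $P_B$ leaves $A$ invariant), following the pattern of \cite[Lemma~4.3]{bauschke-splitting}; you instead verify directly that $T:=\ot{\bm{\alpha}}P_U\ot{\bm{\alpha}^\dagger}$ is idempotent and self-adjoint with range $\ot{\bm{\alpha}}U$, using $\ot{\bm{\alpha}^\dagger}\ot{\bm{\alpha}}=\Id_\Hi$ and $\ot{\bm{\alpha}^\dagger}=\norm{\bm{\alpha}}^{-2}(\ot{\bm{\alpha}})^*$. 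Your computations check out, and your route is more self-contained: it avoids importing the external commuting-projections lemma at the cost of a few lines of algebra, and it even makes the closedness of $\ot{\bm{\alpha}}U$ automatic in that step (the range of a bounded self-adjoint idempotent is closed), though closedness is still needed, as you note, to split $P_{\ot{\bm{\alpha}}U\oplus E}$ into a sum. The paper's route is shorter on the page but relies on the reader accepting the cited lemma; either is acceptable.
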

\begin{proof}
	By \cref{prop: spec II}, we know that $\Fix \widetilde{\mathcal{T}}=\ot{\bm{\alpha}}U\oplus E$. Note that
    $$\ot{\bm{\alpha}}U=\{(\alpha_1u,\ldots,\alpha_{n-1}u)\in\Hi^{n-1}\mid u\in U\}=U^{n-1}\cap\ran\ot{\bm{\alpha}},$$
    where $U^{n-1}:=\{(u_1,\ldots,u_{n-1})\in\Hi^{n-1} \mid u_i\in U, \text{ for } i=1,\ldots,n-1\}$. This, together with \cref{lemma: E closed}, shows that both summands in $\Fix \widetilde{\mathcal{T}}=\ot{\bm{\alpha}}U\oplus E$ are closed subspaces. Hence, we apply \cite[Proposition~29.6]{bauschke} to compute the projection of $\Fix \widetilde{\mathcal{T}}$ as
    \[P_{\Fix \widetilde{\mathcal{T}}}=P_{\ot{\bm{\alpha}}U\oplus E}=P_{\ot{\bm{\alpha}}U}+P_E.\]

    The rest of the proof of~\cref{eq: projection Fix T tilde} is similar to that of \cite[Lemma~4.3]{bauschke-splitting}, but we provide it for completeness. By \cref{def: Moore--Penrose inverse}, $P_{\ran \ot{\bm{\alpha}}}=\ot{\bm{\alpha}}\ot{\bm{\alpha}^\dagger}$. Since $U$ is a closed linear subspace of $\Hi$, we get that $P_{\ran\ot{\bm{\alpha}}}(U^{n-1})\subseteq U^{n-1}$. Hence, from \cite[Lemma~9.2]{deutsch} we derive that
        \[P_{\ot{\bm{\alpha}}U}=P_{U^{n-1}\cap\ran\bm{\alpha}}=P_{U^{n-1}}P_{\ran \ot{\bm{\alpha}}}=P_{U^{n-1}}\ot{\bm{\alpha}}\ot{\bm{\alpha}^\dagger}=\ot{\bm{\alpha}}P_U\ot{\bm{\alpha}^\dagger},\]
    where the last equality is a consequence of the linearity of $P_U$.
\end{proof}

\begin{remark}
    Note that $\bm{\alpha}\neq 0$, by \crefpart{prop: v new constraints}{prop: alpha}, so recalling \cref{rem: Moore--Penrose inverse}, we can write $\bm{\alpha}^\dagger=\frac{1}{\norm{\bm{\alpha}}^2}\bm{\alpha}^\ast$. Thus, for any $\bm{v}\in\Hi^{n-1}$, we have
    $$\textstyle\ot{\bm{\alpha}}P_U\left(\ot{\bm{\alpha}^\dagger}\bm{v}\right)=\left(\frac{\alpha_i}{\norm{\bm{\alpha}}^2}P_ U\left(\sum_{j=1}^{n-1}\alpha_jv_j\right)\right)_{i=1,\ldots,n-1}$$
    by linearity of the projection.
\end{remark}

Thanks to \cref{fact: projections relations}, we can derive a formula for computing the $\mathcal{M}$-projection onto~$\Fix \mathcal{T}$.

\begin{corollary}[$\mathcal{M}$-projection onto $\Fix \mathcal{T}$]\label{cor: M-projection Fix T} It holds that
    \[P^\mathcal{M}_{\Fix \mathcal{T}}=\ot{\begin{bmatrix}
    \bm{1}_n  \\
    \bm{\alpha}
    \end{bmatrix}}P_U\ot{\bm{\alpha}^\dagger}\mathcal{C}^*+\begin{bmatrix}
    \ot{(\bm{0}_{n\times(2n-1)})}  \\
    P_E\mathcal{C}^*
    \end{bmatrix},\]
    with $\mathcal{M}$ and $\mathcal{C}$ given by~\cref{eq: M}.
\end{corollary}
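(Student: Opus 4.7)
The plan is to invoke \cref{fact: projections relations}, which specializes in this setting to
\[
P^{\mathcal{M}}_{\Fix \mathcal{T}}=(\mathcal{M}+\mathcal{A})^{-1}\mathcal{C}\,P_{\Fix \widetilde{\mathcal{T}}}\,\mathcal{C}^{\ast},
\]
and then to substitute the expression for $P_{\Fix \widetilde{\mathcal{T}}}$ from \cref{th: projection Fix T tilde}. The work reduces to understanding how the operator $(\mathcal{M}+\mathcal{A})^{-1}\mathcal{C}$ acts on a typical element $\ot{\bm{\alpha}}u+\bm{e}\in\Fix \widetilde{\mathcal{T}}$ (with $u\in U$, $\bm{e}\in E$, by \cref{prop: spec II}).

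The key identification is the claim
\[
(\mathcal{M}+\mathcal{A})^{-1}\mathcal{C}\bigl(\ot{\bm{\alpha}}u+\bm{e}\bigr)=\bigl(\ot{(\bm{1}_n)}u,\ \ot{\bm{\alpha}}u+\bm{e}\bigr).
\]
To see this, I would use \cref{fact: Fix T bijection}, which asserts that $(\mathcal{M}+\mathcal{A})^{-1}\mathcal{C}|_{\Fix\widetilde{\mathcal{T}}}$ is the inverse of the bijection $\mathcal{C}^{\ast}|_{\Fix \mathcal{T}}$. By \cref{prop: spec II}, the pair $(\ot{(\bm{1}_n)}u,\,\ot{\bm{\alpha}}u+\bm{e})$ lies in $\Fix \mathcal{T}$, and a direct computation using $\mathcal{C}^{\ast}=\ot{[Z^{\ast}\,\, I_{n-1}]}$ together with $Z^{\ast}\bm{1}_n=\bm{0}_{n-1}$ (from \cref{fact: Zdecom}) gives
\[
\mathcal{C}^{\ast}\bigl(\ot{(\bm{1}_n)}u,\ \ot{\bm{\alpha}}u+\bm{e}\bigr)=\ot{Z^{\ast}}\ot{(\bm{1}_n)}u+\ot{\bm{\alpha}}u+\bm{e}=\ot{\bm{\alpha}}u+\bm{e},
\]
which by uniqueness of the inverse yields the desired identification.

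Finally, for a generic $(\bm{w},\bm{v})\in\Hi^{2n-1}$, I would apply \cref{th: projection Fix T tilde} to split
\[
P_{\Fix \widetilde{\mathcal{T}}}\mathcal{C}^{\ast}(\bm{w},\bm{v})=\ot{\bm{\alpha}}u+\bm{e},\quad u:=P_{U}\ot{\bm{\alpha}^{\dagger}}\mathcal{C}^{\ast}(\bm{w},\bm{v}),\ \bm{e}:=P_{E}\mathcal{C}^{\ast}(\bm{w},\bm{v}),
\]
and plug this decomposition into the key identification. The result is
\[
P^{\mathcal{M}}_{\Fix \mathcal{T}}(\bm{w},\bm{v})=\bigl(\ot{(\bm{1}_n)}u,\ \ot{\bm{\alpha}}u+\bm{e}\bigr),
\]
which is nothing but the claimed formula after grouping the two terms proportional to $u$ into the block operator $\ot{\begin{bmatrix}\bm{1}_n\\\bm{\alpha}\end{bmatrix}}P_U\ot{\bm{\alpha}^{\dagger}}\mathcal{C}^{\ast}$ and placing $\bm{e}$ in the second block. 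There is no serious obstacle: once the preimage of $\ot{\bm{\alpha}}u+\bm{e}$ under $\mathcal{C}^{\ast}|_{\Fix \mathcal{T}}$ is identified as $(\ot{(\bm{1}_n)}u,\,\ot{\bm{\alpha}}u+\bm{e})$, the corollary reduces to an assembly of \cref{fact: projections relations}, \cref{th: projection Fix T tilde} and \cref{prop: spec II}.
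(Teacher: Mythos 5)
Your proposal is correct and follows essentially the same route as the paper: reduce via \cref{fact: projections relations} to $P_{\Fix\widetilde{\mathcal{T}}}\mathcal{C}^*$, apply \cref{th: projection Fix T tilde}, and then identify $(\mathcal{M}+\mathcal{A})^{-1}\mathcal{C}\bigl(\ot{\bm{\alpha}}u+\bm{e}\bigr)=\bigl(\ot{(\bm{1}_n)}u,\ \ot{\bm{\alpha}}u+\bm{e}\bigr)$. The only cosmetic difference is that the paper cites \crefpart{rem: fixed points}{rem: MAC} for this last identification, while you re-derive it from \cref{fact: Fix T bijection} together with \cref{prop: spec II} and $Z^*\bm{1}_n=\bm{0}_{n-1}$ — the same mechanism, just unpacked.
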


\begin{proof}
    Let $\bm{w}\in\Hi^n$ and $\bm{v}\in\Hi^{n-1}$ and denote $\bm{y}:=\mathcal{C}^*(\bm{w},\bm{v})=\ot{Z}^*\bm{w}+\bm{v}$. By \cref{fact: projections relations}, $P^\mathcal{M}_{\Fix \mathcal{T}}(\bm{w},\bm{v})=(\mathcal{M}+\mathcal{A})^{-1}\mathcal{C}P_{\Fix \widetilde{\mathcal{T}}}(\bm{y})$. Notice that, by \cref{th: projection Fix T tilde}, one has $P_{\Fix \widetilde{\mathcal{T}}}(\bm{y})= \ot{\bm{\alpha}} u+\bm{e}$, with $u:=P_U(\ot{\bm{\alpha}^\dagger}\bm{y})$ and $\bm{e}:=P_E(\bm{y})$. Then, by \crefpart{rem: fixed points}{rem: MAC}, we deduce that
    $$P^\mathcal{M}_{\Fix \mathcal{T}}    \begin{bmatrix}
        \bm{w} \\ \bm{v}
    \end{bmatrix}=\begin{bmatrix}\ot{(\bm{1}_n)}u\\ \ot{\bm{\alpha}} u+\bm{e}\end{bmatrix},$$
    so the conclusion is achieved.
\end{proof}

\subsection{Algorithmic consequences}\label{sec: 5}

Gathering all the expressions derived in previous sections, we now state our main results about the sequences and limit points of \cref{alg:T,alg:Ttilde} under the current setting of linear subspaces.

\begin{theorem}[Strong convergence of \cref{alg:T}]\label{th: convergence expanded}
Suppose that \cref{as: graphs} holds with $A_i=N_{U_i}$, where $U_i$ is a closed linear subspace of $\Hi$, for $i=1,\ldots,n$. Let $(\bm{x}^k)_{k\in\N}$, $(\bm{w}^k)_{k\in\N}$ and $(\bm{v}^k)_{k\in\N}$ be the sequences generated by \cref{alg:T} with initial points  $\bm{w}^0=(w^0_1,\ldots,w^0_n)\in\Hi^{n}$ and $\bm{v}^0=(v_1^0,\ldots,v_{n-1}^0)\in\Hi^{n-1}$ and a sequence of relaxation parameters $(\theta_k)_{k\in\N}$ identical to some constant $\theta\in{]0,2[}$. Let
\begin{align*}
\overline u  &:= \tfrac{1}{\|\bm{\alpha}\|^2}P_U\left(\ot{\bs{\alpha}}^*\left(\ot{Z^*}\bm{w}^0 + \bm{v}^0\right)\right)=\tfrac{1}{\|\bm{\alpha}\|^2}P_U\left(\ot{\bs{\delta}}^*\bm{w}^0 + \ot{\bs{\alpha}}^*\bm{v}^0\right),\\
\overline{\bm{e}}  &:= P_E\left( \ot{Z}^*\bm{w}^0 +\bm{v}^0 \right).
\end{align*}
Then, the following assertions hold:

\begin{enumerate}[label=(\roman*)]
\item For each $i=1,\ldots,n$, the shadow sequence  $(x_i^k)_{k\in\N}$ and the governing sequence $(w_i^k)_{k\in\N}$ converge strongly to $\overline{u}$.
\item  For each $i=1,\ldots,n-1$, the governing sequence $(v_i^k)_{k\in\N}$ converges strongly to $\alpha_i\overline{u}+\bar{e}_i$.
\end{enumerate}
\end{theorem}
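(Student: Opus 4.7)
The plan is to recognize this theorem as a direct specialization of \cref{fact: proj} to the linear subspace setting. First I would verify the three hypotheses of \cref{fact: proj}: the operator $\mathcal{A}$ is maximally monotone with $\zer\mathcal{A}\neq\emptyset$ by \cref{rem: A} and \cref{as: graphs}; $(\mathcal{M}+\mathcal{A})^{-1}$ is Lipschitz by \cref{lem:invMA}; and, crucially, $\mathcal{A}$ is a linear relation because every $A_i=N_{U_i}$ has graph $U_i\times U_i^\perp$ (a linear subspace), the diagonal block $\mathcal{A}_D$ inherits this property, and the remaining blocks of $\mathcal{A}$ are linear operators. With $\theta_k\equiv\theta\in{]0,2[}$, \crefpart{fact: proj}{fact: proj_vi} upgrades the convergence in parts \ref{fact: proj_i} and \ref{fact: proj_ii} to strong and guarantees that $\Fix\widetilde{\mathcal{T}}$ is a linear subspace.

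Next I would identify the iterates with those of~\cref{eq: algorithm T,eq: algorithm T tilde}: setting $u^k:=(\bm{w}^k,\bm{v}^k)$ and $v^k:=\mathcal{C}^*u^k=\ot{Z^*}\bm{w}^k+\bm{v}^k$, the initial data $u^0=(\bm{w}^0,\bm{v}^0)$ yield $v^0=\ot{Z^*}\bm{w}^0+\bm{v}^0$, matching \crefpart{fact: proj}{fact: proj_iii}. Since $\Fix\widetilde{\mathcal{T}}$ is a linear (hence affine) subspace, \crefpart{fact: proj}{fact: proj_v} gives $\bar v=P_{\Fix\widetilde{\mathcal{T}}}(v^0)$, and \crefpart{fact: proj}{fact: proj_iv} then gives $\bar u=P^\mathcal{M}_{\Fix\mathcal{T}}(u^0)$.

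It remains to make these projections explicit. Applying \cref{cor: M-projection Fix T} gives
\[\bar u = P^\mathcal{M}_{\Fix\mathcal{T}}(u^0) = \left(\ot{(\bm{1}_n)}u,\,\ot{\bm\alpha}u+\bm e\right),\]
with $u=P_U\bigl(\ot{\bm{\alpha}^\dagger}(\ot{Z^*}\bm{w}^0+\bm{v}^0)\bigr)$ and $\bm e=P_E(\ot{Z^*}\bm{w}^0+\bm{v}^0)$. Since $\bm\alpha\neq 0$ by \crefpart{prop: v new constraints}{prop: alpha}, \cref{rem: Moore--Penrose inverse} supplies $\bm{\alpha}^\dagger=\bm{\alpha}^*/\|\bm\alpha\|^2$, and the identity $Z\bm\alpha=\bm\delta$ from \crefpart{prop: v new constraints}{prop: alpha} yields $\ot{\bm{\alpha}^*}\ot{Z^*}=\ot{\bm{\delta}^*}$, matching both expressions of $\overline u$ in the statement and identifying $\bm e=\overline{\bm e}$.

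Finally I would transfer these limits back to the algorithmic variables. By \crefpart{fact: proj}{fact: proj_i}, with the constant $\theta\in{]0,2[}$, the whole sequence $(u^k)$ converges strongly to $\bar u$, producing $w_i^k\to\overline u$ and $v_i^k\to\alpha_i\overline u+\bar e_i$ for each $i$. For the shadow sequence, \cref{prop: operator T} shows that the first $n$ coordinates of $\mathcal{T}(\bm{w}^k,\bm{v}^k)$ are precisely $\bm{x}^{k+1}$; combined with the strong convergence of $(\mathcal{T}(u^k))$ in \crefpart{fact: proj}{fact: proj_i}, this yields $x_i^k\to\overline u$. The only real obstacle is the bookkeeping to reconcile $\bm{\alpha}^\dagger$, $\bm\delta$, and the two equivalent forms of $\overline u$ displayed in the statement; the convergence itself is handed to us by \cref{fact: proj}.
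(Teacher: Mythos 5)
Your proposal is correct and follows essentially the same route as the paper: identify \cref{alg:T} with the relaxed iteration \cref{eq: algorithm T} for $\mathcal{T}$, invoke \crefpart{fact: proj}{fact: proj_vi} (via $\mathcal{A}$ being a linear relation) for strong convergence, apply \crefpart{fact: proj}{fact: proj_iv} together with \cref{cor: M-projection Fix T} and the identity $\ot{\bm{\alpha}}^*\ot{Z^*}=\ot{\bm{\delta}}^*$ to pin down the limit, and read off $\bm{x}^{k+1}$ from the first $n$ components of $\mathcal{T}(\bm{w}^k,\bm{v}^k)$ via \cref{prop: operator T}. Your explicit check that the graph of each $N_{U_i}$ is $U_i\times U_i^\perp$ and your intermediate appeal to \crefpart{fact: proj}{fact: proj_v} are slightly more detailed than the paper's write-up but change nothing of substance.
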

\begin{proof}
First note that, in view of \cref{prop: operator T}, \cref{alg:T} is an instance of iteration \cref{eq: algorithm T}  generated by
\begin{equation*}
    \begin{bmatrix}
        \bm{w}^{k+1} \\ \bm{v}^{k+1}
    \end{bmatrix}=(1-\theta)\begin{bmatrix}
        \bm{w}^{k} \\ \bm{v}^{k}
    \end{bmatrix} + \theta \mathcal{T}\begin{bmatrix}
        \bm{w}^k \\ \bm{v}^{k}
    \end{bmatrix},
\end{equation*}
where $\mathcal{T}$ is the operator defined in \cref{eq: T}. Since $\mathcal{A}$ is a linear relation, we get from \crefpart{fact: proj}{fact: proj_vi} that both $(\bm{w}^{k}, \bm{v}^{k})_{k\in\N}$ and $(\mathcal{T}(\bm{w}^{k}, \bm{v}^{k}))_{k\in\N}$ converge strongly to $(\overline{\bm{w}},\overline{\bm{v}})\in\Fix\mathcal{T}$. Observe that, by \cref{prop: operator T}, it holds
$$\mathcal{T}\begin{bmatrix}
            \bm{w}^k \\ \bm{v}^k
        \end{bmatrix}=\begin{bmatrix}
            \bm{x}^{k+1}\\ \bm{v}^{k}+ \ot{Z^\ast}(\bm{w}^{k}-2\bm{x}^{k+1})
        \end{bmatrix}, \quad\forall k\in\N,$$
and, hence, $\bm{x}^k\to \overline{\bm{w}}$. Finally, using \crefpart{fact: proj}{fact: proj_iv} together with \cref{cor: M-projection Fix T} and noting that  $\ot{\bm{\alpha}}^*\ot{Z}^*=\ot{\bm{\delta}}^*$, by \crefpart{prop: v new constraints}{prop: alpha}, we obtain that
\begin{equation*}
\begin{bmatrix}\overline{\bm{w}}\\ \overline{\bm{v}}\end{bmatrix} = P^\mathcal{M}_{\Fix \mathcal{T}}\begin{bmatrix}\bm{w}^{0}\\ \bm{v}^{0}\end{bmatrix}=\begin{bmatrix}\ot{(\bm{1}_n)}\overline{u}\\ \ot{\bm{\alpha}} \overline{u}+\overline{\bm{e}}\end{bmatrix},
\end{equation*}
and the result is proved.
\end{proof}

\begin{theorem}[Strong convergence of \cref{alg:Ttilde}]\label{th: convergence reduced}
Suppose that \cref{as: graphs} holds with $A_i=N_{U_i}$, where $U_i$ is a closed linear subspace of $\Hi$, for $i=1,\ldots,n$.
Let $(\bm{x}^k)_{k\in\N}$ and  $(\bm{v}^k)_{k\in\N}$ be the sequences generated by \cref{alg:Ttilde} with initial point $\bm{v}^0=(v_1^0,\ldots,v_{n-1}^0)\in\Hi^{n-1}$ and a sequence of relaxation parameters $(\theta_k)_{k\in\N}$ identical to some constant $\theta\in{]0,2[}$. Let
\begin{align*}
\widetilde{u} & := \tfrac{1}{\|\bm{\alpha}\|^2}P_U\left(\ot{\bs{\alpha}}^*\bm{v}^0\right),\\
\widetilde{\bm{e}} & := P_E\left(\bm{v}^0 \right).
\end{align*}
Then, the following assertions hold:

\begin{enumerate}[label=(\roman*)]
\item \label{th: convergence reduced x} For each $i=1,\ldots,n$, the shadow sequence $(x_i^k)_{k\in\N}$ converge strongly to $\widetilde{u}$.
\item \label{th: convergence reduced v}  For each $i=1,\ldots,n-1$, the governing sequence $(v_i^k)_{k\in\N}$ converges strongly to $\alpha_i\widetilde{u}+\widetilde{e}_i$.
\end{enumerate}
\end{theorem}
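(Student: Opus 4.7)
The plan is to proceed in complete parallel with the proof of \cref{th: convergence expanded}, but using the reduced iteration driven by $\widetilde{\mathcal{T}}$ instead of $\mathcal{T}$. First I would observe that, in view of \cref{prop: operator T tilde}, \cref{alg:Ttilde} is precisely the constant-relaxation instance of iteration~\cref{eq: algorithm T tilde},
\[
\bm{v}^{k+1}=(1-\theta)\bm{v}^k+\theta\widetilde{\mathcal{T}}(\bm{v}^k).
\]
Since the $A_i=N_{U_i}$ are linear relations, so is the aggregated operator $\mathcal{A}$, and so \crefpart{fact: proj}{fact: proj_vi} applies: $\Fix\widetilde{\mathcal{T}}$ is a linear subspace and $(\bm{v}^k)_{k\in\N}$ converges strongly to some $\overline{\bm{v}}\in\Fix\widetilde{\mathcal{T}}$. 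Because $\Fix\widetilde{\mathcal{T}}$ is in particular affine, \crefpart{fact: proj}{fact: proj_v} identifies the limit as $\overline{\bm{v}}=P_{\Fix\widetilde{\mathcal{T}}}(\bm{v}^0)$.

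Next, I would plug this into the explicit projection formula from \cref{th: projection Fix T tilde}:
\[
\overline{\bm{v}}=P_{\Fix\widetilde{\mathcal{T}}}(\bm{v}^0)=\ot{\bm{\alpha}}P_U\!\left(\ot{\bm{\alpha}^\dagger}\bm{v}^0\right)+P_E(\bm{v}^0)=\ot{\bm{\alpha}}\widetilde{u}+\widetilde{\bm{e}},
\]
after writing $\bm{\alpha}^\dagger=\bm{\alpha}^*/\|\bm{\alpha}\|^2$ (valid since $\bm{\alpha}\neq0$ by \crefpart{prop: v new constraints}{prop: alpha}) so that $\widetilde{u}=\tfrac{1}{\|\bm{\alpha}\|^2}P_U(\ot{\bm{\alpha}^*}\bm{v}^0)$. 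Reading off the $i$-th coordinate yields part \ref{th: convergence reduced v}.

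For part \ref{th: convergence reduced x}, I would exploit the fact that line~3 of \cref{alg:Ttilde} computes exactly the first $n$ components of $(\mathcal{M}+\mathcal{A})^{-1}\mathcal{C}\bm{v}^k$ (this is what Lemma \ref{lem:invMA} gives when applied to $(\ot{Z}\bm{v}^k,\bm{v}^k)=\mathcal{C}\bm{v}^k$, and is nothing but the relation $T(u^k)=(M+A)^{-1}Cv^k$ in \crefpart{fact: proj}{fact: proj_iii}). Since $(\mathcal{M}+\mathcal{A})^{-1}$ is Lipschitz (\cref{lem:invMA}) and $\mathcal{C}$ is bounded linear, $\bm{v}^k\to\overline{\bm{v}}$ strongly forces $\bm{x}^{k+1}$ to converge strongly to the first $n$ components of $(\mathcal{M}+\mathcal{A})^{-1}\mathcal{C}\overline{\bm{v}}$.

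Finally, I would identify this limit: by \crefpart{rem: fixed points}{rem: MAC}, $(\mathcal{M}+\mathcal{A})^{-1}\mathcal{C}$ sends any $\bm{v}\in\Fix\widetilde{\mathcal{T}}$ to $(\ot{(\bm{1}_n)}x_{\bm{v}},\bm{v})$, where $x_{\bm{v}}$ is the unique solution of $\ot{Z}\bm{v}\in\mathcal{A}_n(x)+\ot{\bm{\delta}}x$. In our linear setting, the description $\overline{\bm{v}}=\ot{\bm{\alpha}}\widetilde{u}+\widetilde{\bm{e}}$ from \cref{prop: spec II} together with \cref{rem: computation E} forces $x_{\overline{\bm{v}}}=\widetilde{u}\in U$, so each component of $\bm{x}^{k+1}$ converges strongly to $\widetilde{u}$. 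This is the only step that requires a small extra argument beyond direct invocation of earlier results, but it is essentially bookkeeping since everything has been set up in \cref{sec: 3,sec: 4}. No real obstacle is expected: the proof is an assembly of \cref{fact: proj}, \cref{th: projection Fix T tilde}, and the parametrization noted in \cref{rem: fixed points}.
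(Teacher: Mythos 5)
Your proof is correct. Part \ref{th: convergence reduced v} follows exactly the paper's route: constant relaxation plus \crefpart{fact: proj}{fact: proj_vi} and \crefpart{fact: proj}{fact: proj_v} give strong convergence of $(\bm{v}^k)$ to $P_{\Fix\widetilde{\mathcal{T}}}(\bm{v}^0)$, which \cref{th: projection Fix T tilde} makes explicit. For part \ref{th: convergence reduced x}, however, you take a genuinely different (and arguably more elementary) path. The paper lifts to the expanded space: it sets $\bm{u}^0:=(\bm{0}_{\Hi^n},\bm{v}^0)$, invokes \crefpart{fact: proj}{fact: proj_iii} to write $(\mathcal{M}+\mathcal{A})^{-1}\mathcal{C}\bm{v}^k=\mathcal{T}(\bm{u}^k)$, and then identifies the limit of $\mathcal{T}(\bm{u}^k)$ as $P^{\mathcal{M}}_{\Fix\mathcal{T}}(\bm{0}_{\Hi^n},\bm{v}^0)$ via \crefpart{fact: proj}{fact: proj_iv} and \cref{cor: M-projection Fix T}. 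You instead push the strong convergence of $(\bm{v}^k)$ through the Lipschitz map $(\mathcal{M}+\mathcal{A})^{-1}\mathcal{C}$ directly, then identify the limit using the parametrization $\bm{v}\mapsto(\ot{(\bm{1}_n)}x_{\bm{v}},\bm{v})$ of \crefpart{rem: fixed points}{rem: MAC}. Your route avoids \crefpart{fact: proj}{fact: proj_iii}--\labelcref{fact: proj_iv} and \cref{cor: M-projection Fix T} entirely, at the price of the extra verification $x_{\overline{\bm{v}}}=\widetilde{u}$; that step is sound: writing $\overline{\bm{v}}=\ot{\bm{\alpha}}\widetilde{u}+\widetilde{\bm{e}}$ with $\widetilde{u}\in U$ and $\widetilde{\bm{e}}\in E$, one gets $\ot{Z}\overline{\bm{v}}=\ot{\bm{\delta}}\widetilde{u}+\ot{Z}\widetilde{\bm{e}}$ with $\ot{Z}\widetilde{\bm{e}}\in\mathcal{U}^\perp\cap\Delta_n^\perp\subseteq\mathcal{A}_n(\widetilde{u})$, so uniqueness in \cref{lem: Fix T general} forces $x_{\overline{\bm{v}}}=\widetilde{u}$. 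The paper's version buys uniformity with the proof of \cref{th: convergence expanded} and reuses the $\mathcal{M}$-projection formula; yours is self-contained within the reduced space and makes the continuity mechanism explicit. Either is acceptable; I would just spell out the $x_{\overline{\bm{v}}}=\widetilde{u}$ computation rather than calling it bookkeeping.
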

\begin{proof}
A similar argument to that of \cref{th: convergence expanded} but using \cref{prop: operator T tilde} for iteration \cref{eq: algorithm T tilde} defined by $\widetilde{\mathcal{T}}$ in \cref{eq: T tilde} gives
\begin{equation*}
\bm{v}^k \to  P_{\Fix \widetilde{\mathcal{T}}}(\bm{v}^0)=\ot{\bs{\alpha}}^*\widetilde{u}+\widetilde{\bm{e}},
\end{equation*}
thanks to \cref{th: projection Fix T tilde}, which proves claim  \labelcref{th: convergence reduced v}.

To prove \labelcref{th: convergence reduced x}, set $\bm{u}^0:=(\bm{0}_{\Hi^{n}}, \bm{v}^0)\in\Hi^{2n-1}$. Since $\bm{v}^0=\mathcal{C}^*\bm{u}^0$, \crefpart{fact: proj}{fact: proj_iii} asserts that
\begin{equation*}
(\mathcal{M}+\mathcal{A})^{-1}\mathcal{C}\bm{v}^k=\mathcal{T}(\bm{u^k}), \quad\text{for all } k\in\N,
\end{equation*}
where $(\bm{u}^k)_{k\in\N}$ is the sequence generated by \cref{eq: algorithm T} with $\mathcal{T}$ given by \cref{eq: T}. On the one hand, \cref{fact: proj} and \cref{cor: M-projection Fix T} give us that
\begin{equation*}
\mathcal{T}(\bm{u^k}) \to  P^\mathcal{M}_{\Fix \mathcal{T}}\begin{bmatrix}\bm{0}_{\Hi^n}\\ \bm{v}^{0}\end{bmatrix}=\begin{bmatrix}\ot{(\bm{1}_n)}\widetilde{u}\\ \ot{\bm{\alpha}} \widetilde{u}+\widetilde{\bm{e}}\end{bmatrix}.
\end{equation*}
On the other hand, using \cref{lem:invMA}, we note that the first $n$ components of $(\mathcal{M}+\mathcal{A})^{-1}\mathcal{C}\bm{v}^k$ coincide with $\bm{x}^{k+1}$. Then, $\bm{x}^k\to \ot{(\bm{1}_n)}\widetilde{u}$, as claimed.
\end{proof}

\begin{remark}
Note that if one chooses the initial point $\bm{w}^0\in\Hi^n$ in \cref{alg:T} satisfying $$w_1^0=w_2^0=\cdots=w_n^0\in\Hi,$$
then both \cref{alg:T,alg:Ttilde} converge to the same limit points. Indeed, since $\bm{w}^0\in\Delta_n = \ker{\ot{Z}^*}\subseteq\ker{\ot{\bs{\delta}}^*}$, then  $\ot{\bs{\delta}}^*\bm{w}^0=\ot{Z}^*\bm{w}^0=0$ and the limit points $\overline u$ and $\overline{\bm{e}}$ in \cref{th: convergence expanded} respectively coincide with $\widetilde u$ and $\widetilde{\bm{e}}$ in \cref{th: convergence reduced}.
\end{remark}

In the remainder of this section we analyze some specific algorithms using the results previously derived. We have shown in \cref{th: convergence expanded} that the shadow sequence $(\bm{x}^k)_{k\in\N}$ generated by \cref{alg:T} converges to $\ot{(\bm{1}_{n})}\overline{u}$ and the governing sequence $(\bm{v}^k)_{k\in\N}$ to $\ot{\bm{\alpha}}\overline{u}+P_E\left( \ot{Z}^*\bm{w}^0 +\bm{v}^0 \right)$, with $\overline u= \|\bm{\alpha}\|^{-2}P_U\left(\ot{\bs{\delta}}^*\bm{w}^0 + \ot{\bs{\alpha}}^*\bm{v}^0\right)$, while \cref{th: convergence reduced} proves that the shadow sequence determined by \cref{alg:Ttilde} converges to $\ot{(\bm{1}_{n})}\widetilde{u}$ and the governing sequence to $\ot{\bm{\alpha}}\widetilde{u}+P_E\left(\bm{v}^0 \right)$, with $\widetilde{u} = \|\bm{\alpha}\|^{-2}P_U\left(\ot{\bs{\alpha}}^*\bm{v}^0\right)$. These limit points heavily depend on the vector $\bm{\alpha}$ and the subspace~$E$, given by~\cref{eq: alpha,eq: E}. In what follows we analyze these expressions for some common graphs and deduce the projections $P_{\Fix \widetilde{\mathcal{T}}}$ and $P^\mathcal{M}_{\Fix \mathcal{T}}$ for some specific algorithms, obtaining in this way closed-form expressions for their limit points.

\subsection{Computations of the vector \texorpdfstring{$\bm{\alpha}$}{alpha}}

The value of $\bm{\alpha}$ depends both on the onto decomposition $Z$ of the Laplacian of $G'$ and the degree balance $\bm{\delta}$ of $G$. Hence, there are many possible combinations. To begin our study, we present in \cref{tab: deltas} the degree balance of some specific algorithmic graphs.

\begin{table}[ht!]
\centering
\begin{tabular}{cc}
\toprule
$G$ & $\bm{\delta}$ \\ \midrule
Complete & $\delta_i=n+1-2i$, $1\leq i\leq n$ \\
Sequential & $\delta_1=-\delta_n=1$ and $\delta_i=0$, $1<i<n$ \\
Ring & $\delta_1=-\delta_n=2$ and $\delta_i=0$, $1<i<n$ \\
Parallel up & $\delta_1=n-1$ and $\delta_i=-1$, $1<i\leq n$ \\
Parallel down & $\delta_n=1-n$ and $\delta_i=1$, $1\leq i<n$ \\ \bottomrule
\end{tabular}
\caption{Computation of the degree balance $\bm{\delta}\in\R^n$ for some particular algorithmic graphs $G$ of order $n$.}
\label{tab: deltas}
\end{table}

Now, notice that the onto decomposition $Z$ of $\operatorname{Lap}(G')$ is unique up to orthogonal transformations: if $Z$ and $\widetilde{Z}$ are onto decompositions of $\Lap(G')$, then there exists an orthogonal matrix $O\in\R^{(n-1)\times (n-1)}$ such that $\widetilde{Z}=ZO$ (see \cite[Proposition 2.2]{graph-drs}). We show next that  $\bm{\alpha}$ inherits this property, so the problem of computing $\bm{\alpha}$ can be reduced to the case of a particular onto decomposition.

\begin{proposition}
    Let $Z$ be an onto decomposition of $\Lap(G')$. Let $O\in\R^{(n-1)\times (n-1)}$ be orthogonal and denote $\widetilde{Z}=ZO$. Let $\bm{\alpha}$ and $\widetilde{\bm{\alpha}}$ be computed from \cref{eq: alpha} using $Z$ and $\widetilde{Z}$, respectively. Then $\widetilde{\bm{\alpha}}=O^*\bm{\alpha}$.
\end{proposition}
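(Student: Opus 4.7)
The plan is to exploit the characterization of $\bm{\alpha}$ given in \crefpart{prop: v new constraints}{prop: alpha}, namely that $\bm{\alpha}$ is the \emph{unique} vector in $\R^{n-1}$ satisfying $Z\bm{\alpha}=\bm{\delta}$. Since this characterization was derived without specifying which onto decomposition was used, applying it to $\widetilde{Z}$ gives that $\widetilde{\bm{\alpha}}$ is the unique vector such that $\widetilde{Z}\widetilde{\bm{\alpha}}=\bm{\delta}$.

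The candidate is $O^*\bm{\alpha}$, and I would check it satisfies the defining equation: since $O$ is orthogonal, $OO^*=I_{n-1}$, and hence
\[
\widetilde{Z}(O^*\bm{\alpha})=(ZO)(O^*\bm{\alpha})=Z(OO^*)\bm{\alpha}=Z\bm{\alpha}=\bm{\delta}.
\]
Uniqueness then forces $\widetilde{\bm{\alpha}}=O^*\bm{\alpha}$. A preliminary sanity check is in order: $\widetilde{Z}$ is itself an onto decomposition of $\Lap(G')$ (one has $\widetilde{Z}\widetilde{Z}^*=ZOO^*Z^*=ZZ^*=\Lap(G')$, and $\widetilde{Z}$ is injective because $Z$ is and $O$ is invertible), so \crefpart{prop: v new constraints}{prop: alpha} does apply to $\widetilde{Z}$.

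As an alternative (and perhaps more informative) route, one can verify the general transformation rule $\widetilde{Z}^\dagger=O^*Z^\dagger$ directly from \cref{rem: Moore--Penrose inverse}, using that both $Z$ and $\widetilde{Z}$ have full column rank:
\[
\widetilde{Z}^\dagger=(\widetilde{Z}^*\widetilde{Z})^{-1}\widetilde{Z}^*=(O^*Z^*ZO)^{-1}O^*Z^*=O^*(Z^*Z)^{-1}OO^*Z^*=O^*Z^\dagger,
\]
and then conclude $\widetilde{\bm{\alpha}}=\widetilde{Z}^\dagger\bm{\delta}=O^*Z^\dagger\bm{\delta}=O^*\bm{\alpha}$. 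I do not expect any real obstacle; the only subtlety is to make explicit that $\widetilde{Z}$ is again an onto decomposition (so that $\widetilde{\bm{\alpha}}$ is well-defined via the same recipe) before invoking uniqueness or the Moore--Penrose formula.
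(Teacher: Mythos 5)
Your main argument is exactly the paper's proof: both use the uniqueness characterization from \crefpart{prop: v new constraints}{prop: alpha} and the identity $\bm{\delta}=Z\bm{\alpha}=\widetilde{Z}(O^*\bm{\alpha})$ to conclude. The added sanity check that $\widetilde{Z}$ is again an onto decomposition, and the alternative route via $\widetilde{Z}^\dagger=O^*Z^\dagger$, are both correct but not needed beyond what the paper does.
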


\begin{proof}
    By \crefpart{prop: v new constraints}{prop: alpha}, we have that $\bm{\delta}=Z\bm{\alpha}=Z(OO^\ast)\bm{\alpha}=\widetilde{Z}(O^*\bm{\alpha})$. Applying again \crefpart{prop: v new constraints}{prop: alpha}, we obtain the desired conclusion.
\end{proof}

The following results show how to compute $\bm{\alpha}$ for two \emph{extreme} graph configurations $(G,G')$, namely, trees and complete graphs. These have the minimum and maximum number of edges allowed under \cref{as: graphs} and also, by \cref{fact: Zdecom}, the onto decompositions of their respective Laplacian has a closed formula, making the computations of $\bm{\alpha}$ simpler.

\begin{proposition}[$\bm{\alpha}$ for trees]\label{prop: alphas tree}
    Suppose that $G=G'$ is a tree of order $n$ and take $Z=\operatorname{Inc}(G)$ (recall \crefpart{fact: Zdecom}{fact: Zdecom_i}). Then $\bm{\alpha}=\bm{1}_{n-1}$.
\end{proposition}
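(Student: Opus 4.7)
The plan is to directly verify the defining equation $Z\bm{\alpha} = \bm{\delta}$ from \crefpart{prop: v new constraints}{prop: alpha}, using the explicit choice $Z = \operatorname{Inc}(G)$ valid for trees (per \crefpart{fact: Zdecom}{fact: Zdecom_i}). Since $G = G'$ is a tree of order $n$, the incidence matrix has size $n \times (n-1)$, so the product $\operatorname{Inc}(G)\,\bm{1}_{n-1}$ is a well-defined vector in $\mathbb{R}^n$, matching the dimension of $\bm{\delta}$.

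The key observation is that the $i$-th entry of $\operatorname{Inc}(G)\,\bm{1}_{n-1}$ is simply the sum of all entries in row $i$ of $\operatorname{Inc}(G)$. By \cref{def: incidence matrix}, each edge $e$ contributes $+1$ to row $i$ if $e$ leaves node $i$, $-1$ if $e$ enters node $i$, and $0$ otherwise. Summing over all $e \in \mathcal{E}$, the $i$-th entry equals $d_i^{\rm out} - d_i^{\rm in} = \delta_i$. Therefore $\operatorname{Inc}(G)\,\bm{1}_{n-1} = \bm{\delta}$.

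By the uniqueness part of \crefpart{prop: v new constraints}{prop: alpha}, this immediately forces $\bm{\alpha} = \bm{1}_{n-1}$. There is no real obstacle here: the entire proof reduces to recognizing that row-sums of the incidence matrix encode exactly the degree balance, and then invoking uniqueness.
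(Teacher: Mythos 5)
Your proof is correct and follows the same route as the paper: verify $\operatorname{Inc}(G)\,\bm{1}_{n-1}=\bm{\delta}$ by summing the rows of the incidence matrix and then invoke the uniqueness from \crefpart{prop: v new constraints}{prop: alpha}. You simply spell out the row-sum computation that the paper leaves as ``by definition of the incidence matrix.''
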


\begin{proof}
    By \crefpart{prop: v new constraints}{prop: alpha},
    $\bm{\alpha}$ is the unique vector such that
    $Z\bm{\alpha}=\bm{\delta}$.
    However, by definition of the incidence matrix, $Z\bm{1}_{n-1}=\Inc(G)\bm{1}_{n-1}=\bm{\delta}$.
\end{proof}

\begin{proposition}[$\bm{\alpha}$ for complete graphs]\label{cor: nZZd = Lap}
    Let $G=G'$ be the complete graph of order $n$ and let $Z$ be an onto decomposition of $\operatorname{Lap}(G)$. Then
    $Z^\dagger = \frac{1}{n}Z^\ast$.
    Moreover, if $Z$ is taken as in \cref{fact: Zdecom}, then
    $$\alpha_j=\frac{1}{t_j}=\sqrt{\frac{(n-j)(n-j+1)}{n}},\quad\text{for all }j=1,\ldots,n-1.$$
\end{proposition}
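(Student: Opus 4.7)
The plan has two natural parts. For the first assertion, I would begin by noting that the Laplacian of the complete graph satisfies $\operatorname{Lap}(K_n) = nI_n - \bm{1}_n\bm{1}_n^*$, whose only nonzero eigenvalue is $n$ (with multiplicity $n-1$). By \cref{fact: Zdecom}, the distinguished onto decomposition there takes the form $Z = \sqrt{n}\,V$ for some $V\in\R^{n\times(n-1)}$ with orthonormal columns, so $Z^*Z = nI_{n-1}$. By \cref{rem: Moore--Penrose inverse} this gives $Z^\dagger = (Z^*Z)^{-1}Z^* = \frac{1}{n}Z^*$. Since any two onto decompositions differ by an orthogonal right factor, the identity $Z^\dagger = Z^*/n$ is independent of the particular choice of $Z$.

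For the second assertion, combine \crefpart{prop: v new constraints}{prop: alpha} with the first part to write $\bm{\alpha} = Z^\dagger\bm{\delta} = \frac{1}{n}Z^*\bm{\delta}$, where now $Z$ is the explicit onto decomposition in \crefpart{fact: Zdecom}{fact: Zdecom_ii}. Its $j$-th column has $Z_{j,j} = (n-j)t_j$, $Z_{i,j} = -t_j$ for $i>j$, and zeros for $i<j$, so
\[
\alpha_j \;=\; \frac{t_j}{n}\bigg[(n-j)\delta_j - \sum_{i=j+1}^n \delta_i\bigg].
\]
Substituting $\delta_i = n+1-2i$ from \cref{tab: deltas} (or, equivalently, using $\sum_{i=1}^n\delta_i = 0$ to rewrite the tail sum as $-\sum_{i=1}^j\delta_i$), a direct computation yields $\sum_{i=j+1}^n\delta_i = -j(n-j)$, so the bracketed quantity factors cleanly as $(n-j)(n-j+1)$. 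Hence $\alpha_j = t_j(n-j)(n-j+1)/n$, and the defining identity $t_j^2(n-j)(n-j+1) = n$ collapses this to $\alpha_j = 1/t_j = \sqrt{(n-j)(n-j+1)/n}$.

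The only real obstacle is the bookkeeping in the second step: one must recognise that the seemingly awkward combination $(n-j)\delta_j - \sum_{i>j}\delta_i$ telescopes into the single product $(n-j)(n-j+1)$, which then matches the normalisation $n/t_j^2$ exactly so that the final expression simplifies to $1/t_j$. Everything else is an immediate consequence of the spectral structure of $\operatorname{Lap}(K_n)$ and the defining properties of the Moore--Penrose inverse.
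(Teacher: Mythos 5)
Your proposal is correct, and the second half (the computation of $\alpha_j$ from the sparse decomposition in \cref{eq: Z complete}) is essentially identical to the paper's: same expansion $\alpha_j=\tfrac{t_j}{n}\bigl[(n-j)\delta_j-\sum_{i=j+1}^n\delta_i\bigr]$, same evaluation of the tail sum, same factorization into $(n-j)(n-j+1)$ and cancellation against $t_j^2$. Where you differ is in establishing $Z^\dagger=\tfrac1n Z^*$. You exploit the spectral structure of $\operatorname{Lap}(K_n)$ to get $Z^*Z=nI_{n-1}$ for the eigenvector-based decomposition of \cref{eq: Zeig}, apply $Z^\dagger=(Z^*Z)^{-1}Z^*$ from \cref{rem: Moore--Penrose inverse}, and then transfer the identity to an arbitrary onto decomposition via the orthogonal-equivalence of such decompositions; this last step tacitly uses $(ZO)^\dagger=O^*Z^\dagger$ for orthogonal $O$, which is true but worth stating. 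The paper instead argues directly for an arbitrary onto decomposition: since $Z^\dagger Z=I_{n-1}$ and $\ker Z^\dagger=\ker Z^*=\operatorname{span}\{\bm{1}_n\}$ forces $Z^\dagger J_n=0$, one gets $Z^*=Z^\dagger ZZ^*=Z^\dagger(nI_n-J_n)=nZ^\dagger$ in one line, with no need for the transfer argument. Both routes rest on the same underlying fact (all nonzero eigenvalues of $\operatorname{Lap}(K_n)$ equal $n$); the paper's is marginally more economical, while yours makes the spectral reason more visible.
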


\begin{proof}
    Note that $\operatorname{Lap}(G)=nI_n-J_n$, where $J_n$ denotes the square matrix of order $n$ whose entries are all one. By \crefpart{fact: Moore--Penrose}{fact: kernel} and~\cref{fact: Zdecom}, one has $\ker Z^\dagger=\ker Z^\ast=\operatorname{span}\{\bm{1}_n\}$, so $Z^\dagger J_n=0$. Then, $$Z^*=Z^\dagger ZZ^*=Z^\dagger\operatorname{Lap}(G)=Z^\dagger(nI_n-J_n)=nZ^\dagger-Z^\dagger J_n=nZ^\dagger.$$

    Now, suppose that $Z$ is given by \cref{eq: Z complete}. Then, for all $j=1,\ldots,n-1$,
            \[\alpha_j=\left(Z^\dagger\bm{\delta}\right)_j=\left(\frac{1}{n}Z^\ast\bm{\delta}\right)_j=\frac{1}{n}\left((n-j)t_j\delta_j - t_j\sum_{i=j+1}^n\delta_i\right)=\frac{t_j}{n}\left((n-j)\delta_j-\sum_{i=j+1}^n\delta_i\right).\]
    Since $\delta_i=n+1-2i$ (see \cref{tab: deltas}), we have
    $$\sum_{i=j+1}^n\delta_i=\sum_{i=j+1}^n(n+1-2i)=(n+1)(n-j)-2\sum_{i=j+1}^n i=j(j-n),$$
    so
    \[\alpha_j=\frac{t_j}{n}\big((n+1-2j)(n-j)-j(j-n)\big)=t_j\frac{(n-j)(n-j+1)}{n}=\frac{t_j}{t_j^2}=\frac{1}{t_j},\]
    which gives $\alpha_j=\frac{1}{t_j}=\sqrt{\frac{(n-j)(n-j+1)}{n}}$.
\end{proof}

\subsection{Computations of the subspace \texorpdfstring{$E$}{E}}
In this subsection, we derive the subspaces $E=\ot{Z^\dagger}(\mathcal{U}^\perp\cap \Delta_n^\perp)$ from \cref{eq: E} for the graphs in \cref{tab: deltas}.

\paragraph{Complete} Recall that \crefpart{fact: Zdecom}{fact: Zdecom_ii} provides an expression for an onto decomposition $Z$ of the Laplacian. Furthermore, by  \cref{cor: nZZd = Lap} we have that $Z^\dagger=\tfrac{1}{n}Z^*$. Thus, we obtain
\begin{equation}\label{eq: E complete}
E=\left\{\mathbf{e}\in\Hi^{n-1}\;\Biggl\lvert\; \begin{array}{l}
e_j = t_j\left((n-j+1)u_j+\sum_{k=1}^{j-1}u_k\right),\text{ for all } j=1,\ldots,n-1;\\
 \text{with } u_j\in U_j^\perp\text{ for }j=1,\ldots,n-1\text{ and }\sum_{j=1}^{n-1}u_j\in U_n^\perp
\end{array}\right\}.
\end{equation}
When $n=2$, this expression simplifies to $E=U_1^\perp\cap U_2^\perp$. This is the case in the Douglas--Rachford algorithm, analyzed in the next subsection. In general, we deduce the inclusion
\begin{equation}\label{eq:Einclusion}
E\subseteq U_1^\perp\times(U_1^\perp+U_2^\perp)\times\cdots\times(U_1^\perp+\cdots +U_{n-2}^\perp)\times((U_1^\perp+\cdots +U_{n-1}^\perp)\cap U_n^\perp).
\end{equation}

\paragraph{Sequential} This graph is a tree, so we can take $Z$ as the incidence matrix (see \crefpart{fact: Zdecom}{fact: Zdecom_i}), which is given by: $Z_{j,j}=1$, $Z_{j+1,j}=-1$, for $j=1,\ldots,n-1$, and zero otherwise. Thus, using the relation $\bm{e}\in E\Leftrightarrow \ot{Z}\bm{e}\in\mathcal{U}^\perp$ provided by \cref{rem: computation E}, we deduce
\begin{equation}\label{eq: E sequential}
\begin{aligned}
E&=\{\bm{e}\in\Hi^{n-1}\mid e_1\in U_1^\perp, e_2-e_1\in U_2^\perp,\ldots, e_{n-1}-e_{n-2}\in U_{n-1}^\perp,-e_{n-1}\in U_n^\perp\},
\end{aligned}
\end{equation}
which also satisfies \cref{eq:Einclusion}.

\paragraph{Ring}
The Laplacian of the ring graph is a circulant matrix, whose first row is given by $(2,-1,0,\ldots,0,-1)\in\R^n$.
Hence, an onto decomposition is provided by \crefpart{fact: Zdecom}{fact: Zdecom_iii}, with eigenvalues
$$\lambda_j=2-\cos\left(\frac{2\pi j}{n}\right)-\cos\left(\frac{2\pi j(n-1)}{n}\right)=2\left(1-\cos\left(\frac{2\pi j}{n}\right)\right)=4\sin^2\left(\frac{\pi j}{n}\right),$$
for $j=1,\ldots,n-1$. Therefore, the subspace $E$ can be computed as (recall \cref{eq: E})
$$E=\ot{Z^\dagger}(\mathcal{U}^\perp\cap \Delta_n^\perp),$$
where $Z^\dagger$ is given by \cref{eq: Zdagger}, that is,
$$Z^\dagger_{j,i}=\frac{\cos\left(\frac{2\pi (i-1)j}{n}-\frac{\pi}{4}\right)}{\sqrt{n\left(1- \cos\left(\frac{2\pi j}{n}\right)\right)}}=\frac{\sin\left(\frac{2\pi (i-1)j}{n}+\frac{\pi}{4}\right)}{\sqrt{2n}\sin\left(\frac{\pi j}{n}\right)},$$
and $Z_{i,j}=\lambda_j Z^\dagger_{j,i}$, for $j=1,\ldots,n-1,\, i=1,\ldots,n$.

\paragraph{Parallel up}
The graph is also a tree, so we can take $Z$ as the incidence matrix, which has the form
\[Z=\begin{bmatrix}
\bm{1}_{n-1}^* \\
-I_{n-1}
\end{bmatrix}.\]
Then, $\bm{e}\in E\Leftrightarrow  \ot{Z}\bm{e}\in \mathcal{U}^\perp\Leftrightarrow e_j\in U_{j+1}^\perp$ for all $j=1,\ldots,n-1$ and $\sum_{j=1}^{n-1} e_j\in U_1^\perp$. In particular, $\sum_{j=1}^{n-1} e_j\in U_1^\perp$ is equivalent to say that $\bm{e}\in\Delta_{n-1}^\perp+(U_1^\perp\times \{\bm{0}_{n-2}\})$. Therefore,
\begin{equation}\label{eq: E parallel up}
E=(U_2^\perp\times\cdots\times U_n^\perp)\cap (\Delta_{n-1}^\perp+(U_1^\perp\times \{\bm{0}_{n-2}\})).
\end{equation}

\paragraph{Parallel down}
Notice that
\begin{equation}\label{eq: Z parallel down}
    Z=\begin{bmatrix}
        I_{n-1} \\
        -\bm{1}_{n-1}^*
        \end{bmatrix}.
\end{equation}
Hence, $\bm{e}\in E\Leftrightarrow e_j\in U_j^\perp$ for all $j=1,\ldots,n-1$ and $\sum_{j=1}^{n-1} e_j\in U_n^\perp$. This gives

\begin{equation}\label{eq: E parallel down}
    E=(U_1^\perp\times\cdots\times U_{n-1}^\perp)\cap (\Delta_{n-1}^\perp+(\{\bm{0}_{n-2}\}\times U_n^\perp)).
\end{equation}

\subsection{Specific algorithms}

Finally, we can apply \cref{th: projection Fix T tilde} to compute the projection onto $\Fix \widetilde{\mathcal{T}}$ and \cref{cor: M-projection Fix T} to obtain the $\mathcal{M}$-projection onto $\Fix \mathcal{T}$. Let us apply these results to some specific examples of splitting algorithms that appear in the literature, most of which were developed prior to the graph framework.

\paragraph{Douglas--Rachford}
This splitting algorithm, developed in \cite{LM79}, can be described as a graph-scheme by taking $G=G'$ as the unique algorithmic graph of order two, consisting of a unique edge connecting nodes $1$ and $2$. Since it is a tree, we can apply \cref{prop: alphas tree} and deduce that $\alpha=1$. Also, as $G'$ is complete, we have seen in \cref{eq: E complete} that $E=U_1^\perp\cap U_2^\perp$. Hence, by \cref{prop: spec II}, we have
\begin{align*}
        \Fix \mathcal{T}&=\left( U^3\cap\Delta_3\right)\oplus\big(\{0\}^2\times (U_1^\perp\cap U_2^\perp)\big),\\
        \Fix \widetilde{\mathcal{T}}&= (U_1\cap U_2)\oplus (U_1^\perp\cap U_2^\perp).
    \end{align*}
Therefore, for any $v\in\Hi$, we have
\begin{equation}\label{eq: DR}
	P_{\Fix \widetilde{\mathcal{T}}}(v)=P_{U_1\cap U_2}(v)+P_{U_1^\perp\cap U_2^\perp}(v),
\end{equation}
while for any $(w_1,w_2,v)\in\Hi^3$, we have $y:=\mathcal{C}^*(w_1,w_2,v)=w_1-w_2+v$, so \cref{cor: M-projection Fix T} yields
\begin{equation}\label{eq: M-projection DRS}
P^\mathcal{M}_{\Fix \mathcal{T}}\begin{bmatrix}
w_1 \\ w_2 \\ v
\end{bmatrix}=\begin{bmatrix}
P_{U_1\cap U_2}(y) \\
P_{U_1\cap U_2}(y) \\
P_{U_1\cap U_2}(y)+P_{U_1^\perp\cap U_2^\perp}(y)
\end{bmatrix}.
\end{equation}

\begin{remark}
    The $\mathcal{M}$-projection for the Douglas--Rachford was also obtained in \cite[Equations (39a) and (39b)]{bauschke-fixedpoints}, although slightly distinct from \cref{eq: M-projection DRS}. This is due to the choice of the operators $\mathcal{A}$ and $\mathcal{M}$. To see this, let us follow the construction from \cite[Section 5.1]{bauschke-fixedpoints} (originally from \cite[Section 3]{degenerate-ppp}). One defines $\mathcal{C}:=\ot Z$, hence $\mathcal{M}=\ot{\Lap(G')}$, and
    \[\mathcal{A}:=\Diag(A_1,A_2^{-1})+\ot{P(G)}-\ot{\Lap(G')}=\begin{bmatrix}
	A_1 & \Id \\ -\Id & A_2^{-1}
    \end{bmatrix}.\]
    With this, for any $(w_1,w_2)\in\Hi^2$ and denoting by $y:=\mathcal{C}^*(w_1,w_2)=w_1-w_2$, the $\mathcal{M}$-projection given in \cite[Equation (39b)]{bauschke-fixedpoints} is computed as
    \[P_{\Fix\mathcal{T}}^\mathcal{M}(w_1,w_2)=\begin{bmatrix}P_{U_1\cap U_2}(y) \\-P_{U_1^\perp\cap U_2^\perp}(y)\end{bmatrix}.\]
    This formula compared with \cref{eq: M-projection DRS} is reduced by one component and differs in the second component. On the other hand, notice that $\mathcal{C}^*P_{\Fix\mathcal{T}}^\mathcal{M}(w_1,w_2)$ coincides with \cref{eq: DR}. Therefore, while in our work the graph configuration is used to develop the algorithm (so $\mathcal{A}$ and $\mathcal{M}$ are operators in$~\Hi^3$), in \cite[Section 5.1]{bauschke-fixedpoints} a simplified and equivalent version of these operators were chosen, with $\mathcal{A}$ and $\mathcal{M}$ being defined in $\Hi^2$.
\end{remark}

\paragraph{Generalized Ryu}

Although Ryu's scheme was originally proposed in~\cite{ryu20} for 3 maximally monotone operators, it was shown in~\cite{tam2023frugal,graph-drs} that it can be naturally extended for any number of operators. In particular, it was noted in \cite[pp.~1579--1580]{graph-drs} that it can be described as a graph-devised algorithm by letting $G$ to be the complete graph of order $n$ and $G'$ the parallel down. In this case, it is direct to compute $\bm{\alpha}$ using the equation $Z\bm{\alpha}=\bm{\delta}$, by \crefpart{prop: v new constraints}{prop: alpha}. This gives, by~\cref{eq: Z parallel down}, $\alpha_j=\delta_j=n+1-2j$ for all $j=1,\ldots,n-1$ (recall \cref{tab: deltas}).

Hence, for any $\bm{v}\in\Hi^{n-1}$, one has from \cref{th: projection Fix T tilde} that
\begin{align*}
P_{\Fix \widetilde{\mathcal{T}}}(\bm{v})&=\ot{\bm{\alpha}}P_U(\ot{\bm{\alpha}^\dagger}\bm{v})+P_E(\bm{v})\\
&=\left(\left(\frac{n+1-2i}{\norm{\bm{\alpha}}^2}\right)P_ U\left(\textstyle\sum_{j=1}^{n-1}(n+1-2j)v_j\right)\right)_{i=1,\ldots,n-1}+P_E(\bm{v}),
\end{align*}
where
$$\norm{\bm{\alpha}}^2=\sum_{j=1}^{n-1}(n+1-2j)^2=\frac{n-1}{3}\left((n-1)^2+2\right).$$
Further, for any $(\bm{w},\bm{v})\in\Hi^n\times\Hi^{n-1}$, it holds (see \cref{cor: M-projection Fix T}) that
$$P^\mathcal{M}_{\Fix \mathcal{T}}\begin{bmatrix}
\bm{w} \\ \bm{v}
\end{bmatrix}=\begin{bmatrix}
\ot{(\bm{1}_n)}P_U(\ot{\bm{\alpha}^\dagger} \bm{y}) \\
P_{\Fix \widetilde{\mathcal{T}}}(\bm{y})
\end{bmatrix}=\begin{bmatrix}
\ot{(\bm{1}_n)}P_U\left(\frac{1}{\norm{\bm{\alpha}}^2}\sum_{j=1}^{n-1}(n+1-2j)y_j\right) \\
P_{\Fix \widetilde{\mathcal{T}}}(\bm{y})
\end{bmatrix},$$
with $\bm{y}:=\mathcal{C}^*(\bm{w},\bm{v})=(w_j-w_n)_{j=1,\ldots,n-1}+\bm{v}$.

For the case of the original splitting algorithm introduced by Ryu, where $n=3$, we obtain, using also \cref{eq: E parallel down},
$$
    \bm{\alpha}=(2,0)\in\R^2\quad\text{and}\quad
    E=(U_1^\perp\times U_2^\perp)\cap(\Delta_2^\perp+(\{0\}\times U_3^\perp))\subseteq\Hi^2.
$$
Thus, the projections onto the fixed points given above simplify to
\begin{align*}
    P_{\Fix \widetilde{\mathcal{T}}}(v_1,v_2)&=(P_U(v_1),0)+P_E(v_1,v_2),\\
    P^\mathcal{M}_{\Fix \mathcal{T}}(w_1,w_2,w_3,v_1,v_2)&=\left(\frac{1}{2}P_U(y_1),\frac{1}{2}P_U(y_1),\frac{1}{2}P_U(y_1),P_{\Fix \widetilde{\mathcal{T}}}(y_1,y_2)\right),
\end{align*}
with $y_1=w_1-w_3+v_1$ and $y_2=w_2-w_3+v_2$. These expressions coincide with those in~\cite[Corollary~5.6]{bauschke-fixedpoints}.

\paragraph{Malitsky--Tam}

As shown in~\cite[p.~1580]{graph-drs}, this algorithm can be described by a graph configuration $(G,G')$, where $G$ is the ring graph and $G'$ is the sequential graph. Taking into account \cref{tab: deltas} and \crefpart{prop: v new constraints}{prop: alpha}, one can compute $\bm{\alpha}$ as follows:
\[Z\bm{\alpha}=\bm{\delta}\Leftrightarrow\left\{\begin{array}{l}
\alpha_1=\delta_1=2\\
\alpha_{j+1}-\alpha_j=\delta_j=0,\; j=1,\ldots,n-2\\
-\alpha_{n-1}=\delta_{n-1}=-2
\end{array}\right\}\Leftrightarrow\bm{\alpha}=2\cdot\bm{1}_{n-1}.\]
Notice then that $\norm{\bm{\alpha}}^2=4(n-1)$, so $\bm{\alpha}^\dagger=\frac{1}{2(n-1)}\bm{1}_{n-1}^*$. Hence, for any $\bm{v}\in\Hi^{n-1}$, it holds (see \cref{th: projection Fix T tilde}) that
\[P_{\Fix \widetilde{\mathcal{T}}}(\bm{v})=\ot{(\bm{1}_{n-1})}P_ U\left(\textstyle\sum_{j=1}^{n-1}\frac{v_j}{n-1}\right)+P_E(\bm{v}),\]
where $E$ is given by~\cref{eq: E sequential}.

Further, for any $\bm{w}\in\Hi^n$ and $\bm{v}\in\Hi^{n-1}$, letting $\bm{y}:=\mathcal{C}^\ast(\bm{w},\bm{v})=(w_{j}-w_{j+1})_{j=1,\ldots,n-1}+\bm{v}$, we obtain from \cref{cor: M-projection Fix T} that
\[P^\mathcal{M}_{\Fix \mathcal{T}}\begin{bmatrix}
    \bm{w} \\
    \bm{v}
    \end{bmatrix}=\begin{bmatrix}
    \ot{(\bm{1}_n)} \frac{1}{2}P_U\left(\sum_{j=1}^{n-1} \frac{y_j}{n-1}\right) \\
    \ot{(\bm{1}_{n-1})}P_U\left(\sum_{j=1}^{n-1} \frac{y_j}{n-1}\right)+P_E(\bm{y})
    \end{bmatrix}.\]
Both projections coincide with those derived in~\cite[Corollary~5.8]{bauschke-fixedpoints}.

\paragraph{Parallel up}
The graph configuration $(G,G')$ where $G=G'$ are parallel up leads to the splitting method proposed in \cite[Theorem 5.1]{campoy} and~\cite[Equation~(9.2)]{condat2023proximal}. Since these graphs are trees, by \cref{prop: alphas tree}, ${\bm{\alpha}=\bm{1}_{n-1}}$. Thus, for any $\bm{w}\in\Hi^{n}$ and $\bm{v}\in\Hi^{n-1}$, we obtain by \cref{th: projection Fix T tilde,cor: M-projection Fix T} that
\begin{equation}\label{eq: projections parallel up}
    \begin{aligned}
        P_{\Fix \widetilde{\mathcal{T}}}(\bm{v})&=\ot{(\bm{1}_{n-1})}P_ U\left(\textstyle\sum_{j=1}^{n-1} \frac{v_j}{n-1}\right)+P_E(\bm{v}),\\
        P^\mathcal{M}_{\Fix \mathcal{T}}\begin{bmatrix}
            \bm{w} \\
            \bm{v}
            \end{bmatrix}&=\ot{(\bm{1}_{2n-1})}P_U\left(\textstyle\sum_{j=1}^{n-1} \frac{y_j}{n-1}\right)+ \begin{bmatrix}
            \bm{0}_n \\
            P_E(\bm{y})
            \end{bmatrix},\ \text{with }\bm{y}:=\mathcal{C}^*(\bm{w},\bm{v})=\ot{Z^*}\bm{w}+\bm{v},
    \end{aligned}
\end{equation}
where $E$ is given by~\cref{eq: E parallel up}. These expressions improve upon \cite[Theorem~4.6]{bauschke-splitting}, where no explicit formula for the projection was derived.

\paragraph{Parallel down}

Analogously to the previous scheme (see \cite[Equation~(9.3)]{condat2023proximal}), the method is devised by the graph configuration $G=G'$ being parallel down graphs. Again, we have $\bm{\alpha}=\bm{1}_{n-1}$ by \cref{prop: alphas tree}.
Finally, the projections onto the fixed points can be computed as in \cref{eq: projections parallel up}, using in this case the set $E$ given by~\cref{eq: E parallel down}.

\paragraph{Sequential}

This method, developed in \cite[\S~3.1.2]{degenerate-ppp}, is defined by the graph configuration $G=G'$ being sequential graphs, so $\bm{\alpha}=\bm{1}_{n-1}$ by \cref{prop: alphas tree}. Hence, the projections can be computed as in~\cref{eq: projections parallel up} but with $E$ given by~\cref{eq: E sequential}.

\paragraph{Complete}

A new splitting method was introduced in \cite{graph-drs} for $n=3$ through the graph configuration $G=G'$ being complete graphs. A generalized version of this method for any $n$ (including also cocoercive operators) was proposed in \cite{graph-fb}, by considering the onto decomposition $Z$ given in \cref{fact: Zdecom}. Thanks to \cref{cor: nZZd = Lap}, both $Z^\dagger$ and $\bm{\alpha}$ can be explicitly determined, having
$$\norm{\bm{\alpha}}^2=\frac{1}{n}\sum_{j=1}^{n-1}(n-j)(n-j+1)=\frac{n^2-1}{3}.$$
Hence, for any $(\bm{w},\bm{v})\in\Hi^{n}\times\Hi^{n-1}$, we deduce that the projections are given by
$$
    \begin{aligned}
        P_{\Fix \widetilde{\mathcal{T}}}(\bm{v})&=\left(\textstyle\frac{3}{(n^2-1)t_i}P_ U(\textstyle\sum_{j=1}^{n-1} \frac{v_j}{t_j})\right)_{i=1,\ldots,n-1}+P_E(\bm{v}),\\
        P^\mathcal{M}_{\Fix \mathcal{T}}\begin{bmatrix}
            \bm{w} \\
            \bm{v}
            \end{bmatrix}&=\begin{bmatrix}
                \frac{3}{n^2-1}\ot{(\bm{1}_{n})}P_U(\textstyle\sum_{j=1}^{n-1} \frac{y_j}{t_j}) \\
            P_{\Fix \widetilde{\mathcal{T}}}(\bm{y})
            \end{bmatrix},\ \text{with }\bm{y}:=\mathcal{C}^*(\bm{w},\bm{v})=\ot{Z^*}\bm{w}+\bm{v},
    \end{aligned}
$$
where $E$ is given by~\cref{eq: E complete}.\smallskip

For the reader's convenience, we finish this section by summarizing in \cref{tab:alg_alpha_delta} the choice of graphs $(G,G')$ for all the algorithms analyzed, together with the values of $\bm{\alpha}$ and the specific limit point $\widetilde{u}$ of the shadow sequence $(\bm{x}^k)_{k\in\mathbb{N}}$ in \cref{alg:Ttilde}, established in \crefpart{th: convergence reduced}{th: convergence reduced x}. We also illustrate these results in \cref{fig:planes}, where we represent the sequences generated by the six algorithms displayed in \cref{tab:alg_alpha_delta} for the problem of finding the point in the intersection of three planes in $\R^2$, showing the convergence to the limit points obtained by our analysis. For a numerical study comparing their performance in this setting, see~\cite{graphnumerical}.

\begin{table}[ht!]
\centering\small
\begin{tabular}{ccccc}
\toprule
Algorithm & $G$ & $G'$ &${\alpha}_j$ & Limit point of \cref{alg:Ttilde} \\ \midrule
Douglas--Rachford & Sequential & Sequential & $1$ & $P_{U_1\cap U_2}(v)$\\
Generalized Ryu & Complete & Parallel down & $n+1-2j$ & $\frac{3\left((n-1)^2+2\right)}{n-1}P_ U\left(\textstyle\sum_{j=1}^{n-1}\alpha_jv_j\right)$ \\
Malitsky--Tam & Ring & Sequential & $2$ & $\frac12P_ U\left(\textstyle\sum_{j=1}^{n-1}\frac{v_j}{n-1}\right)$ \\
Parallel up & Parallel up & Parallel up & $1$ & $P_ U\left(\textstyle\sum_{j=1}^{n-1} \frac{v_j}{n-1}\right)$ \\
Parallel down & Parallel down & Parallel down & $1$ & $P_ U\left(\textstyle\sum_{j=1}^{n-1} \frac{v_j}{n-1}\right)$ \\
Sequential & Sequential & Sequential & $1$ & $P_ U\left(\textstyle\sum_{j=1}^{n-1} \frac{v_j}{n-1}\right)$ \\
Complete & Complete &  Complete & $\textstyle\sqrt{\frac{(n-j)(n-j+1)}{n}}$ & $\textstyle\frac{3}{n^2-1}P_ U\left(\textstyle\sum_{j=1}^{n-1}\alpha_jv_j\right)$ \\
\bottomrule
\end{tabular}
\caption{Summary of the graph splitting algorithms considered of order $n$}
\label{tab:alg_alpha_delta}
\end{table}

\begin{figure}[htp!]

\centering
\def\wfg{0.33\textwidth}
\def\sfg{0.08\textwidth}
\setlength{\fboxsep}{0pt}
\includegraphics[width=.75\textwidth]{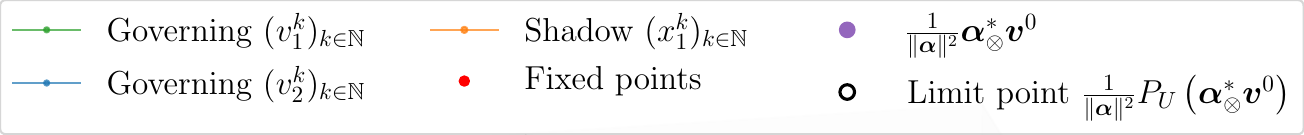}
\vspace{4mm}

  \begin{subfigure}{\wfg}
    \centering
    {\includegraphics[width=\linewidth]{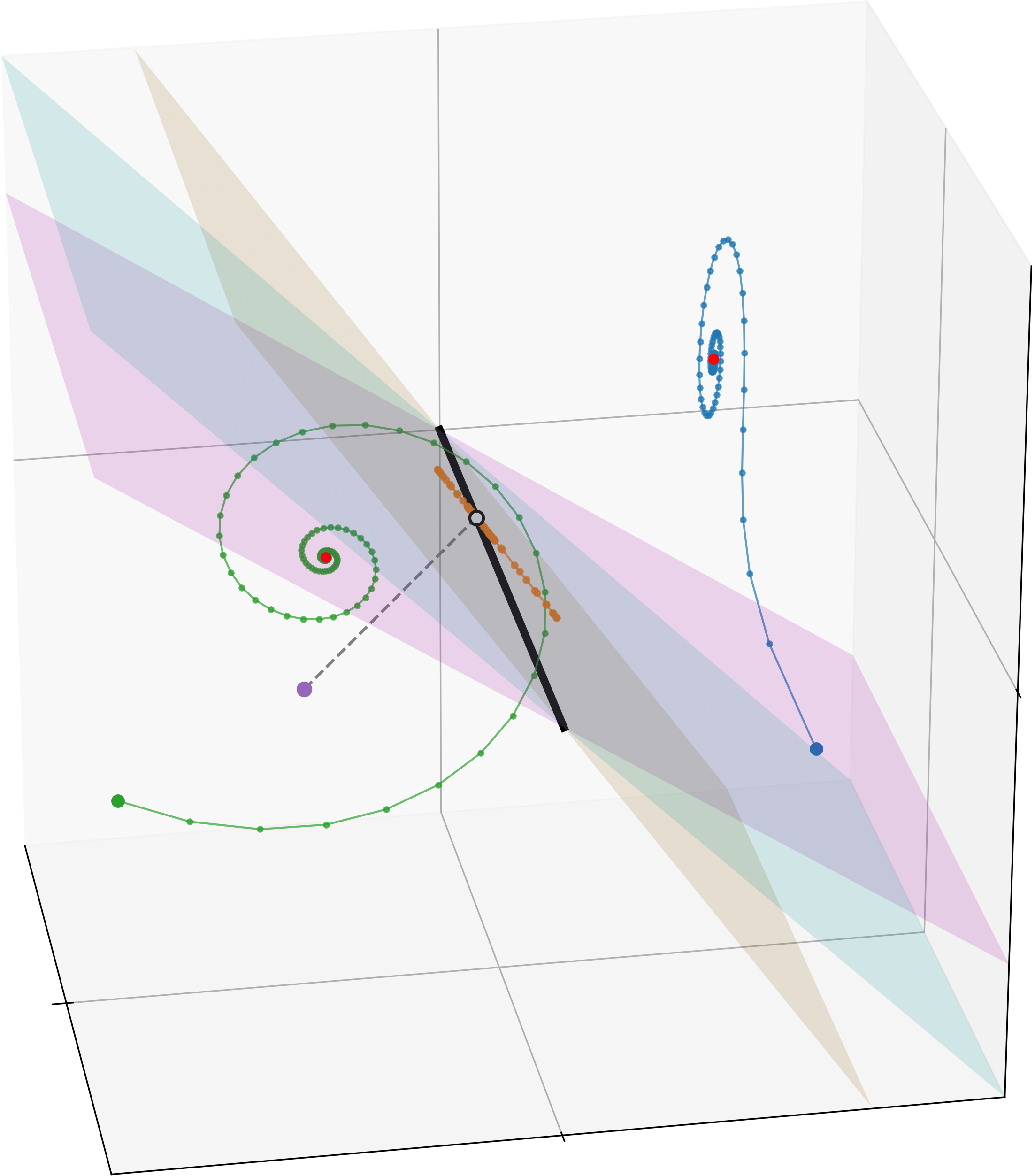}}
    \caption{Generalized Ryu}
  \end{subfigure}\hspace{\sfg}%
  \begin{subfigure}{\wfg}
    \centering
    {\includegraphics[width=\linewidth]{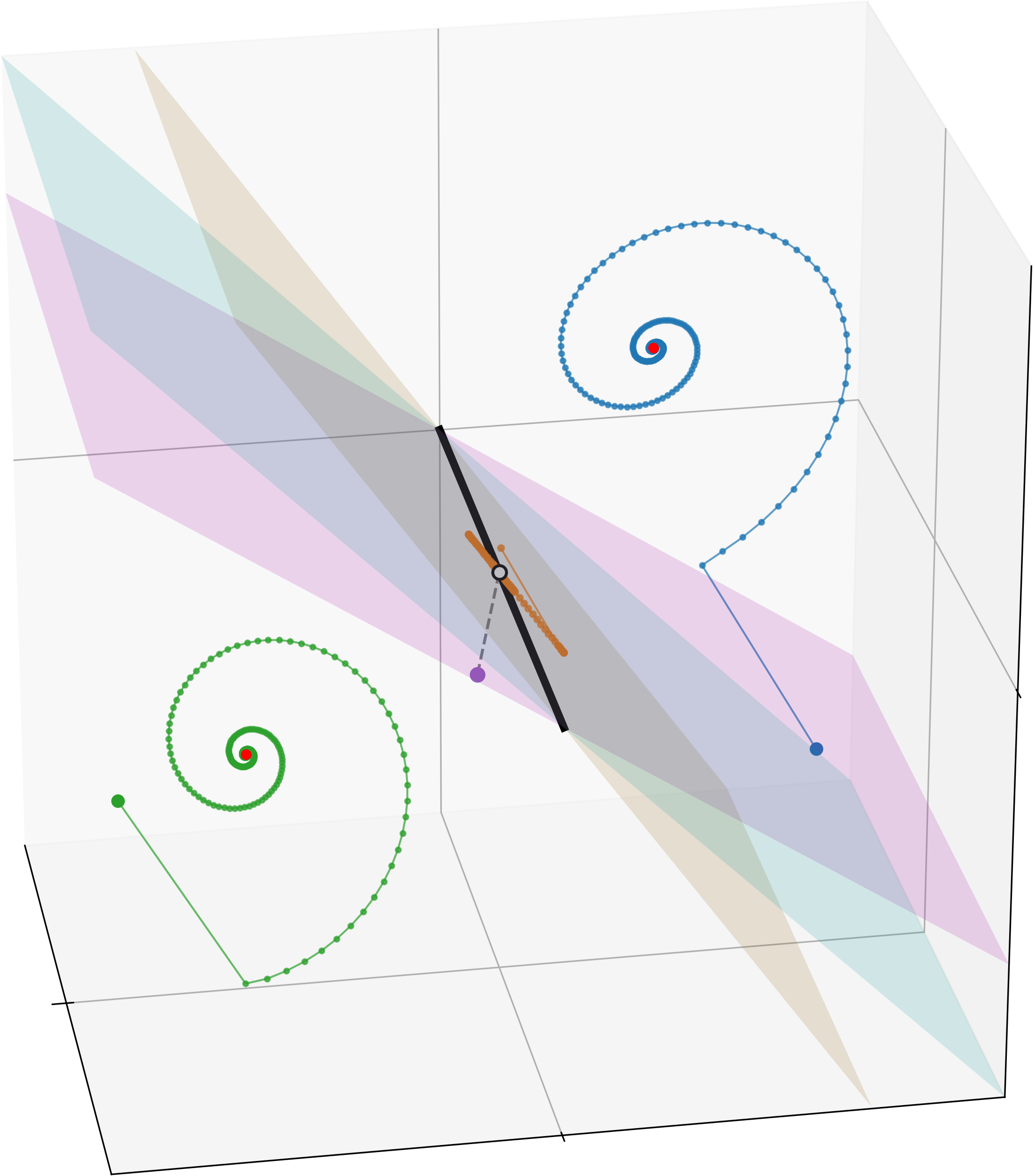}}
    \caption{Makitsky--Tam}
  \end{subfigure}
\vspace{5mm}

  \begin{subfigure}{\wfg}
    \centering
    {\includegraphics[width=\linewidth]{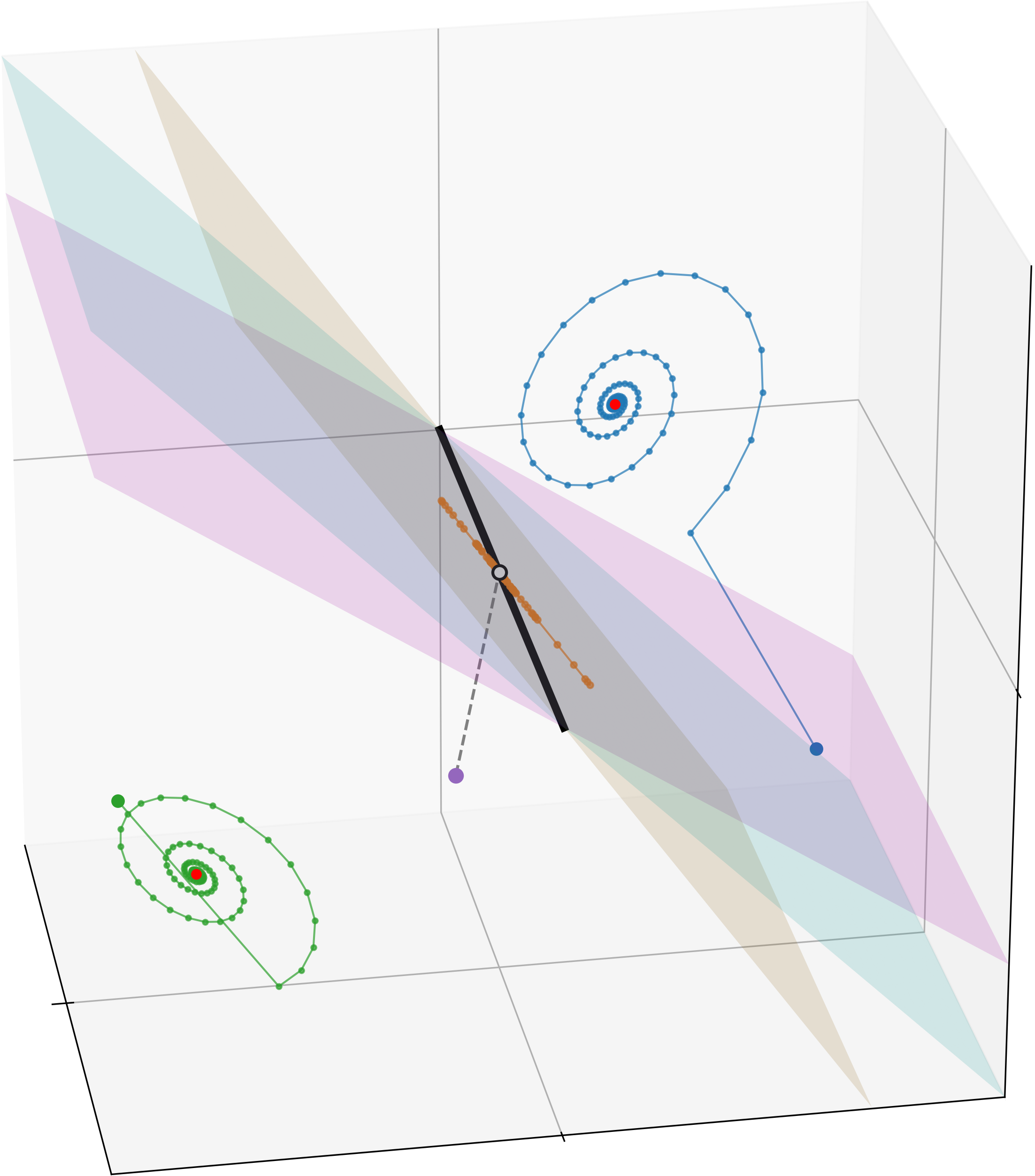}}
    \caption{Parallel up}
  \end{subfigure}\hspace{\sfg}%
  \begin{subfigure}{\wfg}
    \centering
    {\includegraphics[width=\linewidth]{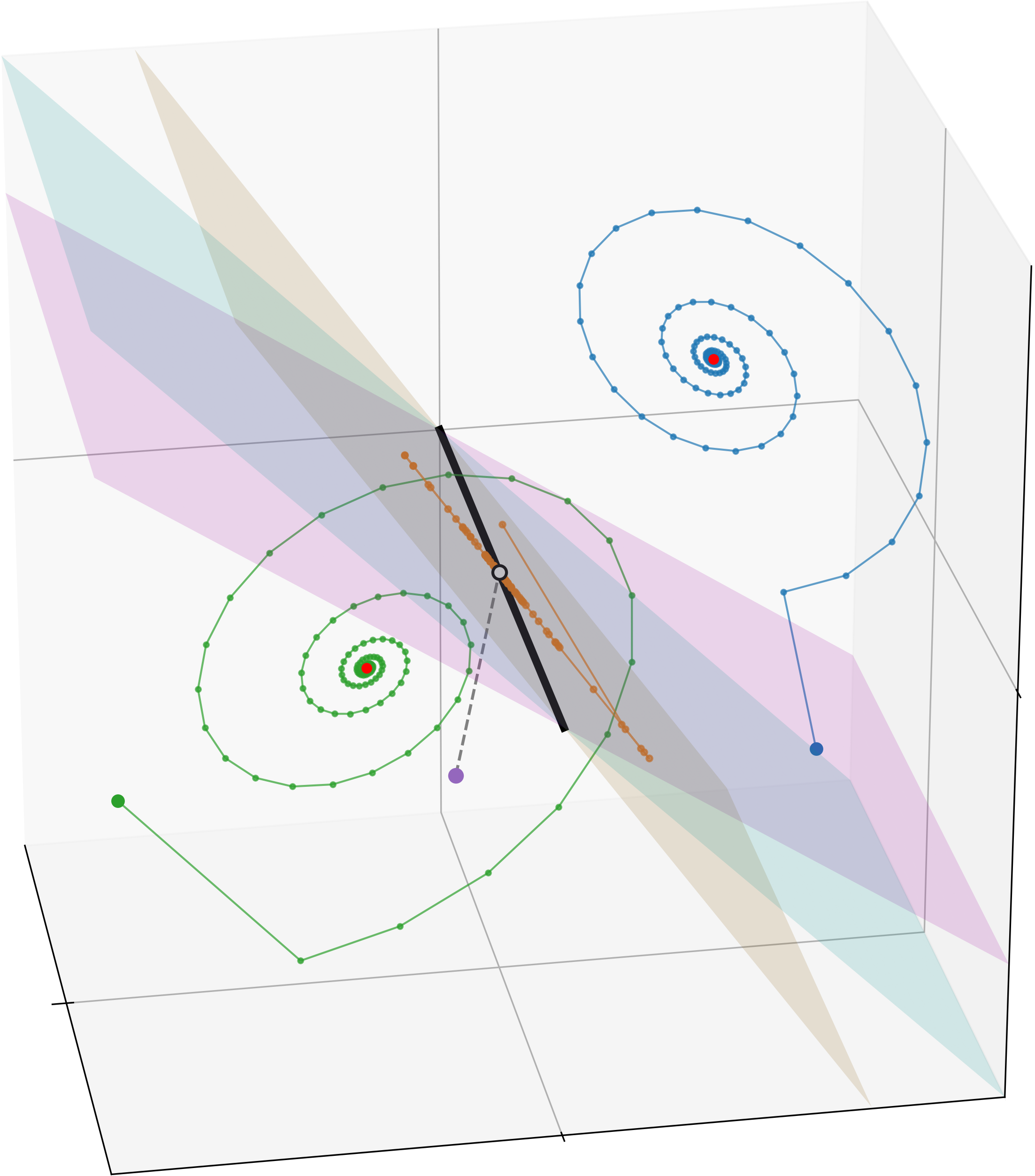}}
    \caption{Parallel down}
  \end{subfigure}
\vspace{5mm}

  \begin{subfigure}{\wfg}
    \centering
    {\includegraphics[width=\linewidth]{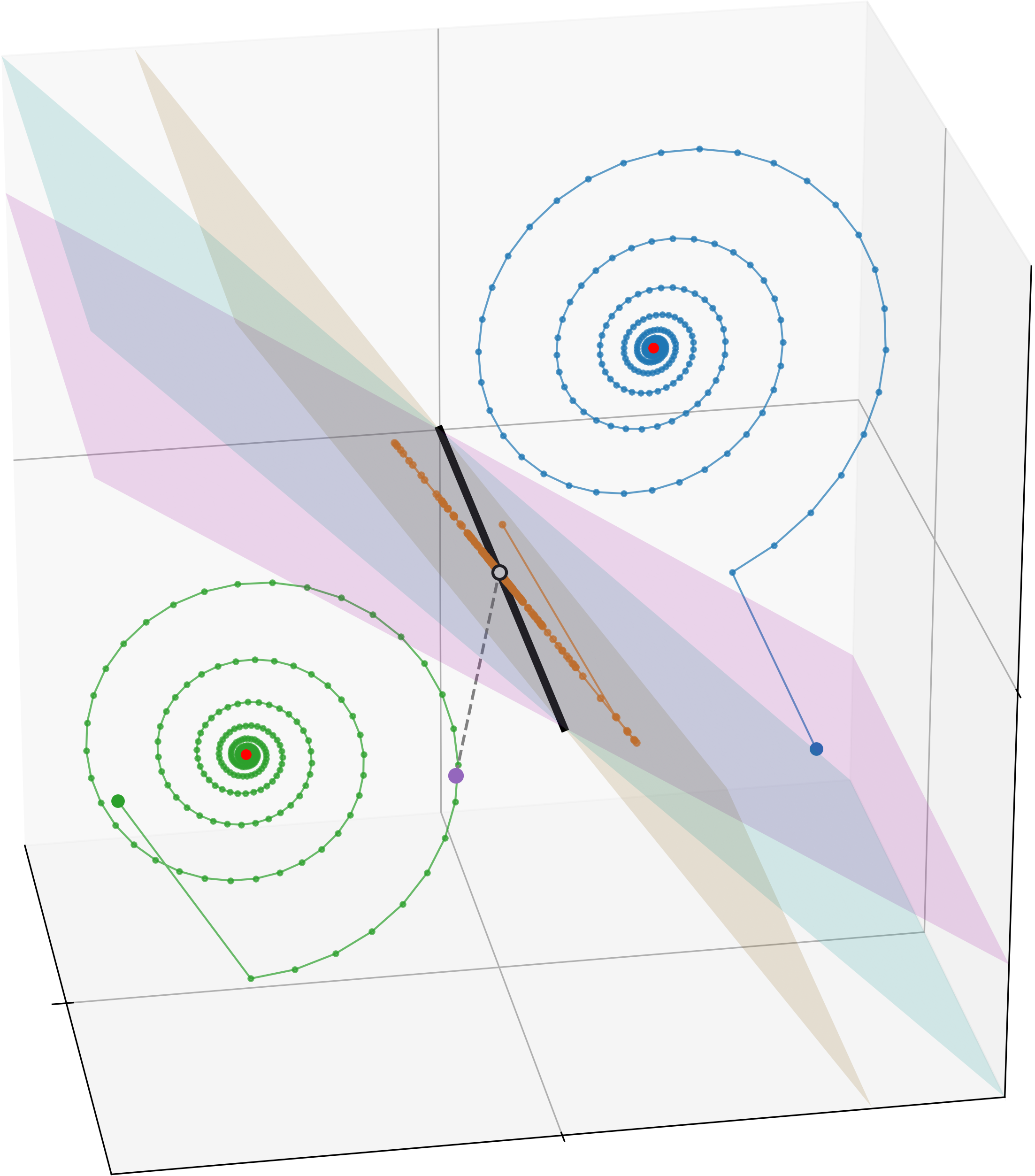}}
    \caption{Sequential}
  \end{subfigure}\hspace{\sfg}%
  \begin{subfigure}{\wfg}
    \centering
    {\includegraphics[width=\linewidth]{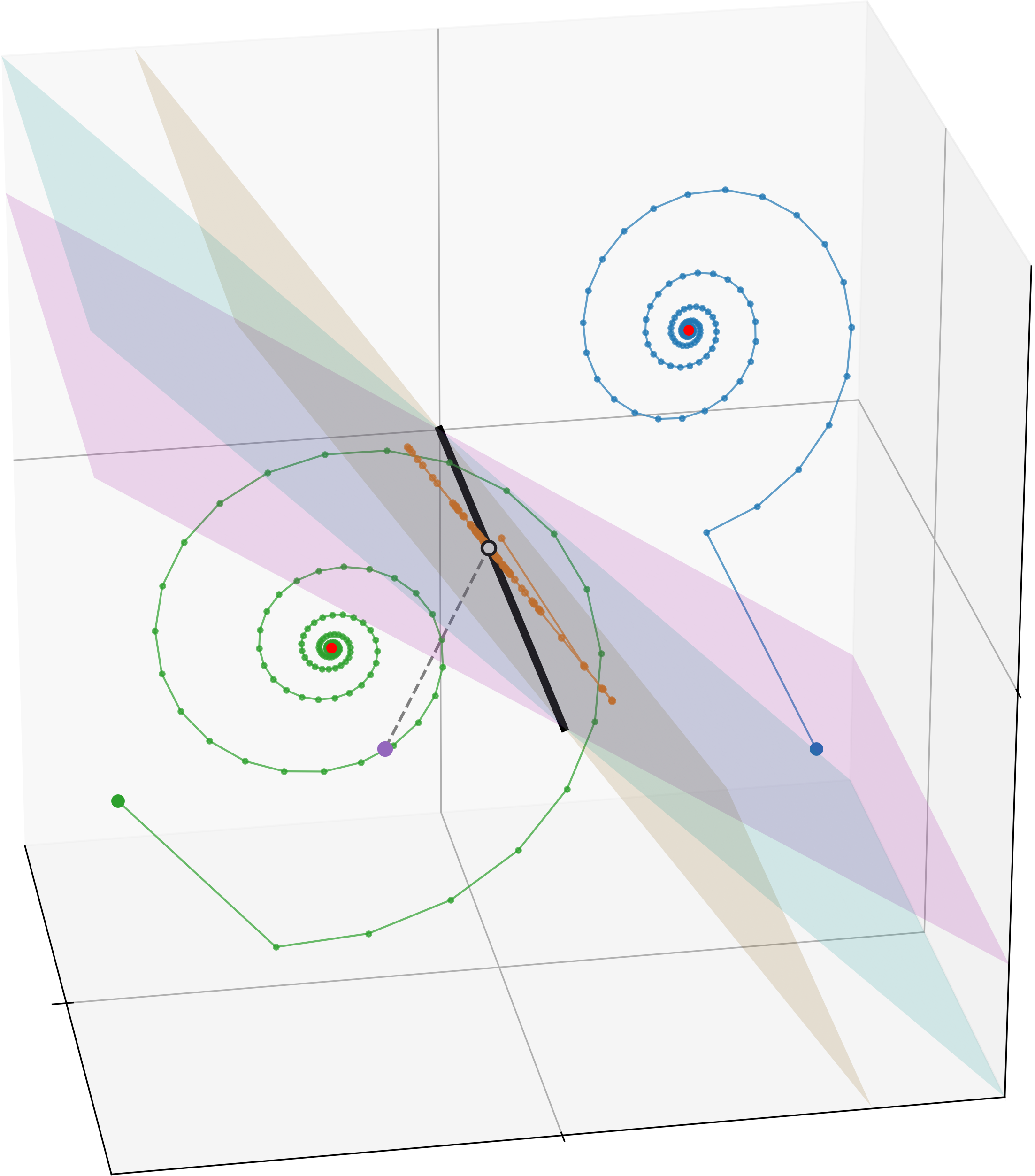}}
    \caption{Complete}
  \end{subfigure}
\vspace{2mm}

\caption{Behavior of six graph splitting algorithms applied to finding a point in the intersection of three planes in $\R^3$. Sequences and limit points are shown for the same pair of starting points}\label{fig:planes}
\end{figure}

\section{Conclusions}\label{sec: concl}

In this work, we have employed the graph-based framework of \cite{graph-drs} to unify and extend the analysis of preconditioned proximal point algorithms carried out in \cite{bauschke-fixedpoints}. By using this unified approach, we have derived general results that provide characterization of the limit points of different splitting algorithms. Overall, our main contributions are:

\begin{itemize}
\item Providing an explicit expression for the set of fixed points of the underlying operators defining the algorithms for general graph configurations.
\item Computing the limit points of an algorithm for a general graph configuration when the involved operators are normal cones of a closed linear subspaces.
\item Generalizing the results of \cite[Section 5]{bauschke-fixedpoints}, as well as conducting analogous results on additional splitting schemes.
\end{itemize}

In conclusion, we have shown how the use of the graph splitting framework streamlines the analysis of existing splitting algorithms, especially in the context of feasibility problems with linear subspaces, allowing to obtain results that apply broadly rather than requiring a separate inspection for each algorithm.

\paragraph{Acknowledgments}
The authors are very thankful to the anonymous referees for their helpful comments, which allowed us to improve the original manuscript.

\bibliographystyle{acm}
\bibliography{bibliography.bib}

@article{graph-fb,
author = {Arag\'{o}n-Artacho, Francisco J. and Campoy, Rub\'{e}n and L\'{o}pez-Pastor, C\'{e}sar},
title = {Forward-backward algorithms devised by graphs},
journal = {SIAM Journal on Optimization},
volume = {35},
number = {4},
pages = {2423-2451},
year = {2025},
doi = {10.1137/24M166721X},
URLx = {https://doi.org/10.1137/24M166721X},
eprint = {https://doi.org/10.1137/24M166721X},
abstract = {Abstract. In this work, we present a methodology for devising forward-backward methods for finding zeros in the sum of a finite number of maximally monotone operators. We extend the framework and techniques from [SIAM J. Optim., 34 (2024), pp. 1569–1594] to cover the case involving a finite number of cocoercive operators, which should be directly evaluated instead of computing their resolvent. The algorithms are induced by three graphs that determine how the algorithm variables interact with each other and how they are combined to compute each resolvent. The hypotheses on these graphs ensure that the algorithms obtained have minimal lifting and are frugal, meaning that the ambient space of the underlying fixed-point operator has minimal dimension and that each resolvent and each cocoercive operator is evaluated only once per iteration. This framework not only allows us to recover some known methods, but also to generate new ones, such as the forward-backward algorithm induced by a complete graph. We conclude with a numerical experiment showing how the choice of graphs influences the performance of the algorithms.}
}

@misc{graphnumerical,
      title={A comparative numerical study of graph-based splitting algorithms for linear subspaces},
      author={Francisco J. Aragón-Artacho and Rubén Campoy and Irene López-Larios and César López-Pastor},
      year={2026},
      eprint={2603.04231},
      archivePrefix={arXiv},
      primaryClass={math.NA},
      url={https://arxiv.org/abs/2603.04231},
      note={\emph{ArXiv:} \href{https://arxiv.org/abs/2603.04231}{2603.04231}},
}

@article{degenerate-ppp,
  author   = {Bredies, Kristian and Chenchene, Enis and Lorenz, Dirk A. and Naldi, Emanuele},
  title    = {Degenerate Preconditioned Proximal Point Algorithms},
  journal  = {SIAM Journal on Optimization},
  year     = {2022},
  volume   = {32},
  number   = {3},
  pages    = {2376-2401},
  doi       = {10.1137/21M1448112}
}

@article{graph-drs,
  author     = {Bredies, Kristian and Chenchene, Enis and Naldi, Emanuele},
  title      = {Graph and distributed extensions of the {D}ouglas--{R}achford method},
  journal    = {SIAM Journal on Optimization},
  year       = {2024},
  volume     = {34},
  number     = {2},
  pages      = {1569-1594},
  doi        = {10.1137/22M1535097}
}

@article{RockaProx,
  author={Rockafellar, R. Tyrrell},
  title={Monotone operators and the proximal point algorithm},
  journal={SIAM Journal on Control and Optimization},
  year={1976},
  volume={14},
  number={5},
  pages={877--898},
  doi = {10.1137/0314056}
}

@article{malitsky2023resolvent,
  author={Malitsky, Yura and Tam, Matthew K.},
  title={Resolvent splitting for sums of monotone operators with minimal lifting},
  journal={Mathematical Programming},
  year={2023},
  volume={201},
  number={1},
  pages={231--262},
  doi={10.1007/s10107-022-01906-4}
}

@article{condat2023proximal,
  author={Condat, Laurent and Kitahara, Daichi and Contreras, Andr{\'e}s and Hirabayashi, Akira},
  title={Proximal splitting algorithms for convex optimization: A tour of recent advances, with new twists},
  journal={SIAM Review},
  year={2023},
  volume={65},
  number={2},
  pages={375--435},
  doi={10.1137/20M1379344}
}

@article{tam2023frugal,
  author={Tam, Matthew K},
  title={Frugal and decentralised resolvent splittings defined by nonexpansive operators},
  journal={Optimization Letters},
  year={2023},
  pages={1--19},
  doi={10.1007/s11590-023-02064-y}
}

@article{campoy,
  title={A product space reformulation with reduced dimension for splitting algorithms},
  author={Campoy, Rub{\'e}n},
  journal={Computational Optimization and Applications},
  volume={83},
  number={1},
  pages={319--348},
  year={2022},
  publisher={Springer}
}

@Article{bauschke-splitting,
  author   = {Bauschke, Heinz H. and Singh, Shambhavi and Wang, Xianfu},
  journal  = {SIAM Journal on Optimization},
  title    = {The Splitting Algorithms by {R}yu, by {M}alitsky–{T}am, and by {C}ampoy Applied to Normal Cones of Linear Subspaces Converge Strongly to the Projection onto the Intersection},
  year     = {2023},
  number   = {2},
  pages    = {739-765},
  volume   = {33},
  abstract = {Abstract. Finding a zero of a sum of maximally monotone operators is a fundamental problem in modern optimization and nonsmooth analysis. Assuming that the resolvents of the operators are available, this problem can be tackled with the Douglas–Rachford algorithm. However, when dealing with three or more operators, one must work in a product space with as many factors as there are operators. In groundbreaking recent work by Ryu and by Malitsky and Tam, it was shown that the number of factors can be reduced by one. A similar reduction was achieved recently by Campoy through a clever reformulation originally proposed by Kruger. All three splitting methods guarantee weak convergence to some solution of the underlying sum problem; strong convergence holds in the presence of uniform monotonicity. In this paper, we provide a case study when the operators involved are normal cone operators of subspaces and the solution set is thus the intersection of the subspaces. Even though these operators lack strict convexity, we show that striking conclusions are available in this case: strong (instead of weak) convergence and the solution obtained is (not arbitrary but) the projection onto the intersection. To illustrate our results, we also perform numerical experiments.},
  doi      = {10.1137/22M1483165},
  eprint   = {https://doi.org/10.1137/22M1483165},
  url      = {https://doi.org/10.1137/22M1483165},
}

@article{bauschke-fixedpoints,
author = {Bauschke, Heinz H. and Moursi, Walaa M. and Singh, Shambhavi and Wang, Xianfu},
title = {On the {B}redies–{C}henchene–{L}orenz–{N}aldi Algorithm: Linear Relations and Strong Convergence},
journal = {SIAM Journal on Optimization},
volume = {35},
number = {3},
pages = {1963-1992},
year = {2025},
doi = {10.1137/23M1587919},
URL = {https://doi.org/10.1137/23M1587919},
eprint = {https://doi.org/10.1137/23M1587919},
abstract = { Abstract. Monotone inclusion problems occur in many areas of optimization and variational analysis. Splitting methods, which utilize resolvents or proximal mappings of the underlying operators, are often applied to solve these problems. Bredies et al. [SIAM J. Optim., 32 (2022), pp. 2376–2401] introduced a new elegant algorithmic framework that encompasses various well-known algorithms, including Douglas–Rachford and Chambolle–Pock. They obtained powerful weak and strong convergence results, where the latter type relies on additional strong monotonicity assumptions. In this paper, we complement the analysis by Bredies et al. by relating the projections of the fixed point sets of the underlying operators that generate the (reduced and original) preconditioned proximal point sequences. We obtain a new strong convergence result when the underlying operator is a linear relation. We note that without assumptions such as linearity or strong monotonicity, one may encounter weak convergence without strong convergence. In the case of the Chambolle–Pock algorithm, we obtain a new result that yields strong convergence to the projection onto the intersection of a linear subspace and the preimage of a linear subspace. Splitting algorithms by Ryu and by Malitsky and Tam are also considered. Various examples are provided to illustrate the applicability of our results. }
}

@article{minty,
  author={George J. Minty},
  title={Monotone (nonlinear) operators in {H}ilbert space},
  journal={Duke Mathematical Journal},
  year={1962},
  volume={29},
  number={3},
  pages={341-346},
  doi={10.1215/S0012-7094-62-02933-2}
}

@article{LM79,
  author={Lions, Pierre-Louis and Mercier, Bertrand},
  title={Splitting algorithms for the sum of two nonlinear operators},
  journal={SIAM Journal on Numerical Analysis},
  year={1979},
  volume={16},
  number={6},
  pages={964--979},
  doi={10.1137/0716071}
}

@article{ryu20,
  author={Ryu, Ernest K.},
  title={Uniqueness of {DRS} as the 2 operator resolvent-splitting and impossibility of 3 operator resolvent-splitting},
  journal={Mathematical Programming},
  year={2020},
  volume={182},
  number={1},
  pages={233--273},
  doi={10.1007/s10107-019-01403-1}
}

@book{bauschke,
  author = {Bauschke, Heinz H. and Combettes, Patrick L.},
  title = {Convex Analysis and Monotone Operator Theory in Hilbert Spaces},
  year = {2017},
  series = {CMS Books in Mathematics},
  edition = {2nd},
  publisher = {Springer Cham},
  doi = {10.1007/978-3-319-48311-5}
}

@book{deutsch,
  title={Best Approximation in Inner Product Spaces},
  author={Deutsch, F.},
  isbn={9780387951560},
  lccn={00047092},
  series={CMS Books in Mathematics},
  url={https://books.google.es/books?id=XFoWaDq6_mYC},
  year={2001},
  publisher={Springer}
}

@book{groetsch,
  title={Generalized Inverses of Linear Operators: Representation and Approximation},
  author={Charles W. Groetsch},
  year={1977},
  url={https://api.semanticscholar.org/CorpusID:117708481},
  publisher = {Marcel Dekker},
  series = {Pure and applied mathematics},
  number = {37}
}

@book{greub,
  author = {Greub, Werner},
  title = {Multilinear Algebra},
  year = {1978},
  series = {Universitext},
  edition = {2nd},
  publisher = {Springer New York},
  doi = {10.1007/978-1-4613-9425-9}
}

@book{Garcia2023,
  author    = {Stephan Ramon Garcia and Roger A. Horn},
  publisher = {Cambridge University Press},
  title     = {Matrix Mathematics},
  year      = {2023},
  doi       = {10.1017/9781108938426},
  subtitle  = {A Second Course in Linear Algebra},
}

\end{document}